\numberwithin{equation}{section}
\newtheorem{Theorem}{Theorem}[section]
\newtheorem{Corollary}[Theorem]{Corollary}
\newtheorem{Lemma}[Theorem]{Lemma}
\newtheorem{Proposition}[Theorem]{Proposition}
 { \theoremstyle{definition}
\newtheorem{Definition}[Theorem]{Definition}
\newtheorem{Example}[Theorem]{Example}
\newtheorem{Remark}[Theorem]{Remark} }
\begin{document}

\newcommand{\arXivNumber}{2010.01060}

\renewcommand{\thefootnote}{}

\renewcommand{\PaperNumber}{005}

\FirstPageHeading

\ShortArticleName{Quantum Groups for Restricted SOS Models}

\ArticleName{Quantum Groups for Restricted SOS Models\footnote{This paper is a~contribution to the Special Issue on Representation Theory and Integrable Systems in honor of Vitaly Tarasov on the 60th birthday and Alexander Varchenko on the 70th birthday. The full collection is available at \href{https://www.emis.de/journals/SIGMA/Tarasov-Varchenko.html}{https://www.emis.de/journals/SIGMA/Tarasov-Varchenko.html}}}

\Author{Giovanni FELDER~$^\dag$ and Muze REN~$^\ddag$}

\AuthorNameForHeading{G.~Felder and M.~Ren}

\Address{$^\dag$~Department of Mathematics, ETH Zurich, 8092 Zurich, Switzerland}
\EmailD{\href{mailto:felder@math.ethz.ch}{felder@math.ethz.ch}}

\Address{$^\ddag$~Department of Mathematics, University of Geneva,\\
\hphantom{$^\ddag$}~2-4 rue du Li\`evre, c.p.~64, 1211 Geneva 4, Switzerland}
\EmailD{\href{mailto:muze.ren@unige.ch}{muze.ren@unige.ch}}

\ArticleDates{Received October 05, 2020, in final form January 05, 2021; Published online January 12, 2021}

\Abstract{We introduce the notion of restricted dynamical quantum groups through their category of representations, which are monoidal categories with a forgetful functor to the category of $\pi$-graded vector spaces for a groupoid~$\pi$.}

\Keywords{elliptic quantum groups; dynamical $R$-matrices; groupoid grading; RSOS models}

\Classification{17B37; 18M15}

\begin{flushright}
\begin{minipage}{90mm}
\it Dedicated to Vitaly Tarasov and Alexander Varchenko\\ on their round birthdays
\end{minipage}
\end{flushright}

\renewcommand{\thefootnote}{\arabic{footnote}}
\setcounter{footnote}{0}

\section{Introduction}\label{s-1}
The theory of quantum groups was designed in the 1980s to describe the
algebraic structure underlying the theory of exactly solvable models of
statistical mechanics and quantum mechanics. Since then the theory of quantum groups has entered diverse fields of
mathematics and mathematical physics and the world of exactly solvable
models is entirely explained by quantum groups, in the guise of
Yangians, quantum loop algebras, and elliptic quantum groups. Well,
not entirely~\dots\ One small village of indomitable models, called the
RSOS models still holds out against the invaders.\footnote{Actually there are also other exactly solvable models
 whose quantum group description is unknown, such as the Inozemtsev spin chain \cite{Inozemtsev1990}.}
These Restricted Solid-On-Solid models, introduced in special cases by
Baxter in his studies of the eight-vertex model and the hard hexagon
model, and generalized by Andrews, Baxter and Forrester, are lattice
models of two-dimensional statistical mechanics for which the
technology of exact solutions has provided some of the most
spectacular results. They play a central role also in conformal field
theory (CFT) as their critical behaviour is (or is conjectured to be)
given by the universality classes of minimal unitary CFT models. While
the unrestricted SOS models, whose local degrees of freedom take
values in an infinite set, are by now well-described by the
representation theory of dynamical elliptic quantum groups, the RSOS
models, with finitely many allowed states at every lattice point, are
much less understood.

We propose a theory of dynamical quantum groups with discrete
dynamical parameter with the goal to establish the representation
theory underlying RSOS models and their higher rank generalizations.
We introduce a new approach to this problem, based on groupoid-graded
vector spaces, which may be of independent interest and applicability
in representation theory.
\subsection{Quantum groups and exactly solvable models} The notion of
quantum group \cite{Drinfeld1987} emerged in the Leningrad school in
the 1980s as the algebraic structure underlying exactly solvable
models of statistical mechanics in 2 dimensions and integrable quantum
field theory in $1+1$ dimensions, see~\cite{Faddeev1982}. While
``groups'' may be a misnomer for these Hopf algebras, quantum groups
share with groups the fact that they have an interesting
representation theory for which tensor products of representations are
defined. Excellent textbooks on quantum groups are
\cite{ChariPressleyBook1994, KasselBook1995, LusztigBook1993}. It
soon appeared that quantum groups have a much wider scope of
applications, ranging from low dimensional topology to conformal field
theory, algebraic geometry, gauge theory, representation theory of
affine Lie algebras etc.

Returning to the origin in quantum mechanics and statistical
mechanics, the basic equation is the Yang--Baxter equation, which
appeared in the 1960s in the works of J.B.~McGuire and C.N.~Yang on
one-dimensional many-body quantum systems~\cite{McGuire1964,YangPRL1967} and later in R.~Baxter's work on statistical mechanics~\cite{BaxterAP1972}. In its basic form it is an equation for a
meromorphic function
$z\mapsto R(z)\in\operatorname{End}_{\mathbb C}(V\otimes V)$ of one complex
variable (called the spectral parameter) with values in the linear
endomorphisms of the tensor square of a finite dimensional complex
vector space~$V$. The Yang--Baxter equation is
\[
 R(z-w)^{(12)}R(z)^{(13)}R(w)^{(23)} =
 R(w)^{(23)}R(z)^{(13)}R(z-w)^{(12)}
\]
in $\operatorname{End}(V\otimes V\otimes V)$. Here the superscripts in the
notation indicate the factors on which the endomorphisms act: for
example $R(w)^{(12)}$ means $R(w)\otimes\mathrm{id}$. One also
requires that $R(z)$ is invertible for generic $z$. One of the
simplest non-trivial solutions is the $R$-matrix
$ R(z)=\mathrm{Id}+ z^{-1}P_{VV}$ of McGuire and Yang where $P_{VV}$ is the flip
$v\otimes w\mapsto w\otimes v$. Here $V$ is any finite-dimensional vector space.

As noted by Baxter, solutions of the Yang--Baxter equation give rise
to families of commuting operators on $n$-fold tensor powers
$W=V^{\otimes n}$: fix complex numbers $z_1,\dots, z_n$ and consider
the operator valued function $L(z)\in\operatorname{End}(V\otimes W)$ given
by the product
$R(z-z_n)^{(0n)}\cdots R(z-z_2)^{(02)}
R(z-z_1)^{(01)}$ (we number the factors from $0$ to $n$ where $0$ refers
to the ``auxiliary space''~$V$ in~$V\otimes W$).
Then the (row-to-row) transfer matrices defined by the partial traces
\[
 T(z)=\operatorname{tr}_VL(z)
\]
are commuting endomorphisms of $W=V^{\otimes n}$:
\[
 T(z)T(w)=T(w)T(z),
\]
as a consequence of the Yang--Baxter equation. The relation
to statistical mechanics is that the trace of~$T(z)^m$, say
with all $z_i=0$, when
written out using matrix multiplication, is a sum of products
of matrix entries of~$R(z)$ over all the ways to assigning
a basis vector to pairs of nearest neighbours of an~$n\times m$
lattice with periodic boundary conditions. These assignments
are configurations of local states of a system and the sum is
the partition function of a statistical mechanics model.

The Bethe ansatz, invented by H.~Bethe in 1931 \cite{Bethe1931} in the
case of the Heisenberg spin chain, and further developed by E.~Lieb,
R.~Baxter, B. Sutherland, C.N.~Yang and others in the 1960s, is a
technique to find simultaneous eigenvectors and eigenvalues of the~$T(z)$. The Leningrad school, see \cite{Faddeev1982} for a review of
the results in the early phase, reformulated this technique under the
name ``algebraic Bethe ansatz'' in the framework of the ``quantum inverse scattering
method'' in terms of representation theory of an algebra with
quadratic relations (called $RLL$ or $RTT$
relations) whose coefficients are matrix entries of a solution of the
Yang--Baxter equation. For example the Yangian $Y(\mathfrak{gl}_N)$
corresponds to the McGuire--Yang $R$-matrix with $V=\mathbb C^N$. It can be
defined as the algebra with generators $L_{ij;n}$, $i,j=1,\dots,N$,
$n=1,2,\dots$ with relations
\[
 R(z-w)L(z)\otimes L(w)=L(z)\otimes L(z) R(z-w),\qquad
 R(z)=\mathrm{Id}+\frac 1zP_{VV},
\]
where $L(z)$ is the $N\times N$ matrix with entries
$\delta_{ij}+\sum_{n\geq1}L_{ij,n}z^{-n}$. It is a Hopf algebra
deformation of the universal enveloping algebra of the current Lie
algebra $\mathfrak{gl}_N[t]$ and has a universal $R$-matrix~$\mathcal R$
in a completion of $Y(\mathfrak {gl}_N)\otimes Y(\mathfrak {gl}_N)$
relating the opposite coproduct $\Delta'$ to the coproduct via
$\Delta'(x)=\mathcal R\Delta(x)\mathcal R^{-1}$. Evaluating
$\mathcal R$ in pairs $V_i\otimes V_j$ of finite dimensional
representations of the Yangian yields solutions of the
Yang--Baxter equation, in the generalized form
\begin{equation}\label{e-YB2}
 R_{V_1V_2}^{(12)}R_{V_1V_3}^{(13)}R_{V_2V_3}^{(23)}
 =R_{V_2V_3}^{(23)}R_{V_1V_3}^{(13)}R_{V_1V_2}^{(12)}.
\end{equation}
The spectral parameter may be viewed as a parameter of the
representations $V_i$, and in fact there is an issue of convergence
of the action of $\mathcal R$ on finite dimensional representations,
resulting in the fact that $R_{V_i,V_j}$ is a meromorphic function of
the spectral parameters.

To such a system of $R$-matrices we can associate corresponding
transfer matrices $T_i=\operatorname{tr}_{V_i}R_{V_iV_3}$, $i=1,2$,
acting on $V_3$ and the Yang--Baxter equation with an invertible
$R_{V_1V_2}$ implies that $T_1T_2=T_2T_1$. Baxter's transfer matrices
are the special case $V_{1,2}=V$ and $V_3$ a tensor product of vector
representations with equal spectral parameters.

This story extends to arbitrary semisimple (or reductive) Lie algebras
$\mathfrak g$, and the Yangians~$Y(\mathfrak g)$ provide the algebraic structure
underlying several integrable systems based on rational solutions of
the Yang--Baxter equation such as the Heisenberg spin chain.

The theory admits a trigonometric version, leading to solutions of the
Yang--Baxter equation with trigonometric coefficients. The
corresponding quantum group is a Hopf algebra deformation of the loop
Lie algebra $\mathfrak g\big[t,t^{-1}\big]$. It is (a subquotient of) the
Drinfeld--Jimbo quantum enveloping algebra $U_q\hat {\mathfrak g}$ of the affine
Kac--Moody Lie algebra $\hat {\mathfrak g}$. The corresponding solvable models are
the six-vertex model, a special case of which is the two-dimensional ice model, and
the XXZ spin chain.

\subsection{Elliptic quantum groups and dynamical Yang--Baxter equation}
The next level, after the rational and trigonometric functions, are the
elliptic functions, which are meromorphic functions which are periodic
with respect to two independent periods. Several solvable models have
an elliptic version and the trigonometric and rational versions are
obtained as degenerate limits as the periods tend to infinity.
The relation to quantum groups is more
tricky in the elliptic case. On one hand there is a solution of the
Yang--Baxter equation with elliptic coefficients due to Baxter,
corresponding to the XYZ spin chain and the eight-vertex model, whose
underlying algebraic structure is the Sklyanin algebra (which is not a
Hopf algebra). On the other hand there are the SOS (solid-on-solid) models
also known as IRF (interaction-round-a-face) models. They are
based on a variant of the Yang--Baxter equation, called the
star-triangle relation. While the existence of the
Baxter solution is special for $\mathfrak g=\mathfrak{sl}_N$, we now
know that SOS models exist for all semisimple Lie algebras. Also the
Baxter solution can be related to a solution of the star-triangle relation
by the so-called vertex-IRF transformation, also due to Baxter.

The elliptic quantum groups introduced in \cite{FelderICM1995,
 FelderICMP1995} provide a generalization of the theory of quantum
groups that applies to elliptic SOS models. They are based on a
modification of the Yang--Baxter equation, nowadays called the
dynamical Yang--Baxter equation, see \eqref{e-DYB} below, which had
previously been found by Gervais and Neveu in their study of the
exchange relations of vertex operators in the Liouville conformal
field theory \cite{GervaisNeveu1984}. The dynamical Yang--Baxter
equation reappeared in various contexts since, e.g.,
\cite{AganagicOkounkov2016, CostelloWittenYamazakiI2018,
 CostelloWittenYamazakiII2018,EtingofSchiffmann1999, EtingofVarchenkoCMP1998,JimboKonnoOdakeShiraishi1999,KalmykovSafronov2020,Shibukawa2016,
 StokmanReshetikhin2020}. A recent textbook on elliptic quantum groups
is~\cite{Konno2020}.

The unknown in the dynamical Yang--Baxter equation is a function
$R(z,a)\in \operatorname{End}_{\mathfrak h}(V\otimes V)$ of a second
``dynamical'' variable $a\in\mathfrak h^*$ with values in the dual
vector space to an abelian Lie algebra $\mathfrak h$ and
$V=\oplus_{\mu\in\mathfrak h^*}V_\mu$ is a finite dimensional
semisimple $\mathfrak h$-module.
The (quantum) dynamical Yang--Baxter equation is
\begin{gather}
 R\big(z-w,a+h^{(3)}\big)^{(12)} R(z,a)^{(13)} R\big(w,a+h^{(1)}\big)^{(23)} \notag\\
\qquad =
 R(w,a)^{(23)} R\big(z,a+h^{(2)}\big)^{(13)} R(z - w,a)^{(12)}.\label{e-DYB}
\end{gather}
The ``dynamical shift'' notation is adopted here: for example
$ R(w,a+h^{(3)})^{(12)}$ acts as $R(w,a+\mu_3)\otimes{\mathrm{Id}}$ on
the product of weight subspaces
$V_{\mu_1}\otimes V_{\mu_2}\otimes V_{\mu_3}$. More generally, we can consider
dynamical Yang--Baxter equations in $\operatorname{End}_{\mathfrak h}(V_1\otimes V_2\otimes V_3)$
for $R$-matrices $R_{V_iV_j}(z,a)\in\operatorname{End}_{\mathfrak h}(V_i\otimes V_j)$
as in~\eqref{e-YB2}.

The elliptic quantum group associated with a solution of the dynamical
Yang--Baxter equation and its tensor category of representations can
be again defined by quadratic relations similar to those of the
Yangian but with dynamical shifts at the appropriate places, see
\cite{FelderICM1995,FelderVarchenko1996, GautamToledanoLaredo2017}.
The main new feature is that the representations are vector spaces
over the field of meromorphic functions of the dynamical variables and
the elements of the elliptic quantum group act as difference operators
in these variables. The underlying generalization of the notion of
Hopf algebra was formalized by Etingof and Varchenko
\cite{EtingofVarchenko1998} who called it {\em $\mathfrak h$-Hopf
 algebroid}.

The transfer matrix construction generalizes to the dynamical
setting \cite{FelderVarchenko1996NPB}: suppose that we have invertible operators
$R_{V_iV_j}(z,\lambda)\in\operatorname{End}_{\mathfrak h}(V_i\otimes V_j)$, $i<j\in\{1,2,3\}$
obeying the dynamical Yang--Baxter equation \eqref{e-DYB} on
$V_1\otimes V_2\otimes V_3$, depending meromorphically on $z\in\mathbb C$,
$\lambda\in\mathfrak h^*$. Then the transfer matrix is defined
as an operator acting on meromorphic functions of $\lambda$
with values in the zero-weight subspace of $V_3$:
\begin{equation}\label{e-transfer}
 T_i(z)=\sum_\mu\operatorname{tr}_{V_{i\mu}}R_{V_iV_3}(z,\lambda) t_\mu.
\end{equation}
Here the partial trace is over the weight-$\mu$ subspace of~$V_i$,
the $R$-matrix acts as a multiplication operator and $(t_\mu f)(\lambda)=f(\lambda+\mu)$.
In the special case where $V_1=V_2$, $R_{V_1,V_3}$ has the interpretation of an
$L$-operator obeying a dynamical version of the $RLL$-relations.

\subsection{Restricted SOS models}
The restricted solid-on-solid (RSOS) models introduced by Andrews,
Baxter and Forrester \cite{AndrewsBaxterForrester1984}, generalizing
models previously considered by Baxter \cite{BaxterAP1972,
 BaxterBook1982} in his study of the eight-vertex model and of the
hard hexagon model, form a general class of models of statistical
mechanics in two dimensions. A configuration of a solid-on-solid model
on a subset $M$ of a square lattice in the plane or 2-dimensional torus is
described by assigning an integer $l_i$ (height) to each lattice site
$i\in M$, with the restriction that $|l_i-l_j|=1$ for
neighbouring sites~$i$,~$j$. We can think of the graph of $i\mapsto l_i$
as a discrete random surface modeling the interface between a solid and a gas.

The probability of a configuration $(l_i)$ is proportional to
a product over the faces (unit squares with vertices in $M$)
of Boltzmann weights ${\mathcal W}(l_i,l_j,l_k,l_m)$ depending
on the heights on the corners~$i$, $j$, $k$, $l$ of the face.
In the ``solvable'' SOS models the Boltzmann weights are part of
a one-parameter family ${\mathcal W}(z;a,b,c,d)$ obeying the star-triangle
relation
\begin{gather*}
\sum_g{\mathcal W}(z-w;f,g,d,e)
{\mathcal W}(z;a,b,g,f)
{\mathcal W}(w;b,c,d,g)
 \\
\qquad{} = \sum_g
 {\mathcal W}(w;a,g,e,f)
 {\mathcal W}(z;c,d,e,g)
 {\mathcal W}(z-w;a,b,c,g),
\end{gather*}
which is best understood graphically:
\[
\begin{tikzpicture}[scale=1]
 \draw (0,0) node[left]{$\displaystyle{\sum_g}\quad a$}
 -- (.5,-.866);
 \draw (0,0) -- (.5, .866);
 \draw (.5, .866) node[above]{$f$} -- (1,0);
 \draw (.5,-.866) node[below]{$b$}-- (1,0);
 \draw (.5, .866) -- (1.5, .866);
 \draw (.5,-.866) -- (1.5, -.866);
 \draw (1,0) node[left]{$g$}-- (2,0);
 \draw (1.5, .866) node[above]{$e$} -- (2,0);
 \draw (1.5, -.866) node[below]{$c$} -- (2,0) node[right]{$d$};
\end{tikzpicture}
\begin{tikzpicture}[scale=1]
 \draw (0,0) node[left]{$ =\quad\displaystyle{\sum_g}\quad a$}
 -- (.5,-.866);
 \draw (0,0) -- (.5, .866);
 \draw (1,0) node[right]{$g$} -- (1.5, .866);
 \draw (1.5,-.866) node[below]{$c$} -- (1,0);
 \draw (.5, .866) node[above]{$f$} -- (1.5, .866);
 \draw (.5,-.866) node[below]{$b$} -- (1.5, -.866);
 \draw (0,0) -- (1,0);
 \draw (1.5, .866) node[above]{$e$} -- (2,0);
 \draw (1.5, -.866) node[below]{$c$} -- (2,0) node[right]{$d$};
\end{tikzpicture}
\]
The star-triangle equation admits interesting families of solutions in
terms of elliptic theta functions. Andrews, Baxter and Forrester
considered a special limit of parameters so that the equation holds
for the heights in a finite interval
\[
 l_i\in\{1,2,\dots,r-1\}.
\]
For these families of models (depending essentially on an elliptic
curve and a point of order $r$ on it) they were able to compute
several quantities in the thermodynamic limit $M\to\mathbb Z^2$
(under some physically motivated assumptions on the asymptotic
behaviour), including the probability distribution of the height at
the origin as a function of the boundary conditions in the ordered
phase. One interesting mathematical outcome of this calculation is
that it involves for $r=5$ (Baxter's hard hexagon model) the
celebrated Rogers--Ramanujan identities, which get generalized to
arbitrary $r$. From the point of view of statistical mechanics and
conformal field theory, these models are interesting since their
scaling limit at the critical point are conjectured \cite{Huse1984} to
be the unitary $A$-series of minimal models of
Belavin--Polyakov--Zamolodchikov and Friedan--Qiu--Shenker.

We will be mostly concerned with a generalization of the RSOS models
in which the heights take values in the weight lattice of a simple Lie
algebra, see \cite{DateJimboMiwaOkado1986, JimboKunibaMiwaOkado1988,
 JimboMiwaOkado1988}. The main difference is that in general the
Boltzmann weights ${\mathcal W}(a,b,c,d)$ are no longer scalar-valued, but must
be understood as linear operators.

The relation with the dynamical quantum groups comes from
the simple observation that
the above star-triangle equation is essentially a rewriting of the
dynamical Yang--Baxter equation. The row-to-row transfer matrix
of the RSOS model is the transfer matrix \eqref{e-transfer} for
suitable representations of the elliptic quantum group associated
with $\mathfrak{gl}_2$, acting on functions with support on a finite set.

This restriction of a difference operator such as \eqref{e-transfer}
with meromorphic coefficients to a~finite set is rather subtle as one
needs to avoid the poles and check that the support condition is
preserved. This was done in the case of the RSOS model in~\cite{FelderVarchenko1999}, where it was also shown that the
$\mathfrak{gl}_2$-elliptic weight functions of
\cite{FelderTarasovVarchenkoAMS1997} obey ``resonance conditions'',
guaranteeing that their restrictions to suitable discrete or finite
subsets of the values of the dynamical variable provide, via the Bethe
ansatz, well-defined eigenvectors of the row-to-row transfer matrix of
the RSOS model.

\subsection{Categories of representations}
Instead of talking of quantum groups it is more convenient to talk
about their tensor category of representations, and we take this
approach in this paper. In the representation theory of elliptic
quantum groups \cite{EtingofLatour2005, FelderVarchenko1996}, the
representation space of a representation is defined as a~graded vector
space over the field of meromorphic functions of the dynamical
variables, where the grading is by weights of the underlying Lie
algebra. The representation structure is defined by {\em
 $\mathbb C$-linear} endomorphisms obeying quadratic relations and
commutations relations with scalar multiplication by meromorphic
functions.

For the application to RSOS models, where the dynamical variables take
values in a discrete set, the approach with meromorphic functions is
not suitable. In this paper we propose that the vector spaces
underlying representations of quantum groups with discrete dynamical
variables should be groupoid-graded vector spaces. More precisely we
propose that representations of such quantum groups are monoidal
categories equipped with a faithful monoidal functor to the category
of $\pi$-graded vector spaces of finite type for a certain groupoid~$\pi$ (see Section~\ref{s-2} for the definitions). For applications to
generalized RSOS models the groupoids are certain subgroupoids of the
transformation groupoid for the translation action of the weight
lattice of a semisimple Lie algebra. It turns out that in this
approach the various shifts of dynamical variables appearing in the
dynamical context appear naturally and one can immediately apply the
standard technology of the quantum inverse scattering method
($R$-matrices, $RLL$ relations, transfer matrices, Bethe ansatz). An
instance of this is the fusion procedure, which consists in
constructing solutions of the Yang--Baxter or star-triangle relations
from known ones by taking subquotients of tensor products.

An interesting new feature in the groupoid-graded case is that the
Grothendieck ring of the category of $\pi$-graded vector spaces is
non-commutative in general, even in the case of action groupoids of
abelian groups. Thus characters of representations of dynamical
quantum groups live in a non-commutative ring. However if a collection
of representations $(V_i)$ admit $R$-matrices, which are isomorphisms
$V_i\otimes V_j\cong V_j\otimes V_i$, then their characters generate
a commutative subring of the Grothendieck ring of $\pi$-graded vector
spaces. In the case of transformation groupoids these rings are
realized as rings of commuting difference operators.

\subsection{Outline of the paper}
We introduce the category $\mathrm{Vect}_k(\pi)$ of $\pi$-graded
vector spaces of finite type over a field $k$ in Section \ref{s-2}.
It is a variant of a special case of the category of $\pi$-graded modules
considered in~\cite{Lundstroem2004}. It is an abelian monoidal
category with duality. We discuss the notion of character of a
$\pi$-graded vector space taking values in the convolution ring of
$\pi$. In Section \ref{s-3} we adapt the machinery of Yang--Baxter
equations and transfer matrices to the case of $\pi$-graded vector spaces
and explain the relation with the star-triangle relation. We introduce the
notion of partial traces in this context and prove that solutions of
the Yang--Baxter equation give rise to commuting transfer matrices. In
the case of transformation groupoids and their subgroupoids, we show
that the Yang--Baxter equation can be written as a dynamical
Yang--Baxter equation, and that transfer matrices produce commuting
difference operators. In Section~\ref{s-4} we consider in more detail
the example of the elliptic quantum group of type $A_{n-1}$, which
admits a dynamical $R$-matrix with restricted dynamical variables and
thus a monoidal category with a forgetful functor to $\pi$-graded
vector spaces for a finite groupoid $\pi$. We compute a few
characters, in particular the characters of (analogues of the)
exterior powers of the vector representation, obtained by the fusion
procedure. Finally in Section~\ref{s-5} we consider the case of
dynamical $R$-matrices arising from quantum groups at root of unity,
which may be viewed as a toy model for restricted models, with
$R$-matrices that are independent of the spectral parameters. The construction
uses a semisimple rigid braided category~$C_q(\mathfrak g)$ of representations
of quantum groups for each simple
Lie algebra~$\mathfrak g$ and root of unity~$q$.
Technically it is a semisimple quotient of the
category of tilting modules of the Lusztig quantum groups.
It has finitely many isomorphism classes of simple objects.
We construct a~faithful monoidal functor from $C_q(\mathfrak g)$ to the
categories of $\pi$-graded vector spaces of finite type for a suitable
finite groupoid $\pi$. The braiding in $C_q(\mathfrak g)$ is then mapped
to a system of dynamical $R$ matrices with dynamical variable restricted to
a finite set. This construction is a formalization of the ``passage to
the shadow world'' of \cite{KirillovReshetikhin1988,Turaev1994} and is a
version with discrete dynamical variable of \cite{EtingofVarchenko1999}.
The characters of simple modules define a representation of the Verlinde
algebra by difference operators.

\section{Grading by groupoids}\label{s-2}
\subsection{Groupoids}
A groupoid $\pi$ on a set $A$ is a small category with set of objects $A$
whose morphisms, called arrows, are invertible. The set of morphisms
from an object $a$ to an object~$b$ is denoted by~$\pi(a,b)$. The
composition of arrows $\gamma\in\pi(a,b)$ and $\eta\in\pi(b,c)$ is
denoted by $\eta\circ\gamma\in\pi(a,c)$ or by $\eta\gamma$ in case of
typographical constraints. The inverse of $\gamma\in\pi(a,b)$ is
$\gamma^{-1}\in\pi(b,a)$. We identify $A$ with the subset of identity
arrows and denote a groupoid by its set of arrows $\pi$ when no
confusion arises. The maps $s,t\colon\pi\to A$ sending
$\gamma\in\pi(a,b)$ to $a$ and $b$, respectively, are called source
and target map, respectively.

A subgroupoid of a groupoid $\pi$ is a subset of (the set of arrows
of) $\pi$ that is closed under composition and inversion. It is a
groupoid on the set of its identity arrows. The {\em full
 subgroupoid} of $\pi$ on a subset $B\subset A$ is
the subgroupoid $s^{-1}B\cap t^{-1}B$ of arrows between objects of $B$.

\subsection{The convolution ring of a groupoid}
To a groupoid $\pi$ we associate the {\em convolution ring of $\pi$},
which is a unital associative ring $\mathbb Z(\pi)$ with an involutive anti-automorphism.

As an abelian group $\mathbb Z(\pi)$
consists of the maps $n\colon \pi\to \mathbb Z$ such that for all
$a\in A$, the set of arrows $\alpha\in s^{-1}(a)\cup t^{-1}(a)$ with
$n(\alpha)\neq 0$ is finite. The product is the convolution product
\[
 n*m\colon \ \gamma\mapsto \sum_{\beta\circ\alpha=\gamma}
 n(\alpha)m(\beta).
\]
The sum has finitely many non-zero terms because of the finiteness
assumption. The unit is the characteristic function on identity arrows
and the involutive anti-automorphism $\sigma$ sends $n$ to
$\sigma(n)\colon\gamma\to n\big(\gamma^{-1}\big)$. The assignment
$\pi\mapsto \mathbb Z(\pi)$ is a contravariant functor from the
category of groupoids to the category of unital involutive associative
rings.
\begin{Remark}\label{r-1} For any commutative ring $R$ we have an $R$-algebra
 $R(\pi)=R\otimes_{\mathbb{Z}}\mathbb Z(\pi)$ obtained by extension of
 scalars. For transfer matrices we will need a more general
 construction where~$R$ is also $\pi$-graded, see Section~\ref{ss-cac} below.
\end{Remark}
\subsection{Convolution rings of subgroupoids}
It will be convenient to view the convolution ring of a subgroupoid
$\pi'\subset \pi$ as a subring of $\mathbb Z(\pi)$.
\begin{Lemma}
 The characteristic functions $\chi_{A'}$ of subsets $A'\subset A$ of
 the set of identity arrows are idempotents in $\mathbb Z(\pi)$.
\end{Lemma}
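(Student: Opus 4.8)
The plan is to verify the defining idempotent identity $\chi_{A'}*\chi_{A'}=\chi_{A'}$ directly from the convolution formula, by analyzing which pairs $(\alpha,\beta)$ of arrows with $\beta\circ\alpha=\gamma$ can contribute a nonzero term. First I would check that $\chi_{A'}$ genuinely lies in $\mathbb Z(\pi)$: for any object $a\in A$, the only arrow in $s^{-1}(a)\cup t^{-1}(a)$ on which $\chi_{A'}$ is nonzero is the identity arrow at $a$ (and only when $a\in A'$), so the finiteness condition is trivially met.

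Next I would evaluate the convolution at an arbitrary arrow $\gamma\in\pi(a,b)$,
\[
 (\chi_{A'}*\chi_{A'})(\gamma)=\sum_{\beta\circ\alpha=\gamma}\chi_{A'}(\alpha)\,\chi_{A'}(\beta).
\]
A summand is nonzero only when both $\alpha$ and $\beta$ are identity arrows indexed by elements of $A'$, say $\alpha=\mathrm{id}_c$ and $\beta=\mathrm{id}_d$ with $c,d\in A'$. Since the composite $\mathrm{id}_d\circ\mathrm{id}_c$ is defined precisely when the target of $\alpha$ equals the source of $\beta$, i.e.\ when $c=d$, and then equals $\mathrm{id}_c$, the only way to write $\gamma$ as a composite of two identity arrows is $\gamma=\mathrm{id}_c$ with $\alpha=\beta=\mathrm{id}_c$. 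Hence the sum is empty unless $\gamma$ is itself an identity arrow, in which case it reduces to the single term $\chi_{A'}(\mathrm{id}_c)^2$.

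Finally, distinguishing the three cases: when $\gamma=\mathrm{id}_c$ with $c\in A'$ the sum equals $1=\chi_{A'}(\gamma)$; when $\gamma=\mathrm{id}_c$ with $c\notin A'$ it equals $0=\chi_{A'}(\gamma)$; and when $\gamma$ is not an identity arrow it equals $0=\chi_{A'}(\gamma)$. In every case $(\chi_{A'}*\chi_{A'})(\gamma)=\chi_{A'}(\gamma)$, which establishes idempotency. There is no serious obstacle here: the only point requiring attention is the observation that a product of two identity arrows is defined exactly when their two objects coincide, and it is this that collapses the convolution sum to a single term supported on identity arrows.
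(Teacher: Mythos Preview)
Your argument is correct and follows essentially the same route as the paper's own proof: both unwind the convolution sum and observe that the only contributing pair $(\alpha,\beta)$ consists of equal identity arrows at a point of $A'$, reducing $(\chi_{A'}*\chi_{A'})(\gamma)$ to $\chi_{A'}(\gamma)^2=\chi_{A'}(\gamma)$. Your version is slightly more detailed (you also verify the finiteness condition so that $\chi_{A'}\in\mathbb Z(\pi)$), but the idea is identical.
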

\begin{proof}
 By definition $\chi_{A'}(\gamma)=0$ unless $\gamma$ is an identity
 arrow $a\in\pi(a,a)$ for $a\in A'$. In
 this case $\chi_{A'}(a)=1$. Thus $\chi_{A'}*\chi_{A'}(\gamma)$
 vanishes unless $\gamma$ is an identity arrow $a\in A'$, in which
 case $\chi_{A'}*\chi_{A'}(a)=\chi_{A'}(a)\chi_{A'}(a)=1$.
\end{proof}
\begin{Lemma}\label{l-subgr} Let $\pi$ be a groupoid on $A$ and $\pi'$
 the full subgroupoid on $A'\subset A$. Then the
 induced morphism $\mathbb Z(\pi)\to \mathbb Z(\pi')$ restricts
 to a unital ring isomorphism
 \[
 \chi_{A'}*\mathbb Z(\pi)*\chi_{A'}\to \mathbb Z(\pi'),
 \]
 where $\chi_{A'}$ is the unit element of the subring
 $\chi_{A'}*\mathbb Z(\pi)*\chi_{A'}$. Moreover the left-hand side
 is the subring of functions vanishing on the complement of $\pi'$.
\end{Lemma}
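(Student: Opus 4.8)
The plan is to make the statement completely explicit in terms of the convolution product and then reduce it to one direct computation plus a bijectivity check, so that the abstract ``induced morphism'' becomes concrete restriction of functions. First I would pin down the corner $\chi_{A'}*\mathbb Z(\pi)*\chi_{A'}$. Using that $\chi_{A'}$ is idempotent (the preceding Lemma), I would compute $\chi_{A'}*n$ for arbitrary $n\in\mathbb Z(\pi)$: since $\chi_{A'}(\alpha)\neq 0$ only when $\alpha$ is an identity arrow $\mathrm{id}_a$ with $a\in A'$, the sole surviving factorization $\beta\circ\alpha=\gamma$ is $\alpha=\mathrm{id}_{s(\gamma)}$, $\beta=\gamma$, giving $(\chi_{A'}*n)(\gamma)=n(\gamma)$ if $s(\gamma)\in A'$ and $0$ otherwise. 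Multiplying on the right by $\chi_{A'}$ projects likewise onto arrows with target in $A'$, so that
\[
(\chi_{A'}*n*\chi_{A'})(\gamma)=\begin{cases} n(\gamma), & s(\gamma),t(\gamma)\in A',\ \text{i.e.}\ \gamma\in\pi',\\ 0,&\text{otherwise}.\end{cases}
\]
This single formula yields the \emph{moreover} claim (the corner is exactly the functions vanishing off $\pi'$), shows the corner is closed under $*$ with $\chi_{A'}$ acting as a two-sided unit, and identifies the induced morphism $\mathbb Z(\pi)\to\mathbb Z(\pi')$---restriction of functions to $\pi'$---with the evident extension-by-zero identification on the corner.

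Next I would construct the inverse and settle unitality. Let $\Phi$ be restriction to $\pi'$ on the corner and $E\colon\mathbb Z(\pi')\to\mathbb Z(\pi)$ extension by zero. I would check $E$ is well defined: for $a\notin A'$ the function $E(n')$ vanishes on $s^{-1}(a)\cup t^{-1}(a)$, while for $a\in A'$ the finiteness condition is inherited from $\mathbb Z(\pi')$, and $E(n')$ is supported on $\pi'$, so it lands in the corner. By the displayed formula $\Phi\circ E=\mathrm{id}$ and $E\circ\Phi=\mathrm{id}$, so $\Phi$ is a bijection of abelian groups. Unitality is then immediate, since $\Phi(\chi_{A'})$ is the characteristic function of the identity arrows at objects of $A'$, which is precisely the unit of $\mathbb Z(\pi')$.

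The one place requiring genuine care---and the main obstacle---is multiplicativity of $\Phi$, which is where \emph{fullness} of $\pi'$ is used. For $p,q$ in the corner and $\gamma\in\pi'$ with $s(\gamma)=a$, $t(\gamma)=c$ (so $a,c\in A'$), I would expand $(p*q)(\gamma)=\sum_{\beta\circ\alpha=\gamma}p(\alpha)q(\beta)$, where a priori the intermediate object $b=t(\alpha)=s(\beta)$ runs over all of $A$. The key observation is that $p,q$ are supported on $\pi'$, so $p(\alpha)\neq0$ forces $\alpha\in\pi'$ and hence $b\in A'$, and similarly for $q(\beta)$; and because $\pi'$ is the \emph{full} subgroupoid on $A'$, every such factorization through $b\in A'$ is automatically a factorization inside $\pi'$. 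Thus the sum is unchanged upon restricting $b$ to $A'$ and the arrows to $\pi'$, which is exactly $(\Phi(p)*\Phi(q))(\gamma)$. This is precisely the step that fails for general elements of $\mathbb Z(\pi)$, where intermediate objects may leave $A'$; passing to the corner is what confines them, and fullness guarantees no arrows between $A'$-objects are lost. Combining the three paragraphs, $\Phi$ is a unital ring isomorphism with inverse $E$, which proves the Lemma.
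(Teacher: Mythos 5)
Your proof is correct and follows essentially the same route as the paper: identify the corner $\chi_{A'}*\mathbb Z(\pi)*\chi_{A'}$ with the functions vanishing off $\pi'$, and show that restriction to $\pi'$ and extension by zero are mutually inverse unital ring maps. The only difference is one of detail: you spell out the multiplicativity of the restriction on the corner (where fullness of $\pi'$ enters), a step the paper leaves implicit.
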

\begin{proof}
 The map $\mathbb Z(\pi)\to \mathbb Z(\pi')$ is the restriction map
 $r\colon n\mapsto n|_{\pi'}$. The extension by zero
 $\mathbb Z(\pi')\to\mathbb Z(\pi)$ is a right inverse. Its image
 consists of the functions vanishing outside $\pi'$. Thus $r$
 restricts to an isomorphism from the functions vanishing outside
 $\pi'$ and $\mathbb Z(\pi)$. Now a function $n\in\mathbb Z(\pi)$
 vanishes outside the full subgroupoid $\pi'$ if and only it
 vanishes everywhere except on arrows between elements of $A'$. But this is
 equivalent to $n=\chi_{A'}*n*\chi_{A'}$. Since $\chi_{A'}$ is
 an idempotent, $\chi_{A'}*\mathbb Z(\pi)*\chi_{A'}$ is a subring
 with unit element $\chi_{A'}$, which is sent to $1\in\mathbb Z(\pi')$.
\end{proof}
\subsection{Action groupoids} The main examples of groupoids for our
purpose are action groupoids and their subgroupoids. Let $G$ be a
group with identity element $e$ and $A$ be a set with a right action
$A\times G\to A$. The action groupoid $A\rtimes G$ has set of objects $A$ and
an arrow $a\to a'$ for each $g\in G$ such that $a'=ag$. Thus an arrow
is described by a pair $(a,g)\in A\times G$. The source and target are
$s(a,g)=a$, $t(a,g)=ag$ and the composition is
\[
 (a',g')\circ(a,g)=(a,gg'),\qquad \text{whenever} \quad a'=ag.
\]
The identity arrows are $(a,e)$, $a\in A$ and the inverse of $(a,g)$
is $\big(ag,g^{-1}\big)$.

The convolution ring $\mathbb Z(A\rtimes G)$ contains the subring
$\mathbb Z^A$ of functions with support on the identity arrows
as in the general case and a subring $\mathbb ZG$ isomorphic
to the group ring of $G$ via the injective ring homomorphism
$t\colon\mathbb ZG\to \mathbb Z(A\rtimes G)$ sending $g\in G$ to
\[
 t_g\colon \ (a,h)\mapsto \delta_{g,h}.
\]
The right action of $G$ defines a group homomorphism
$r\colon G\to \operatorname{Aut}(\mathbb Z^A)$: for $g\in G$ and
$f\in\mathbb Z^A$, $r_gf(a)=f(ag)$.
\begin{Proposition}\label{p-difference}
 The convolution ring $\mathbb Z(A\rtimes G)$ is the crossed
 product $\mathbb Z^A\rtimes_{r} \mathbb ZG$ of its sub\-rings~$\mathbb Z^A$ and $\mathbb ZG$. The involution acts trivially
 on $\mathbb Z^A$ and as $t_g\mapsto t_{g^{-1}}$ on $\mathbb ZG$.
\end{Proposition}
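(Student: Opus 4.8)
The plan is to recognize the crossed-product structure directly inside the convolution ring, by exhibiting its two subrings, the commutation relation between them, and a unique decomposition of every element as a left $\mathbb Z^A$-combination of the group-like elements $t_g$. Everything will reduce to evaluating the convolution product on a single arrow $(a,g)$, using that a factorization $\beta\circ\alpha=(a,g)$ forces $\alpha=(a,g_1)$ and $\beta=(ag_1,g_2)$ with $g_1g_2=g$, so that
\[
(n*m)(a,g)=\sum_{g_1g_2=g}n(a,g_1)\,m(ag_1,g_2).
\]

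First I would use this formula with $n=t_g$, $m=t_h$ to get $t_g*t_h=t_{gh}$, confirming that $t\colon\mathbb ZG\to\mathbb Z(A\rtimes G)$ is an injective unital ring homomorphism and hence identifies $\mathbb ZG$ as a subring; the subring $\mathbb Z^A$ of functions supported on identity arrows is given. The decisive step is the commutation relation: for $f\in\mathbb Z^A$ the same formula gives
\[
(t_g*f)(a,h)=\delta_{h,g}\,f(ag),\qquad (f*t_g)(a,h)=\delta_{h,g}\,f(a),
\]
and since $r_gf(a)=f(ag)$ these read $t_g*f=(r_gf)*t_g$, which is exactly the normalization relation defining $\mathbb Z^A\rtimes_r\mathbb ZG$ for the given action $r$.

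To identify the whole ring I would then write an arbitrary $n$ as $n=\sum_g f_g*t_g$ with $f_g\in\mathbb Z^A$ defined by $f_g(a)=n(a,g)$: evaluating $(f_g*t_g)(a,h)=\delta_{h,g}f_g(a)$ and summing over $g$ recovers $n$, and reading off the value at $(a,g)$ shows the $f_g$ are uniquely determined. Together with $t_g*f=(r_gf)*t_g$ and $t_g*t_h=t_{gh}$, this realizes $\mathbb Z(A\rtimes G)$ as the crossed product $\mathbb Z^A\rtimes_r\mathbb ZG$. The involution is then immediate: from $(a,e)^{-1}=(a,e)$ one gets $\sigma(f)=f$ for $f\in\mathbb Z^A$, and from $(a,h)^{-1}=(ah,h^{-1})$ one gets $\sigma(t_g)(a,h)=\delta_{g,h^{-1}}=t_{g^{-1}}(a,h)$, i.e.\ $\sigma(t_g)=t_{g^{-1}}$.

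The hard part will not be any of these one-line evaluations but the bookkeeping of finiteness. The defining condition on $\mathbb Z(A\rtimes G)$ does not force finitely many $f_g$ to be nonzero; it only forces the family $(f_g)$ to be locally finite in the sense dictated by the source and target fibers at each object. Thus the freeness of $\mathbb Z(A\rtimes G)$ over $\mathbb Z^A$ on the $t_g$, and hence the identification with the crossed product, must be phrased in this locally finite sense rather than as a finitely supported direct sum; making this precise is the only step that is not purely formal.
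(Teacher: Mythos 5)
Your proposal is correct and matches the paper's own (essentially proof-free) treatment: the paper simply asserts the presentation $t_gt_h=t_{gh}$, $t_gf=r_g(f)t_g$ and the explicit correspondence $n\leftrightarrow\sum_g n_gt_g$ with $n_g(a)=n(a,g)$, which is exactly the decomposition you verify. Your closing caveat about finiteness is a genuine point the paper glosses over: the defining condition on $\mathbb Z(A\rtimes G)$ only makes the family $(n_g)$ locally finite (e.g.\ for $A=G=\mathbb Z$ acting by translation, $n(a,g)=\delta_{a,g}$ has infinitely many nonzero $n_g$), so the statement is to be read as identifying $\mathbb Z(A\rtimes G)$ with the locally finite completion of the algebraic crossed product rather than with finite sums $\sum f_g t_g$.
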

This means that $\mathbb Z(A\rtimes G)$ is isomorphic to the algebra
generated by $\mathbb Z^A$ and elements $t_g$ for $g\in G$ with
relations
\[
 t_gt_h=t_{gh},\qquad t_gf=r_g(f)t_g,\qquad g,h\in G, \quad f\in \mathbb Z^A.
\]
Explicitly, a function $n\in\mathbb Z(A\rtimes G)$ corresponds to the element
$\sum_{g\in G}n_gt_g$ where $n_g(a)=n(a,g)$.
\begin{Remark} In particular the convolution ring acts on the space of
 functions on $A$ by difference operators (i.e., operators acting on
 functions by translations of the argument and multiplication by
 functions). This is the scalar case of the more general case of a
 convolution algebra acting on vector-valued functions on $A$ by
 difference operators, which we construct in Section~\ref{ss-3.7}.
\end{Remark}
\subsection[The category of pi-graded vector spaces of finite type]{The category of $\boldsymbol{\pi}$-graded vector spaces of finite type}
\begin{Definition}
 Let $\pi$ be a groupoid with set of objects $A$. A {\em $\pi$-graded vector
 space of finite type} over a field $k$ is a collection
 $(V_\alpha)_{\alpha\in\pi}$ of finite-dimensional vector spaces
 indexed by the arrows of $\pi$ such that for each $a\in A$ there are
 finitely many arrows $\alpha$ with source or target~$a$ and nonzero $V_\alpha$.
\end{Definition}
The $\pi$-graded vector spaces over $k$ form an abelian
category $\operatorname{Vect}_k(\pi)$: the $k$-vector space
$\operatorname{Hom}(V,W)$ of morphisms between objects $V,W$
consists of families $(f_\alpha)_{\alpha\in\pi}$
of linear maps $f_\alpha\colon V_\alpha\to W_\alpha$ and the composition
is defined componentwise.

\subsection{Tensor product}\label{ss-tp}
The finite type condition allows us to define a monoidal structure
(tensor product) on $\operatorname{Vect}_k(\pi)$. The tensor product
of objects is
\begin{equation}\label{e-tensorproduct}
 (V\otimes W)_\gamma=\oplus_{\beta\circ\alpha=\gamma}
 V_\alpha\otimes W_\beta.
\end{equation}
The direct sum is over all pairs of arrows whose composition is
$\gamma$ and has finitely many nonzero summands. Similarly the tensor
product $f\otimes g$ of morphisms has components
$\oplus_{\beta\circ\alpha=\gamma}f_\alpha\otimes g_\beta$. For any
three objects $U$, $V$, $W$ of $\operatorname{Vect}_k(\pi)$ and $\delta\in\pi$,
\begin{gather*}
 ((U\otimes V)\otimes W)_\delta
 =\oplus_{\alpha\circ\beta\circ\gamma=\delta}
 (U_\gamma\otimes V_{\beta})\otimes W_\alpha, \\
 (U\otimes (V\otimes W))_\delta
 =\oplus_{\alpha\circ\beta\circ\gamma=\delta}
 U_\gamma\otimes (V_{\beta}\otimes W_\alpha).
\end{gather*}
Therefore the associativity constraint in $\operatorname{Vect}_k$
defines an associativity constraint
\[
 \alpha_{UVW}\colon (U\otimes V)\otimes W\to U\otimes(V\otimes W)
\]
in $\operatorname{Vect}_k(\pi)$. The tensor unit in
$\operatorname{Vect}_k(\pi)$ is
$\mathbf 1=(\mathbf 1_\gamma)_{\gamma\in\pi}$ with $\mathbf 1_a=k$ for
identity arrows $a\in A$ and $\mathbf 1_\gamma=0$ for all other
arrows. Then for every object $V$ of $\operatorname{Vect}_k(\pi)$,
$(\mathbf 1\otimes V)_\gamma=k\otimes V_\gamma$ and
$(V\otimes \mathbf 1)_\gamma=V_\gamma\otimes k$. Thus the structure
isomorphisms $V_\gamma\cong V_\gamma\otimes k\cong k\otimes V_\gamma$
of $\operatorname{Vect}_k$ define natural isomorphisms
$\lambda\colon V\cong V\otimes \mathbf 1$,
$\rho\colon V\cong \mathbf 1\otimes V$ in
$\operatorname{Vect}_k(\pi)$.
\subsection{Duality}\label{ss-duality}
The dual of a $\pi$-graded vector space
$V\in\operatorname{Vect}_k(\pi)$ is the $\pi$-graded vector space
$V^*$ with components
$(V^*)_\gamma= \operatorname{Hom}_k(V_{\gamma^{-1}},k)$. Recall that an
object $V$ of a monoidal category admits a left dual of an
object $V$ if there is an object
$V^\vee$, called left dual of $V$, together with morphisms
$\delta\colon \mathbf 1 \to V\otimes V^\vee$,
$\operatorname{ev} \colon V^\vee\otimes V\to \mathbf 1 $ such that the
compositions
\begin{gather*}
 V\cong \mathbf 1\otimes V\to \big(V\otimes V^\vee\big)\otimes V\cong
 V\otimes\big(V^\vee\otimes V\big)\to V\otimes \mathbf 1\cong V, \\
 V^\vee\cong V^\vee\otimes \mathbf 1 \to V^\vee\otimes \big(V\otimes V^\vee\big)\cong
 \big(V^\vee\otimes V\big)\otimes V^\vee\to \mathbf 1 \otimes V^\vee
\end{gather*}
are equal to the identity morphism. Similarly one has the notion
of right dual object ${}^\vee V$ with morphisms $\mathbf 1 \to {}^\vee V\otimes V$,
$V\otimes {}^\vee V\to \mathbf 1 $. Right and left duals of finite dimensional
vector spaces coincide.
\begin{Lemma}\label{l-pivotal}
 Let $V\in \mathrm{Vect}_k(\pi)$. Then $V^\vee$ with components
 $\big(V^\vee\big)_\gamma=(V_{\gamma^{-1}})^*=\operatorname{Hom}_k(V_{\gamma^{-1}},k)$,
 and structure morphisms induced by those of the category of finite
 dimensional vector spaces is both left and right dual to $V$.
\end{Lemma}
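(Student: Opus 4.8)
The plan is to build the duality data componentwise out of the standard (co)evaluation maps of $\operatorname{Vect}_k$ and then to check the two zigzag (snake) identities by passing to a fixed degree, where the grading forces the composite to collapse to the ordinary snake identity for a single finite-dimensional vector space. First I would pin down the relevant components. With the composition convention of the excerpt, $\beta\circ\alpha$ is an identity arrow $\mathrm{id}_a$ exactly when $\beta=\alpha^{-1}$ and $s(\alpha)=a$, so the tensor product formula \eqref{e-tensorproduct} gives
\[
(V\otimes V^\vee)_a=\bigoplus_{\alpha\colon s(\alpha)=a}V_\alpha\otimes V_\alpha^*,\qquad
\big(V^\vee\otimes V\big)_a=\bigoplus_{\beta\colon t(\beta)=a}V_\beta^*\otimes V_\beta,
\]
both finite direct sums by the finite type hypothesis. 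I then define $\delta\colon\mathbf 1\to V\otimes V^\vee$ in degree $a$ to be the sum over $\alpha$ with $s(\alpha)=a$ of the standard coevaluations $k\to V_\alpha\otimes V_\alpha^*$, and $\operatorname{ev}\colon V^\vee\otimes V\to\mathbf 1$ in degree $a$ to be the sum over $\beta$ with $t(\beta)=a$ of the standard evaluations $V_\beta^*\otimes V_\beta\to k$.

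These are morphisms of $\pi$-graded vector spaces, which is immediate once one observes that $V^\vee$ is again of finite type: its nonzero components over a given object are the duals of the nonzero components of $V$ there. To verify the first zigzag I would trace the composite on the degree-$\gamma$ piece $V_\gamma$. Because the associativity and unit constraints of $\operatorname{Vect}_k(\pi)$ are induced degreewise by those of $\operatorname{Vect}_k$, the coevaluation inserts all diagonal summands $V_\mu\otimes V_\mu^*$ with $s(\mu)=s(\gamma)$; the subsequent evaluation, however, is supported on identity arrows, so it annihilates every contribution except the one with $\mu=\gamma$, and on that summand the composite is precisely the ordinary snake map $V_\gamma\to V_\gamma\otimes V_\gamma^*\otimes V_\gamma\to V_\gamma$, which is the identity. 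The second zigzag is checked identically.

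For the assertion that $V^\vee$ is also a right dual I would run the symmetric computation with the roles interchanged: the coevaluation $\mathbf 1\to V^\vee\otimes V$ and evaluation $V\otimes V^\vee\to\mathbf 1$ are assembled from the same standard maps, and since each $V_\alpha$ is finite dimensional its left and right duals agree, so the two zigzags again localize to the ordinary snake identities. The one genuinely delicate point, and the step I would treat most carefully, is exactly this localization: one must confirm that the grading forces the evaluation to see only the diagonal summand $\mu=\gamma$, so that the a priori large direct sum produced by the coevaluation collapses to a single term. Once this degreewise bookkeeping is settled, no further computation is required beyond invoking the known duality in $\operatorname{Vect}_k$.
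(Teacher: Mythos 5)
Your proposal is correct and follows the same route the paper takes: the paper only sketches the proof by exhibiting the structure morphism $\delta\colon \mathbf 1_a=k\to \oplus_{\gamma\in s^{-1}(a)}V_\gamma\otimes\big(V^\vee\big)_{\gamma^{-1}}$ built from the canonical coevaluations $k\to V_\gamma\otimes (V_\gamma)^*$, which is exactly your componentwise construction. Your degreewise verification of the zigzag identities — in particular the observation that the summand $(V^\vee)_{\mu^{-1}}\otimes V_\gamma$ sits in degree $\gamma\circ\mu^{-1}$, so the evaluation kills everything but $\mu=\gamma$ — correctly fills in the details the paper leaves implicit.
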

For example the morphism $\delta\colon \mathbf 1\to V\otimes V^\vee$
is the collection of maps
$\mathbf 1 _a=k\to \oplus_{\gamma\in s^{-1}(a)} V_\gamma\otimes
\big(V^\vee\big)_{\gamma^{-1}}$, defined by the canonical map
$k\to V_\gamma\otimes (V_{\gamma})^*$.

Monoidal categories admitting left and right duals for all objects,
which are then uniquely determined up to unique isomorphism, are called rigid. Left
and right dualities are monoidal functors to the opposite categories
with opposite tensor product. Rigid monoidal categories with coinciding
left and right dual functors are called pivotal, see \cite[Sections~1.6 and~1.7]{TuraevVirelizier2017}
for more details.
\begin{Theorem} The $k$-additive category $\operatorname{Vect}_k(\pi)$
 with the tensor product $\otimes$, the tensor unit $\mathbf 1$,
 associativity constraint $\alpha$, left and right multiplication by
 the tensor unit~$\lambda$, $\rho$, and duality $( \ )^\vee$
 is an abelian pivotal monoidal category.
\end{Theorem}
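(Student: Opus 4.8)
The plan is to reduce every axiom to the corresponding statement in the category $\operatorname{Vect}_k$ of finite-dimensional $k$-vector spaces, since all the structure on $\operatorname{Vect}_k(\pi)$ --- the abelian structure, the associativity and unit constraints, and the duality morphisms --- is assembled componentwise from that of $\operatorname{Vect}_k$. The abelian structure requires no further argument: kernels, cokernels, images, and the zero object are all formed arrow-by-arrow, and a componentwise kernel or image of a finite-type family is again of finite type (its support only shrinks), so $\operatorname{Vect}_k(\pi)$ inherits exactness from $\operatorname{Vect}_k$. For the monoidal and pivotal structure the only genuinely new point is that the operations $\otimes$ and $(\ )^\vee$ preserve the finite-type condition; once this is settled, the coherence axioms and the duality identities follow formally.

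First I would check that $V\otimes W$ is of finite type, which is where the groupoid hypotheses enter essentially. Fix $a\in A$; I must bound the arrows $\gamma$ with $s(\gamma)=a$ or $t(\gamma)=a$ and $(V\otimes W)_\gamma\neq 0$. If $s(\gamma)=a$ and $(V\otimes W)_\gamma=\oplus_{\beta\circ\alpha=\gamma}V_\alpha\otimes W_\beta$ is nonzero, then some factorization $\gamma=\beta\circ\alpha$ has $V_\alpha\neq 0$, $W_\beta\neq 0$, and $s(\alpha)=a$. By finite type of $V$ there are finitely many such $\alpha$; each has a target $b=t(\alpha)$, and by finite type of $W$ there are finitely many $\beta$ with $s(\beta)=b$ and $W_\beta\neq 0$. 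Hence there are finitely many composites $\gamma=\beta\circ\alpha$ with source $a$, and the case $t(\gamma)=a$ is symmetric. For the dual, $\big(V^\vee\big)_\gamma=(V_{\gamma^{-1}})^*$ is nonzero exactly when $V_{\gamma^{-1}}$ is, and inversion exchanges source and target, so the finite-type condition for $V^\vee$ is immediate from that for $V$; the tensor unit $\mathbf 1$ is trivially of finite type.

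With well-definedness in hand, the coherence axioms reduce to $\operatorname{Vect}_k$. Evaluating the pentagon for objects $U,V,W,X$ on the component indexed by $\delta\in\pi$ decomposes it into a direct sum, over all factorizations of $\delta$ into a composite of four arrows, of the pentagon diagram for the corresponding summands in $\operatorname{Vect}_k$, which commutes; the triangle axiom relating the associativity and unit constraints, together with the naturality of all three structure constraints, follows summand-by-summand in the same way. Finally, rigidity and pivotality are exactly the content of Lemma~\ref{l-pivotal}: it exhibits, for every object, the single graded space $V^\vee$ together with evaluation and coevaluation morphisms realizing it simultaneously as a left and as a right dual, so $\operatorname{Vect}_k(\pi)$ is rigid with coinciding left and right dual functors, hence pivotal. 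The main obstacle is the finite-type verification above, since this is the one place where the combinatorics of the groupoid --- rather than formal inheritance from $\operatorname{Vect}_k$ --- does real work.
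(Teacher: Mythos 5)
Your proposal is correct and follows the same route as the paper, which disposes of the theorem in one line as an immediate consequence of $\operatorname{Vect}_k$ being a $k$-additive abelian monoidal category together with Lemma~\ref{l-pivotal}; you simply make explicit the componentwise reduction and the finite-type verifications (for $\otimes$ and $(\ )^\vee$) that the paper treats as already settled in Sections~\ref{ss-tp} and~\ref{ss-duality}. No discrepancy in substance.
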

This is an immediate consequence of the fact that
$\operatorname{Vect}_k$ is a $k$-additive abelian monoidal category
and Lemma \ref{l-pivotal}.
\begin{Remark}
 Contrary to the case of finite dimensional vector spaces, the
 mo\-noid\-al category $\operatorname{Vect}_k(\pi)$ is not symmetric
 or braided, so that $V\otimes W$ is not isomorphic to $W\otimes V$
 in general. As we will see presently, the Grothendieck ring is not
 commutative in general.
\end{Remark}
\begin{Remark}\label{r-2} The above construction works for any $k$-additive
 rigid monoidal category $C$ over a commutative ring $k$ instead of
 $\operatorname{Vect}_k$. The resulting category of $\pi$-graded
 objects of $C$ of finite type is a $k$-additive monoidal category.
 For example, if we view a ring as a monoidal category with one
 object, the convolution ring~$\mathbb Z(\pi)$ is the category of
 $\pi$-graded objects of finite type of~$\mathbb Z$. One can also
 replace $\pi$ by a general small category, at the cost of giving up duality.
\end{Remark}

\subsection{Characters} The {\em character}
$\operatorname{ch}_V\in\mathbb Z(\pi)$ of
$V\in\operatorname{Vect}_k(\pi)$ is the map
$\gamma\mapsto \mathrm{dim}(V_\gamma)$.
\begin{Lemma} Let $V,W\in \operatorname{Vect}_k(\pi)$. Then
 \begin{gather*}
 \operatorname{ch}_{\mathbf 1} =1,\qquad \operatorname{ch}_{V^\vee}=\sigma(\operatorname{ch}_V), \\
 \operatorname{ch}_{V\oplus W}= \operatorname{ch}_V+\operatorname{ch}_W, \\
 \operatorname{ch}_{V\otimes W}=\operatorname{ch}_V*\operatorname{ch}_W
 \end{gather*}
\end{Lemma}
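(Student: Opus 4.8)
The plan is to check the four identities separately, in each case evaluating both sides on an arbitrary arrow $\gamma\in\pi$ and reducing the claim to an elementary statement about dimensions of finite-dimensional vector spaces. Since by definition $\operatorname{ch}_V(\gamma)=\dim V_\gamma$, and all the relevant operations on $\pi$-graded vector spaces are described by explicit formulas for their components, every identity becomes a component-wise comparison.

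For the unit, I would observe that $\mathbf 1_\gamma=k$ when $\gamma$ is an identity arrow and $\mathbf 1_\gamma=0$ otherwise, so $\operatorname{ch}_{\mathbf 1}(\gamma)$ equals $1$ on identity arrows and $0$ elsewhere; this is precisely the characteristic function of the identity arrows, which is the unit $1\in\mathbb Z(\pi)$. For the dual, the component formula $\big(V^\vee\big)_\gamma=(V_{\gamma^{-1}})^*$ of Lemma~\ref{l-pivotal} together with $\dim V^*=\dim V$ for finite-dimensional spaces gives $\operatorname{ch}_{V^\vee}(\gamma)=\dim V_{\gamma^{-1}}=\operatorname{ch}_V(\gamma^{-1})$, which is exactly $\sigma(\operatorname{ch}_V)(\gamma)$ by the definition of the involution $\sigma$.

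For the direct sum I would use $(V\oplus W)_\gamma=V_\gamma\oplus W_\gamma$ and additivity of dimension. For the tensor product the key input is the decomposition \eqref{e-tensorproduct}, namely $(V\otimes W)_\gamma=\oplus_{\beta\circ\alpha=\gamma}V_\alpha\otimes W_\beta$; taking dimensions, using additivity over the direct sum and $\dim(V_\alpha\otimes W_\beta)=\dim V_\alpha\cdot\dim W_\beta$, yields $\sum_{\beta\circ\alpha=\gamma}\dim V_\alpha\cdot\dim W_\beta$, which is precisely $(\operatorname{ch}_V*\operatorname{ch}_W)(\gamma)$. There is no genuine obstacle here; the only point requiring care is that all the sums occurring are finite, so that the dimensions and the convolution product are well defined. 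This is guaranteed by the finite type condition, which ensures that for each object only finitely many arrows with a given source or target carry a nonzero component.
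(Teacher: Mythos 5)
Your proposal is correct and is exactly the componentwise verification the authors intend: the paper states this lemma without proof, treating all four identities as immediate consequences of the definitions of $\mathbf 1$, the dual $\big(V^\vee\big)_\gamma=(V_{\gamma^{-1}})^*$, the direct sum, and the tensor product decomposition \eqref{e-tensorproduct}, together with the finite type condition ensuring the convolution sum is finite. Nothing further is needed.
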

Since exact sequences of vector spaces split, the
character map $\operatorname{ch}\colon V\mapsto \operatorname{ch}_V$
descends to a~ring homomorphism from the Grothendieck ring
$K(\operatorname{Vect}_k(\pi))$ to the convolution ring
$\mathbb Z(\pi)$.
\begin{Proposition} The map $K(\operatorname{Vect}_k(\pi))\to\mathbb Z(\pi)$
 is an isomorphism of involutive unital rings.
\end{Proposition}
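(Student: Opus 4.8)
The previous Lemma establishes that $\operatorname{ch}$ is additive and satisfies $\operatorname{ch}_{\mathbf 1}=1$, $\operatorname{ch}_{V\otimes W}=\operatorname{ch}_V*\operatorname{ch}_W$ and $\operatorname{ch}_{V^\vee}=\sigma(\operatorname{ch}_V)$, so the induced map $K(\operatorname{Vect}_k(\pi))\to\mathbb Z(\pi)$ is already a homomorphism of involutive unital rings. It therefore only remains to prove that it is bijective, and the plan is to deduce this from the semisimplicity of $\operatorname{Vect}_k(\pi)$.

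First I would record the structure of the category. For each arrow $\gamma\in\pi$ let $S_\gamma$ denote the $\pi$-graded vector space with $(S_\gamma)_\gamma=k$ and $(S_\gamma)_\beta=0$ for $\beta\neq\gamma$; these are of finite type and simple, since any subobject is componentwise a subspace and $k$ has no nontrivial subspaces. Because $\operatorname{Hom}$ and the abelian structure of $\operatorname{Vect}_k(\pi)$ are defined componentwise, every object $V$ splits canonically as $V\cong\bigoplus_{\gamma}S_\gamma^{\oplus\dim V_\gamma}$, the finite type condition guaranteeing that this direct sum is again an object of finite type. In particular $\operatorname{Vect}_k(\pi)$ is semisimple, every short exact sequence splits, and in $K(\operatorname{Vect}_k(\pi))$ one has $[V]=\sum_\gamma\dim(V_\gamma)[S_\gamma]$, whose image under $\operatorname{ch}$ is precisely the function $\gamma\mapsto\dim V_\gamma$.

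For surjectivity I would use the decomposition of a function into positive and negative parts. Given $n\in\mathbb Z(\pi)$, set $n^+=\max(n,0)$ and $n^-=\max(-n,0)$, so that $n=n^+-n^-$ with $n^\pm\geq0$. The support of each $n^\pm$ is contained in that of $n$, so both functions satisfy the finiteness condition defining $\mathbb Z(\pi)$ and hence are the characters of the finite type objects $V^\pm=\bigoplus_\gamma S_\gamma^{\oplus n^\pm(\gamma)}$; thus $n=\operatorname{ch}_{V^+}-\operatorname{ch}_{V^-}$ lies in the image. For injectivity I would represent an arbitrary class as $[V]-[W]$ and assume it maps to $0$, i.e.\ $\operatorname{ch}_V=\operatorname{ch}_W$. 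Then $\dim V_\gamma=\dim W_\gamma$ for every $\gamma$, so $V_\gamma\cong W_\gamma$ componentwise and therefore $V\cong W$ in $\operatorname{Vect}_k(\pi)$; hence $[V]=[W]$ and the kernel is trivial.

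I do not expect a serious obstacle: the entire content is the identification of $K(\operatorname{Vect}_k(\pi))$ with the free abelian group on the arrows of $\pi$ cut out by the finite type condition, the ring and involution structures having already been matched. The only point demanding care is the bookkeeping of that condition, namely checking that the semisimple decomposition of a general $V$ and the objects $V^\pm$ attached to $n^\pm$ remain of finite type; this is immediate, since all the supports involved are contained in supports already known to satisfy the condition.
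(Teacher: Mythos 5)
Your proposal is correct and follows essentially the same route as the paper, which simply exhibits the inverse map $n=n_+-n_-\mapsto\bigl[\bigl(k^{n_+(\gamma)}\bigr)_{\gamma\in\pi}\bigr]-\bigl[\bigl(k^{n_-(\gamma)}\bigr)_{\gamma\in\pi}\bigr]$ using the same positive/negative part decomposition. Your additional spelling out of semisimplicity and of injectivity (characters determine objects up to isomorphism) is just the detail the paper leaves implicit.
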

The inverse map sends $n=n_+-n_-$ with $n_\pm(\gamma)\geq0$ for all $\gamma$
to the formal difference $\big[(k^{n_+(\gamma)})_{\gamma\in\pi}\big]-\big[(k^{n_-(\gamma)})_{\gamma\in\pi}\big]$.

\subsection{Subgroupoids}\label{ss-subg}
Let $i\colon\pi'\hookrightarrow \pi$ be a subgroupoid. Then we have an
exact fully faithful functor
$i_*\colon\operatorname{Vect}_k(\pi')\to\operatorname{Vect}_k(\pi)$ so that
\[
 (i_*V)_\gamma=\begin{cases}V_{\gamma},&\text{if $\gamma\in\pi'$,}\\
 0,&\text{otherwise.}
 \end{cases}
\]
We can thus view $\mathrm{Vect}_k(\pi')$ as a full subcategory of
$\mathrm{Vect}_k(\pi)$ of $\pi$-graded vector spaces $V$ such that
$V_\gamma=0$ for $\gamma\not\in\pi'$.

\subsection[Convolution algebras with coefficients in pi-graded algebras]{Convolution algebras with coefficients in $\boldsymbol{\pi}$-graded algebras}\label{ss-cac}

In the setting of $\pi$-graded vector spaces the natural home for
transfer matrices is convolution algebras with coefficients in
$\pi$-graded algebras over a field (or commutative ring)~$k$.
\begin{Definition} Let $\pi$ be a groupoid. A $\pi$-graded algebra $R$
 over $k$ is a collection $(R_\gamma)_{\gamma\in\pi}$ of $k$-vector
 spaces labeled by arrows of $\pi$ with bilinear products
 $ R_\alpha\times R_\beta\to R_{\beta\circ\alpha}$,
 $(x,y)\mapsto xy$, defined for composable arrows $\alpha$, $\beta$
 and units $1_a\in R_a$, for $a\in A$ such that
 (i) $(xy)z=x(yz)$
 whenever defined and (ii) $x1_b=x=1_ax$ for all $x\in R_\alpha$ of degree
 $\alpha\in\pi(a,b)$.
\end{Definition}
\begin{Remark}
 Am algebra object in $\operatorname{Vect}_\pi$ defines a
 $\pi$-graded algebra, but we will need to consider more general
 examples which do not necessarily fulfill the finite type condition,
 such as $\operatorname{\underline{End}}\mathbf 1$ below.
\end{Remark}
\begin{Example}\label{e-1}
 Let $V\in\operatorname{Vect}_k(\pi)$ and let
 $\operatorname{\underline{End}}V$ be the $\pi$-graded vector space
 with
 $(\operatorname{\underline{End}}V)_\alpha=
 \oplus_{\gamma\in\pi(a,a)}
 \operatorname{Hom}_k(V_{\alpha\circ\gamma\circ\alpha^{-1}},V_\gamma)$,
 where $a=s(\alpha)$. Then $\operatorname{\underline{End}}V$ with the
 product given by the composition of linear maps
 \[
 \operatorname{Hom}_k(V_{\alpha\gamma\alpha^{-1}},V_\gamma) \otimes
 \operatorname{Hom}_k (V_{\beta\alpha\gamma\alpha^{-1}\beta^{-1}},
 V_{\alpha\gamma\alpha^{-1}}) \to \operatorname{Hom}_k
 (V_{\beta\alpha\gamma(\beta\alpha)^{-1}},V_\gamma)
 \]
 and unit $1_a=\oplus_{\gamma\in\pi(a,a)}\mathrm{Id}_{V_\gamma}$ is a
 $\pi$-graded algebra.
\end{Example}
\begin{Definition}
 Let $R$ be a $\pi$-graded algebra. The {\em convolution algebra
 $\Gamma(\pi,R)$ with coefficients in $R$} is the $k$-algebra of maps
 $f\colon\pi\to \sqcup_{\alpha\in\pi}R_\alpha$ such that
 \begin{enumerate}\itemsep=0pt
 \item[(i)] $f(\alpha)\in R_\alpha$ for all arrows $\alpha\in\pi$,
 \item[(ii)] for every $a\in A$, there are finitely many $\alpha\in s^{-1}(a)\cup t^{-1}(a)$
 such that $f(\alpha)\neq0$.
 \end{enumerate}
 The product is the convolution product
 \[
 f*g(\gamma)=\sum_{\beta\circ\alpha=\gamma}f(\alpha)g(\beta).
 \]
\end{Definition}
\begin{Example}\label{e-2} Let $R=\operatorname{\underline{End}}\mathbf 1$
 be the $\pi$-graded algebra of Example \ref{e-1} for the tensor unit
 $\mathbf 1$. Then $R_\alpha=k$ for all arrows $\alpha$ and
 $\Gamma(\pi,R)=k(\pi)=k\otimes \mathbb Z(\pi)$ is the extension of
 scalars of the convolution ring of $\pi$, see Remark \ref{r-1}.
\end{Example}

\begin{Lemma} Let $\pi$ be a groupoid with object set $A$.
 The convolution algebra $\Gamma(\pi,R)$ with coefficients in a $\pi$-graded
 algebra $R$ is an associative unital $k$-algebra. The unit is the
 map $a\mapsto 1_a$ for identity arrows $a\in A$ and $\alpha\mapsto 0$
 for other arrows.
\end{Lemma}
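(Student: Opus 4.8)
The plan is to verify the three assertions separately: that the convolution product is well defined and keeps us inside $\Gamma(\pi,R)$, that it is associative, and that the displayed map is a two-sided unit. The $k$-module structure on $\Gamma(\pi,R)$ is the pointwise one (using that each $R_\alpha$ is a $k$-vector space), and the finite type condition is visibly stable under addition and scalar multiplication. All of the algebraic content will come from the two axioms of the $\pi$-graded algebra $R$ — associativity $(xy)z=x(yz)$ and the unit law $1_ax=x=x1_b$ — applied termwise, while the finiteness conditions do the bookkeeping that makes the sums meaningful; this is exactly the pattern of the scalar convolution ring $\mathbb Z(\pi)$ with coefficients promoted to $R$.

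First I would check that $f*g$ again lies in $\Gamma(\pi,R)$. For fixed $\gamma\in\pi(a,c)$ each summand $f(\alpha)g(\beta)$ with $\beta\circ\alpha=\gamma$ lands in $R_{\beta\circ\alpha}=R_\gamma$, so $(f*g)(\gamma)\in R_\gamma$. The sum is finite because a factorization $\gamma=\beta\circ\alpha$ is determined by $\alpha$ alone (then $\beta=\gamma\circ\alpha^{-1}$), where $\alpha$ ranges over $s^{-1}(a)$, and only finitely many such $\alpha$ have $f(\alpha)\neq0$ by the finite type condition on $f$. To see that $f*g$ itself satisfies the finite type condition, fix $b\in A$ and consider $\gamma$ with $s(\gamma)=b$ and $(f*g)(\gamma)\neq0$: any such $\gamma$ has a factorization $\gamma=\beta\circ\alpha$ with $f(\alpha)\neq0$, $g(\beta)\neq0$, $\alpha\in s^{-1}(b)$ and $\beta\in s^{-1}(t(\alpha))$. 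There are finitely many admissible $\alpha$ by finiteness of $f$ on $s^{-1}(b)$, and for each of these finitely many admissible $\beta$ by finiteness of $g$; since $\gamma=\beta\circ\alpha$ is determined by the pair $(\alpha,\beta)$, only finitely many such $\gamma$ occur. The case $t(\gamma)=b$ is symmetric, using finiteness of $g$ on $t^{-1}(b)$ first. Bilinearity of $*$ over $k$ is immediate from bilinearity of the products of $R$.

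For associativity I would expand both triple products and reindex. One finds
\[
 ((f*g)*h)(\gamma)=\sum_{\eta\circ\beta\circ\alpha=\gamma}\bigl(f(\alpha)g(\beta)\bigr)h(\eta),\qquad
 (f*(g*h))(\gamma)=\sum_{\eta\circ\beta\circ\alpha=\gamma}f(\alpha)\bigl(g(\beta)h(\eta)\bigr),
\]
where in both cases the sum runs over triples $(\alpha,\beta,\eta)$ of composable arrows with $\eta\circ\beta\circ\alpha=\gamma$; these sums are finite by the previous paragraph, so the reindexing is legitimate, and the two sides agree term by term by the associativity axiom of $R$. For the unit, let $\mathbf 1_\Gamma$ be the displayed map; it lies in $\Gamma(\pi,R)$ since for each $a$ the only arrow in $s^{-1}(a)\cup t^{-1}(a)$ on which it can be nonzero is the identity arrow $a$. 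In $(\mathbf 1_\Gamma*f)(\gamma)$ the only factorization $\gamma=\beta\circ\alpha$ with $\mathbf 1_\Gamma(\alpha)\neq0$ has $\alpha=\mathrm{id}_{s(\gamma)}$ and $\beta=\gamma$, contributing $1_{s(\gamma)}f(\gamma)=f(\gamma)$ by the left unit law of $R$; symmetrically $(f*\mathbf 1_\Gamma)(\gamma)=f(\gamma)1_{t(\gamma)}=f(\gamma)$ by the right unit law. Thus $\mathbf 1_\Gamma$ is a two-sided identity.

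The step most likely to require care is the closure under $*$ and the finiteness of the defining sums: everything algebraic is inherited termwise from the axioms of $R$, so the genuine work is the argument with the source- and target-finiteness conditions guaranteeing that $f*g$ again has finite type, precisely as in the scalar case of $\mathbb Z(\pi)$.
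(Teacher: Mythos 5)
The paper states this lemma without proof, treating it as a routine extension of the scalar case $\mathbb Z(\pi)$, and your argument supplies exactly the expected verification: well-definedness and preservation of the finite type condition for $f*g$ (using the source/target finiteness of $f$ and $g$ in the two respective cases), termwise associativity via reindexing over composable triples, and the two unit laws of $R$. Your proof is correct and consistent with the paper's conventions for the graded product $R_\alpha\times R_\beta\to R_{\beta\circ\alpha}$ and the units $1_a$.
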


Let $\varphi\colon R\to R'$ be a morphism of $\pi$-graded algebras.
Then $\varphi_*\colon \Gamma(\pi,R)\to \Gamma(\pi,R')$ given
$\varphi_*(f)=\varphi\circ f$ is an algebra homomorphism. This
defines a functor from the category of $\pi$-graded $k$-algebras to
associative unital algebras. In particular we have a morphism of
algebras $k(\pi)\to \Gamma(\pi,R)$ for any~$R$.

\section[Yang--Baxter equation and RLL relations]{Yang--Baxter equation and $\boldsymbol{RLL}$ relations}\label{s-3}
\subsection{Yang--Baxter equation}

Let $k=\mathbb C$.
A {\em Yang--Baxter operator} on
$V\in \operatorname{Vect}_k(\pi)$ is a meromorphic function
$z\mapsto\check R(z)\in\operatorname{End}(V\otimes V)$ of the
{\em spectral parameter} $z\in\mathbb C$ with values in the endomorphisms
of $V\otimes V$, obeying the Yang--Baxter equation\footnote{The Yang--Baxter equation
 is usually formulated for the operator $R(z)=p\circ\check R(z)$ obtained
 by composition with the flip $p\colon u\otimes v\mapsto v\otimes u$. Since~$p$ is not
 a morphism of $\pi$-graded vector spaces in general, it is better to use~$\check R(z)$.}
\begin{equation}\label{e-YB}
 \check R(z-w)^{(23)}\check R(z)^{(12)}
 \check R(w)^{(23)}
 = \check R(w)^{(12)}
\check R(z)^{(23)}
\check R(z-w)^{(12)}
\end{equation}
in $\operatorname{End}(V\otimes V\otimes V)$ for all generic values of the spectral
parameters $z$, $w$, and the inversion (also called unitarity) relation
\begin{equation*}
 \check R(z)\check R(-z)=\mathrm{id}_{V\otimes V},
\end{equation*}
for generic $z$.
The restriction of $\check R(z)$ to $V_\alpha\otimes V_\beta$ for
composable arrows $\alpha$, $\beta$ has components in each direct
summand of the decomposition~\eqref{e-tensorproduct}:
\[
 \check R(z)|_{V_\alpha\otimes V_\beta}=\oplus_{\gamma,\delta}
 {\mathcal W}(z;\alpha,\beta,\gamma,\delta).
\]
The sum is over $\gamma$, $\delta$ such that
$\beta\circ\alpha=\delta\circ\gamma$, and $\mathcal W$ is the component
\[
 {\mathcal W}(z;\alpha,\beta,\gamma,\delta)\in\operatorname{Hom}_k
 (V_\alpha\otimes V_\beta, V_\gamma\otimes V_\delta).
\]
The Yang--Baxter equation translates to its IRF
(interaction-round-a-face) version, called the star-triangle relation
\begin{gather*}
 \sum_{\rho,\sigma,\tau}
{\mathcal W}(z-w;\rho,\sigma,\epsilon,\delta)^{(23)}
 {\mathcal W}(z;\alpha,\tau,\zeta,\rho)^{(12)}
 {\mathcal W}(w;\beta,\gamma,\tau,\sigma)^{(23)}
 \\
\qquad{} =
 \sum_{\rho,\sigma,\tau}
 {\mathcal W}(w;\sigma,\tau,\zeta,\epsilon)^{(12)}
 {\mathcal W}(z;\rho,\gamma,\tau,\delta)^{(23)}
 {\mathcal W}(z-w;\alpha,\beta,\sigma,\rho)^{(12)}.
\end{gather*}
There is one such equation for all
$\alpha,\dots,\zeta$ so that
$\gamma\circ\beta\circ\alpha=\delta\circ\epsilon\circ\zeta$ and the
sum is over arrows $\rho$, $\sigma$, $\tau$ for which all factors are
defined, namely such that the diagrams are commutative in~$\pi$.

It is convenient to have a graphical representation for these
morphisms:\tikzset{>=latex}
\[
\begin{tikzpicture}[scale=1.5]
 \draw[->,dashed](-.6,.5) node[left]{ ${\mathcal W}(z;\alpha,\beta,\gamma,\delta)=$}
 -- (1.7,.5);
 \draw[->,dashed](.5,-.6) 
 -- (.5,1.6);
 \draw[->](0,1) node[left]{$a$} -- (.5,1) node[above,fill=white]{$\gamma$} -- (1,1)
 node[right]{$d$};
 \draw[->](1,1) -- (1,.5) node[right , fill=white]{$\delta$} -- (1,0);
 \draw[->](0,1) -- (0,.5) node[left, fill=white]{$\alpha$} -- (0,0);
 \draw[->]node[left]{$b$}(0,0) -- (.5,0) node[below,fill=white]{$\beta$}-- (1,0)
 node[right]{$c$};
 \end{tikzpicture}
\]
The dashed lines are associated with the vector spaces between which
$\mathcal W$ acts: moving from the southwest to the northeast
according to the orientation of the dashed lines we move from
$V_\alpha\otimes V_\beta$ to $V_\gamma\otimes V_\delta$. We have also
displayed in the corners the objects between which the arrows
$\alpha,\dots,\delta$ are defined. For example $\alpha$ is an arrow
from~$a$ to~$b$. In the literature one often considers the case where
there is at most one arrow from one object to any other object, as is the case
in the original Andrews--Baxter--Forrester RSOS models, and it
is then customary to label $\mathcal W$ by the four objects $a$, $b$, $c$, $d$
instead of the morphisms.

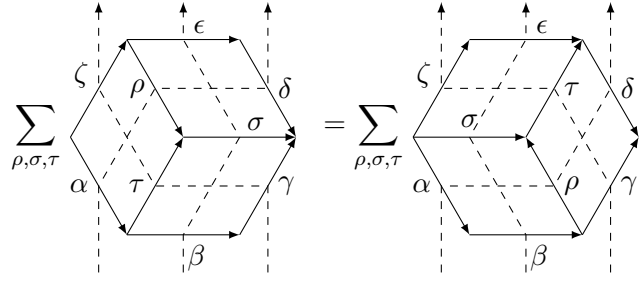
\begin{figure}\centering
 \begin{tikzpicture}[scale=1.5]
 \draw[->] (0,0) node[left]{$\displaystyle{\sum_{\rho,\sigma,\tau}}$}
 -- (.5,-.866);
 \draw[->] (0,0) -- (.5, .866);
 \draw[->] (.5, .866) -- (1,0);
 \draw[->] (.5,-.866) -- (1,0);
 \draw[->] (.5, .866) -- (1.5, .866);
 \draw[->] (.5,-.866) -- (1.5, -.866);
 \draw[->] (1,0) -- (2,0);
 \draw[->] (1.5, .866) -- (2,0);
 \draw[->] (1.5, -.866) -- (2,0);
 \draw[->,dashed]
 (.25, -1.2) 
 -- (.25, -.433) node[left]{$\alpha$}
 -- (.75, .433) node[left]{$\rho$}
 -- (1.75, .433) node[right]{$\delta$}
 -- (1.75, 1.2);
 \draw[->,dashed]
 (1, -1.2) 
 -- (1, -.866) node[right=5pt,below]{$\beta$}
 -- (1.5, 0) node[above=5pt,right=-1pt]{$\sigma$}
 -- (1, .866) node[above=5pt,right]{$\epsilon$}
 -- (1, 1.2);
 \draw[->,dashed]
 (1.75, -1.2) 
 -- (1.75, -.433) node[right]{$\gamma$}
 -- ( .75, -.433) node[left]{$\tau$}
 -- ( .25, .433) node[above=5pt,left]{$\zeta$}
 -- ( .25, 1.2);
\end{tikzpicture}
\begin{tikzpicture}[scale=1.5]
 \draw[->] (0,0) node[left]{$\displaystyle{=\sum_{\rho,\sigma,\tau}}$}
 -- (.5,-.866);
 \draw[->] (0,0) -- (.5, .866);
 \draw[->] (1,0) -- (1.5, .866);
 \draw[->] (1.5,-.866) -- (1,0);
 \draw[->] (.5, .866) -- (1.5, .866);
 \draw[->] (.5,-.866) -- (1.5, -.866);
 \draw[->] (0,0) -- (1,0);
 \draw[->] (1.5, .866) -- (2,0);
 \draw[->] (1.5, -.866) -- (2,0);
 \draw[->,dashed]
 (.25, -1.2) 
 -- (.25, -.433) node[left]{$\alpha$}
 -- (1.25, -.433) node[right]{$\rho$}
 -- (1.75, .433) node[right]{$\delta$}
 -- (1.75, 1.2);
 \draw[->,dashed]
 (1, -1.2) 
 -- (1, -.866) node[right=5pt,below]{$\beta$}
 -- (.5, 0) node[above]{$\sigma$}
 -- (1, .866) node[above=5pt,right]{$\epsilon$}
 -- (1, 1.2);
 \draw[->,dashed]
 (1.75, -1.2) 
 -- (1.75, -.433) node[right]{$\gamma$}
 -- (1.25, .433) node[right]{$\tau$}
 -- ( .25, .433) node[above=5pt,left]{$\zeta$}
 -- ( .25, 1.2);
\end{tikzpicture}
\caption{The Yang--Baxter equation. The arrows $\alpha,\dots,\zeta$ form a commutative hexagon and the sum is over arrows $\rho$, $\sigma$, $\tau$ making the squares commutative.}
\end{figure}

\subsection[RLL relations]{$\boldsymbol{RLL}$ relations}\label{ss-RLL}

The machinery of the quantum inverse scattering method~\cite{Faddeev1982} can be applied: given a solution
$\check R(z)\in \operatorname{End}(V\otimes V)$ of the dynamical Yang--Baxter equation
for a $\pi$-graded vector space $V$, an {\em $L$-operator}
on $W\in\operatorname{Vect}_k$ is a meromorphic function $z\mapsto
L(z)\in\operatorname{Hom}(V\otimes W,W\otimes V)$ such that
\begin{enumerate}\itemsep=0pt
\item[(i)] $L(z)$ is invertible for generic $z$,
\item[(ii)]
 $L$ obeys the {\it RLL} relations
\[
 \check R(z-w)^{(23)}
 L(z)^{(12)}L(w)^{(23)}
=L(w)^{(12)}L(z)^{(23)}
 \check R(z-w)^{(12)},
\]
in $\operatorname{Hom}(V\otimes V\otimes W,W\otimes V\otimes V)$.
\end{enumerate}
For example $\check R(z)$ is an $L$-operator on $V$ thanks to the Yang--Baxter equation.

Given a basis of $V$ the {\it RLL} relations may be written as relations for
the matrix entries $L_{ij}(z)\in\operatorname{End}(W)$. Thus
$L$-operators may be understood as $\pi$-graded meromorphic
representations of the quadratic algebra $A_R$ with generators
$L_{ij}(z)$, and {\it RLL} relations. Here meromorphic refers to the
required meromorphic dependence on $z\in\mathbb C$.

\subsection[The monoidal category M(R,pi)]{The monoidal category $\boldsymbol{M(R,\pi)}$}
The $L$-operators form an abelian mo\-noid\-al category $M(R,\pi)$ of
$\pi$-graded meromorphic representations of~$A_R$: an object
$(W,L_W)$ is a $\pi$-graded vector space
$W\in\operatorname{Vect}_k(\pi)$ endowed with an $L$-operator $L_W$
on~$W$. A morphism from $(W,L_W)$ to $(Z,L_Z)$ is a morphism
$f\colon W\to Z$ of $\pi$-graded vector spaces such that
\[
 (f\otimes\mathrm{id}_V)L_W(z)=L_Z(z)(\mathrm{id}_V\otimes f),
\]
for all $z$.

The tensor product $(W\otimes Z,L_{W\otimes Z})$
is the tensor product
in $\operatorname{Vect}_k(\pi)$ endowed with the composition
\[
 L_{W\otimes Z}(z)\colon \
 V\otimes W\otimes Z
 \xrightarrow{L_W(z)\otimes \mathrm{id}_Z}
 W\otimes V\otimes Z
 \xrightarrow{\mathrm{id}_W\otimes L_Z(z)}
 W\otimes Z\otimes V.
\]
The fact that $L_{W\otimes Z}$ is an $L$-operator is a straightforward
consequence of the definitions. We have an action of $\mathbb C$ on
the category $M(R,\pi)$: for each $u\in \mathbb C$ let $t_u$ be the
endofunctor sending an object $(W,L_W)$ to $(W,L_W(\cdot+u))$ and a
morphism~$f$ to~$f$. Clearly $t_0$ is the identity endofunctor and
$t_ut_v=t_{u+v}$. Moreover $t_u$ is a monoidal functor: the obvious
map $t_u(W)\otimes t_u(Z)\to t_u(W\otimes Z)$ is a natural
isomorphism.
\begin{Example}\label{e-0}
 Let $V\in M(R,\pi)$ be the representation with $L$-operator
 $\check R$. Then for each $u\in\mathbb C$, the representation
 $V(u)=t_uV$ has $L$ operator $L_{V(u)}(z)=\check R(z+u)$. This
 object of $M(R,\pi)$ is called the vector representation with evaluation
 point $u$.
\end{Example}
\begin{Example}
 Let $\mathbf 1\in \operatorname{Vect}_k(\pi)$ be the tensor unit,
 see Section \ref{ss-tp} and let $L_{\mathbf 1}(z)$ be the
 composition
 $\rho\lambda^{-1}\colon V\otimes \mathbf 1\to V\to \mathbf 1\otimes
 V$ of the structure isomorphisms. Then $\mathbf 1$ with this
 $L$-operator is a representation, called the trivial
 representation. It is fixed by the action of $t_u$.
\end{Example}
\begin{Example}
 The dual representation of a representation $(W,L_W)$ admitting a
 dual is the representation $\big(W^\vee,L_{W^\vee}\big)$ on the $\pi$-graded
 dual vector space $W$ (see Section~\ref{ss-duality}). Its $L$-operator
 $L_{W^\vee}(z)=\tilde L_W(z)^{-1}$ is the inverse of the dual
 operator $\tilde L_W(z)\colon W^\vee\otimes V\to V\otimes W^\vee$
 defined as the composition
 \begin{align*}
 W^\vee\otimes V
 \to W^\vee\otimes V\otimes W\otimes W^\vee
 \xrightarrow{L_W(z)^{(23)}} W^\vee\otimes W\otimes V\otimes W^\vee
 \to V\otimes W^{\vee}
 \end{align*}
 with the structure maps defining the duality in the category of
 $\pi$-graded vector spaces, see Section \ref{ss-duality}. It exists whenever
 $\tilde L_W(z)$ is invertible for generic $z$.
\end{Example}
\subsection[R-matrices]{$\boldsymbol{R}$-matrices}
Let $W,Z\in M(R,\pi)$. An isomorphism
\[
 \check R_{W,Z}\colon \ W\otimes Z\to Z\otimes W
\]
in $M(R,\pi)$ is called an $R$-matrix. This means that
$\check R_{W,Z}$ is a morphism of $\pi$-graded vector spaces obeying
the intertwining relation
\[
 \check R_{W,Z}^{(12)}L_Z(z)^{(23)}L_W(z)^{(12)}=
 L_W(z)^{(23)}L_Z(z)^{(12)}\check R_{W,Z}^{(23)}
\]
in $\operatorname{Hom}(V\otimes W\otimes Z,Z\otimes W\otimes V)$.
\begin{Example}
Let $V(u)$ be the vector representation with evaluation point~$u$.
Then the Yang--Baxter equation implies that
 $\check R(u-v)$ is a morphism
 $V(u)\otimes V(v)\to V(v)\otimes V(u)$. It is an $R$-matrix except if
 $u-v$ or $v-u$ is a pole of $\check R$.
\end{Example}

\begin{Proposition}\quad
 \begin{enumerate}\itemsep=0pt
 \item[$(i)$] If $\check R_{W,Z}$ is an $R$-matrix for $W,Z\in M(R,\pi)$
 and $u\in\mathbb C$, then the same isomorphism $\check R_{W,Z}$ of
 $\pi$-graded vector spaces is an $R$-matrix for $t_uV$, $t_uW$.
 \item[$(ii)$] If $\check R_{W,Z}$, $\check R_{W,Z'}$ are $R$-matrices
 then $\check R_{W,Z\otimes Z'}=\check R_{W,Z'}^{(23)}\check R_{W,Z}^{(12)}$
 is an $R$-matrix for $W$, $Z\otimes Z'$.
 \item[$(iii)$] If $\check R_{W,Z}$, $\check R_{W',Z}$ are $R$-matrices
 then
 $\check R_{W\otimes W',Z}\!=\!\check R_{W,Z}^{(12)}\check R_{W',Z}^{(23)}$ is an $R$-matrix for $W\otimes W'$,~$Z$.
 \end{enumerate}
\end{Proposition}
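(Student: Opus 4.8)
The plan is to deduce all three parts formally from the monoidal structure on $M(R,\pi)$ established above, rather than by manipulating the $RLL$ relations by hand. The guiding observation is that an $R$-matrix is, by definition, nothing but an isomorphism $W\otimes Z\to Z\otimes W$ in the category $M(R,\pi)$, and isomorphisms in a monoidal category may be tensored with identity morphisms and composed to yield new isomorphisms. Thus each assertion will follow once the relevant map is exhibited as such a composite.

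For $(i)$, I would use that $t_u$ is a monoidal endofunctor of $M(R,\pi)$ which acts as the identity on underlying morphisms. Applying $t_u$ to the isomorphism $\check R_{W,Z}\colon W\otimes Z\to Z\otimes W$ and inserting the monoidal structure isomorphisms $t_uW\otimes t_uZ\cong t_u(W\otimes Z)$ and $t_u(Z\otimes W)\cong t_uZ\otimes t_uW$ yields an isomorphism $t_uW\otimes t_uZ\to t_uZ\otimes t_uW$ in $M(R,\pi)$ whose underlying map is again $\check R_{W,Z}$. Concretely, the intertwining relation defining an $R$-matrix holds for all generic $z$, so substituting $z\mapsto z+u$ produces the same relation with $L_W(z)$, $L_Z(z)$ replaced by $L_W(z+u)$, $L_Z(z+u)$, which are exactly the $L$-operators of $t_uW$, $t_uZ$; this is the invariance of $\check R_{W,Z}$ under a common shift of evaluation points.

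For $(ii)$ I would exhibit $\check R_{W,Z\otimes Z'}$ as a composite of morphisms in $M(R,\pi)$. Suppressing associativity constraints, consider
\[
W\otimes Z\otimes Z'\xrightarrow{\check R_{W,Z}\otimes\mathrm{id}_{Z'}}Z\otimes W\otimes Z'\xrightarrow{\mathrm{id}_Z\otimes\check R_{W,Z'}}Z\otimes Z'\otimes W.
\]
Each arrow is a tensor product of an $R$-matrix with an identity morphism, hence a morphism in $M(R,\pi)$ by the monoidal structure, and each is an isomorphism; therefore the composite is an isomorphism in $M(R,\pi)$, i.e.\ an $R$-matrix for $W$, $Z\otimes Z'$. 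In the superscript notation the two arrows are $\check R_{W,Z}^{(12)}$ and $\check R_{W,Z'}^{(23)}$, so the composite equals $\check R_{W,Z'}^{(23)}\check R_{W,Z}^{(12)}$, the asserted formula. Part $(iii)$ is symmetric: the composite
\[
W\otimes W'\otimes Z\xrightarrow{\mathrm{id}_W\otimes\check R_{W',Z}}W\otimes Z\otimes W'\xrightarrow{\check R_{W,Z}\otimes\mathrm{id}_{W'}}Z\otimes W\otimes W'
\]
is an isomorphism in $M(R,\pi)$ equal to $\check R_{W,Z}^{(12)}\check R_{W',Z}^{(23)}$, giving the $R$-matrix for $W\otimes W'$, $Z$.

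The only genuine verifications, and what I expect to be the main (though mild) obstacle, are the bookkeeping identifications: that the tensor product in $M(R,\pi)$ of an $R$-matrix with an identity is computed componentwise so that on underlying spaces it coincides with the superscript-$(ij)$ operator, and that the suppressed associativity isomorphisms of $\operatorname{Vect}_k(\pi)$ act as identities on underlying tensor products and so do not disturb these identifications. Granting the already-established facts that $M(R,\pi)$ is monoidal and that $t_u$ is a monoidal endofunctor acting trivially on morphisms, no further $RLL$ computation is required: invertibility of each composite is automatic, being a composite of isomorphisms, which is precisely the extra condition distinguishing an $R$-matrix from a mere morphism of $\pi$-graded vector spaces.
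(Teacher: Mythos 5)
Your proof is correct. The paper states this Proposition without proof, and your categorical argument --- reading an $R$-matrix as nothing more than an isomorphism $W\otimes Z\to Z\otimes W$ in $M(R,\pi)$, then composing and tensoring isomorphisms, and for $(i)$ substituting $z\mapsto z+u$ in the intertwining relation (equivalently, applying the monoidal endofunctor $t_u$, which is the identity on morphisms) --- is precisely what the definitions in Section~3 are set up to make immediate; your identification of the two composites with $\check R_{W,Z'}^{(23)}\check R_{W,Z}^{(12)}$ and $\check R_{W,Z}^{(12)}\check R_{W',Z}^{(23)}$ is also correct. The only point you lean on that the paper likewise leaves implicit is that the tensor product of a morphism of $M(R,\pi)$ with an identity morphism is again a morphism of $M(R,\pi)$ (part of the assertion that $M(R,\pi)$ is monoidal); this is a one-line check, since $f\otimes\mathrm{id}$ and the extra $L$-operator factor act on disjoint tensor slots and hence commute, so your appeal to it is legitimate.
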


\subsection{Partial traces and transfer matrices}

The partial trace over $V$ is the map
\[
 \operatorname{tr}_V\colon \
 \operatorname{Hom}_{\operatorname{Vect}_k(\pi)}(V\otimes W,W\otimes
 V) \to \Gamma(\pi,\operatorname{\underline{End}} W)
\]
defined as follows.

For $f\in \operatorname{Hom}(V\otimes W,W\otimes V)$
and $\alpha\in\pi(a,b),\gamma\in\pi(a,a)$, let $f(\alpha,\gamma)$
be the component of $f$ mapping
\[
 f(\alpha,\gamma)\colon \ V_\alpha\otimes W_{\alpha\gamma\alpha^{-1}}
 \to W_{\gamma}\otimes V_\alpha.
\]
Define
\[
 \operatorname{tr}_{V_\alpha}f (\alpha,\gamma)=\sum_i(\mathrm{id}\otimes e_i^*)
 f(\alpha,\gamma) (e_i\otimes
 \mathrm{id})\in\operatorname{Hom}(W_{\alpha\gamma\alpha^{-1}},W_\gamma).
\]
for any basis $e_i$ of $V_\alpha$ and dual basis $e_i^*$ of the dual
vector space $(V_\alpha)^*$.
\begin{Definition}
 The {\em partial trace} $\operatorname{tr}_V f\in
 \Gamma(\pi,\operatorname{\underline{End}}W)$ of
 $f\in \operatorname{Hom}_{\operatorname{Vect}_\pi} (V\otimes
 W,W\otimes V)$ over $V$ is the section
 \[
 \operatorname{tr}_V f\colon \ \alpha\mapsto\oplus_{\gamma\in\pi(a,a)}
 \operatorname{tr}_{V_\alpha}f(\alpha,\gamma)
 \in(\operatorname{\underline{End}}W)_\alpha.
 \]
\end{Definition}
\begin{Example}\label{e-4} Let $W=\mathbf 1$ be the tensor unit, with
 nonzero components $W_a=k$, indexed by identity arrows $a\in A$.
 For $\alpha\in\pi(a,b)$, we have
 $(\operatorname{\underline{End}}
 W)_\alpha=\operatorname{Hom}_k(W_a,W_b)=k$, see Example~\ref{e-2}.

The convolution algebra
$\Gamma(\pi,\operatorname{\underline{End}}\mathbf 1)$ is the
extension of scalars of the convolution ring of~$\pi$.
The partial trace of the identity $V\otimes \mathbf 1\cong V\to V\cong \mathbf 1\otimes V$ is
\[
 \operatorname{tr}_V(\mathrm{id})\colon \ \alpha\mapsto \operatorname{dim}V_\alpha,
\]
which is the (image in $k$ of the) character ch${}_V$ of $V$.
\end{Example}

\begin{Lemma}\label{l-JB}\hfill
 \begin{enumerate}\itemsep=0pt
 \item[(i)] If $\varphi\colon V\to V'$ is an isomorphism of
 $\pi$-graded vector spaces then
 \[
 \operatorname{tr}_{V'}\big((\mathrm{id}\otimes\varphi)
 f\big(\varphi^{-1}\otimes\mathrm{id}\big)\big)=\operatorname{tr}_{V}f.
 \]
 \item[(ii)] Let $f_i\in\operatorname{Hom}(V_i\otimes W,W\otimes V_i)$,
 $i=1,2$, be homomorphisms of $\pi$-graded vector spaces and let
 $f_1^{(12)}f_2^{(23)}$ be the composition
 \[
 V_1\otimes V_2\otimes W\xrightarrow{\mathrm{id}\otimes
 f_2}V_1\otimes W\otimes
 V_2\xrightarrow{f_1\otimes\mathrm{id}}W\otimes V_1\otimes V_2.
 \]
 Then
 \[
 \operatorname{tr}_{V_1\otimes V_2}f_1^{(12)}f_2^{(23)}
 =\operatorname{tr}_{V_1}f_1 \operatorname{tr}_{V_2}f_2.
 \]
 \end{enumerate}
\end{Lemma}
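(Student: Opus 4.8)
The plan is to prove both statements by direct computation with the explicit definitions of the partial trace and of the product in the convolution algebra $\Gamma(\pi,\operatorname{\underline{End}}W)$, reducing everything to standard facts about traces of linear maps between finite-dimensional vector spaces. For part (i), I would start from the definition of $\operatorname{tr}_{V'}$ applied to $g=(\mathrm{id}\otimes\varphi)f(\varphi^{-1}\otimes\mathrm{id})$. Fixing $\alpha\in\pi(a,b)$ and $\gamma\in\pi(a,a)$, the relevant component $g(\alpha,\gamma)\colon V'_\alpha\otimes W_{\alpha\gamma\alpha^{-1}}\to W_\gamma\otimes V'_\alpha$ factors as $(\mathrm{id}_{W_\gamma}\otimes\varphi_\alpha)\,f(\alpha,\gamma)\,(\varphi_\alpha^{-1}\otimes\mathrm{id})$, where $\varphi_\alpha\colon V_\alpha\to V'_\alpha$ is the degree-$\alpha$ component of $\varphi$. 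Choosing a basis $e_i$ of $V_\alpha$ and transporting it to the basis $\varphi_\alpha(e_i)$ of $V'_\alpha$ (with dual basis $(\varphi_\alpha^{-1})^*e_i^*$), the scalars $\langle e_j^*,\varphi_\alpha^{-1}\varphi_\alpha e_i\rangle=\delta_{ij}$ collapse, and the formula $\operatorname{tr}_{V'_\alpha}g(\alpha,\gamma)=\operatorname{tr}_{V_\alpha}f(\alpha,\gamma)$ falls out. This is just the basis-independence of the trace dressed up in the groupoid grading, so it presents no real obstacle.

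For part (ii), the heart of the matter is to match the partial trace over the tensor product $V_1\otimes V_2$ with the convolution product of the two individual partial traces. I would fix $\alpha\in\pi(a,b)$ and decompose it as a composable pair $\alpha=\alpha_1\circ\alpha_2$ appearing in the tensor-product grading $(V_1\otimes V_2)_\alpha=\oplus_{\alpha_1\circ\alpha_2=\alpha}(V_1)_{\alpha_2}\otimes(V_2)_{\alpha_1}$, and then trace over each factor separately. The key bookkeeping step is to track how the conjugating arrow $\gamma\in\pi(a,a)$ interacts with the decomposition: the inner $f_2^{(23)}$ acts on $W$ in degree $(\alpha_1\alpha_2)\gamma(\alpha_1\alpha_2)^{-1}$, producing an intermediate $W$-degree $\alpha_1\gamma'\alpha_1^{-1}$ with $\gamma'=\alpha_2\gamma\alpha_2^{-1}\in\pi(s(\alpha_1),s(\alpha_1))$, after which $f_1^{(12)}$ lands in $W_\gamma$. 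Writing out $\operatorname{tr}_{V_1\otimes V_2}\big(f_1^{(12)}f_2^{(23)}\big)(\alpha,\gamma)$ using a product basis $e_i\otimes e_j'$ of $(V_1)_{\alpha_2}\otimes(V_2)_{\alpha_1}$, the trace splits along the two tensor factors by the multiplicativity of the trace, giving $\operatorname{tr}_{(V_1)_{\alpha_2}}f_1(\alpha_2,\gamma')\circ\operatorname{tr}_{(V_2)_{\alpha_1}}f_2(\alpha_1,\gamma)$.

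The final step is to recognize that summing this over all factorizations $\alpha_1\circ\alpha_2=\alpha$ and over the intermediate conjugating arrows is exactly the convolution product in $\Gamma(\pi,\operatorname{\underline{End}}W)$ as defined in Section~\ref{ss-cac}, where the product of sections is $(u*v)(\alpha)=\sum_{\beta\circ\alpha'=\alpha}u(\alpha')v(\beta)$ with the $\pi$-graded multiplication of $\operatorname{\underline{End}}W$ supplying the composition of the two partial-trace endomorphisms. I expect \emph{this identification} — confirming that the arrow-composition and the conjugation-by-$\alpha_2$ that appear naturally in the partial trace agree on the nose with the grading conventions built into $(\operatorname{\underline{End}}W)_\alpha=\oplus_{\gamma}\operatorname{Hom}_k(W_{\alpha\gamma\alpha^{-1}},W_\gamma)$ and its product from Example~\ref{e-1} — to be the main obstacle, since it is purely a matter of verifying that all the domain and codomain degrees line up correctly; once the indices are shown to match, the equality of the two sides is forced.
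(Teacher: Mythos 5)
Your proposal is correct and follows essentially the same route as the paper's proof: part (i) is reduced to the conjugation-invariance of the trace on each graded component, and part (ii) is proved by decomposing $(V_1\otimes V_2)_\gamma$ into the summands $V_{1\alpha}\otimes V_{2\beta}$ with $\beta\circ\alpha=\gamma$, factoring the relevant component of $f_1^{(12)}f_2^{(23)}$ through the intermediate $W$-degree $\alpha\gamma\alpha^{-1}$, and splitting the trace along a product basis, which is then matched with the convolution product in $\Gamma(\pi,\operatorname{\underline{End}}W)$. The only blemish is a swapped pair of labels in your displayed composite --- it should read $\operatorname{tr}_{(V_1)_{\alpha_2}}f_1(\alpha_2,\gamma)\circ\operatorname{tr}_{(V_2)_{\alpha_1}}f_2(\alpha_1,\gamma')$ with $\gamma'=\alpha_2\gamma\alpha_2^{-1}$, since $f_1$ is applied last and lands in $W_\gamma$ --- which does not affect the validity of the argument.
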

\begin{proof}
Recall that a morphism $\varphi$ is a collection of linear maps
$\varphi_\alpha\colon V_\alpha\to V'_\alpha$, so (i) is the
standard property of the trace on each $V_\alpha$.

As for (ii) to compute $\operatorname{tr}_{V_1\otimes V_2}$ we need to
select for each $\gamma\in\pi(a,b)$ and $\mu\in\pi(b,b)$ the component
$g(\alpha)$ of $g=f_1^{(12)}f_2^{(23)}$ sending
$(V_1\otimes V_2)_\gamma\otimes W_\mu$ to
$W_{\mu'}\otimes (V_1\otimes V_2)_\gamma$ with
$\mu'=\gamma^{-1}\circ\mu\circ\gamma$. A basis of
$(V_1\otimes V_2)_\gamma$ is given by choosing a basis of each
component $V_{1\alpha}\otimes V_{2\beta}$ with $\gamma=\beta\alpha$
with $\alpha\in\pi(a,c)$ and $\beta\in\pi(c,b)$, Thus the trace is
non-trivial on the components of $f$ mapping
$V_{1\alpha}\otimes V_{2\beta}\otimes W_\mu$ to
$W_{\mu'}\otimes V_{1\alpha}\otimes V_{2\beta}$. These components
factor as
\[
 V_{1\alpha}\otimes V_{2\beta}\otimes W_\mu
 \xrightarrow{\mathrm{id}\otimes f_2(\beta)}
 V_{1\alpha}\otimes W_{\beta^{-1}\circ\mu\circ\beta}\otimes
 V_{2\beta}\xrightarrow{f_1(\alpha)\otimes\mathrm{id}}
 W_{\mu'}\otimes V_{1\alpha}\otimes V_{2\beta}.
\]
The claim follows by taking the tensor product of bases of $V_{1\alpha}$
and $V_{2\beta}$.
\end{proof}

\begin{Corollary}\label{c-Transfer} Suppose that
 $\check R_{V_iV_j}\in\operatorname{Hom}(V_i\otimes V_j,V_j\otimes
 V_i)$, $(1\leq i<j\leq 3)$ are a solution of the Yang--Baxter
 equation
 \[
 \check R_{V_1V_2}^{(23)}\check R_{V_1V_3}^{(12)}\check
 R_{V_2V_3}^{(23)} = \check R_{V_2V_3}^{(12)}\check
 R_{V_1V_3}^{(23)}\check R_{V_1V_2}^{(12)}
 \]
 with invertible $\check R_{V_1V_2}$.
 Then the transfer matrices
 \[
 T_i=\operatorname{tr}_{V_i}\check R_{V_iV_3}\in
 \Gamma(\pi,\operatorname{\underline{End}}(V_3)), \qquad i=1,2.
 \]
 commute: $T_1T_2=T_2T_1$.
\end{Corollary}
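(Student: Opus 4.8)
The plan is to deduce the commutativity of $T_1$ and $T_2$ from the two properties of partial traces recorded in Lemma~\ref{l-JB}, using the Yang--Baxter equation to interchange the two orders of composition. I would work throughout in $\operatorname{Hom}$-spaces of $\pi$-graded vector spaces with $W=V_3$ playing the role of the ``quantum space''.

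First I would use the multiplicativity of the partial trace, Lemma~\ref{l-JB}(ii), to rewrite each product $T_iT_j$ of transfer matrices as a single partial trace over a tensor product. Applying part~(ii) with $f_1=\check R_{V_1V_3}$ and $f_2=\check R_{V_2V_3}$ gives $T_1T_2=\operatorname{tr}_{V_1\otimes V_2}\big(\check R_{V_1V_3}^{(12)}\check R_{V_2V_3}^{(23)}\big)$, and relabelling the two factors gives $T_2T_1=\operatorname{tr}_{V_2\otimes V_1}\big(\check R_{V_2V_3}^{(12)}\check R_{V_1V_3}^{(23)}\big)$. Writing $M_{12}=\check R_{V_1V_3}^{(12)}\check R_{V_2V_3}^{(23)}$ and $M_{21}=\check R_{V_2V_3}^{(12)}\check R_{V_1V_3}^{(23)}$, the goal is reduced to the identity $\operatorname{tr}_{V_1\otimes V_2}M_{12}=\operatorname{tr}_{V_2\otimes V_1}M_{21}$ in $\Gamma(\pi,\operatorname{\underline{End}}(V_3))$.

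The second step is to observe that the hypothesised Yang--Baxter equation is, after matching the source and target of each leg on $V_1\otimes V_2\otimes V_3$, precisely the identity $\check R_{V_1V_2}^{(23)}M_{12}=M_{21}\check R_{V_1V_2}^{(12)}$. Since $\check R_{V_1V_2}$ is assumed invertible, this can be rewritten as a conjugation $M_{21}=\check R_{V_1V_2}^{(23)}M_{12}\big(\check R_{V_1V_2}^{(12)}\big)^{-1}$. Finally I would invoke the conjugation invariance of the partial trace, Lemma~\ref{l-JB}(i), with the isomorphism of $\pi$-graded vector spaces $\varphi=\check R_{V_1V_2}\colon V_1\otimes V_2\to V_2\otimes V_1$. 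Taking $V=V_1\otimes V_2$, $V'=V_2\otimes V_1$, $W=V_3$ and $f=M_{12}$, part~(i) yields $\operatorname{tr}_{V_2\otimes V_1}\big((\mathrm{id}\otimes\varphi)M_{12}(\varphi^{-1}\otimes\mathrm{id})\big)=\operatorname{tr}_{V_1\otimes V_2}M_{12}$; but the conjugated operator equals $M_{21}$ by the previous step. Combining the three steps gives $T_1T_2=\operatorname{tr}_{V_1\otimes V_2}M_{12}=\operatorname{tr}_{V_2\otimes V_1}M_{21}=T_2T_1$.

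The main point to be careful about is the bookkeeping of tensor-leg positions in the groupoid-graded setting, where the flip is not a morphism: one must check that under the identification $f\mapsto(\mathrm{id}_{V_3}\otimes\varphi)f(\varphi^{-1}\otimes\mathrm{id}_{V_3})$ the factor $\mathrm{id}_{V_3}\otimes\varphi$ acts exactly as $\check R_{V_1V_2}^{(23)}$ and $\varphi^{-1}\otimes\mathrm{id}_{V_3}$ as $\big(\check R_{V_1V_2}^{(12)}\big)^{-1}$, so that Lemma~\ref{l-JB}(i) indeed produces $M_{21}$. It is also essential that $\varphi$ be an isomorphism of $\pi$-graded vector spaces, which holds because an $R$-matrix is by definition an isomorphism in $M(R,\pi)$, hence in $\operatorname{Vect}_k(\pi)$; this is exactly where the invertibility hypothesis on $\check R_{V_1V_2}$ enters.
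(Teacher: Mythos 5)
Your argument is correct and is essentially the paper's own proof: both rewrite the Yang--Baxter equation as a conjugation by $\check R_{V_1V_2}$ (you take $\varphi=\check R_{V_1V_2}$ where the paper takes $\varphi=\check R_{V_1V_2}^{-1}$, which only reverses the direction of the identity) and then combine Lemma~\ref{l-JB}(i) and (ii), merely in the opposite order. No substantive difference.
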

\begin{proof} We can write the Yang--Baxter equation as
 \[
 (\mathrm{id}\otimes\varphi)\check R_{V_2V_3}^{(12)}\check
 R_{V_1V_3}^{(23)}\big(\varphi^{-1}\otimes \mathrm{id}\big) =
 \check R_{V_1V_3}^{(12)}\check R_{V_2V_3}^{(23)}
 \]
 with $\varphi=\check R_{V_1V_2}^{-1}$.
 By Lemma~\ref{l-JB} (i) we deduce that
 \[
 \operatorname{tr}_{V_2\otimes V_1}\check R_{V_2V_3}^{(12)}\check
 R_{V_1V_3}^{(23)}=\operatorname{tr}_{V_1\otimes V_2}
 \check R_{V_1V_3}^{(12)}\check R_{V_2V_3}^{(23)}.
 \]
 The claim then follows from Lemma~\ref{l-JB}(ii).
\end{proof}
\begin{Remark}
 For $W=\mathbf 1$ the $R$-matrix $R_{V,\mathbf 1}$ is the tautological
 map $V\otimes \mathbf 1\to \mathbf 1\otimes V$ of Example~\ref{e-4}.
 Thus the transfer matrix generalizes the notion of character.
\end{Remark}
\begin{Remark} In particular Corollary \ref{c-Transfer} applies to the
 case of the category $M(R,\pi)$: for each object $(W,L_W)$ we can
 interpret the {\it RLL} relations of Section \ref{ss-RLL} as a~Yang--Baxter equation with $V_1=V(z)$, $V_2=V(w)$ (see Example~\ref{e-0}), $V_3=W$ and $\check R_{VW}=L_W(z)$. One gets the
 basic statement of the quantum inverse scattering method that the
 transfer matrices $\operatorname{tr}_VL_W(z)$ commute for different
 values of the spectral parameter $z$. In that language Corollary is
 the generalization of this statement to the case of arbitrary
 ``auxiliary spaces'' $V_1$, $V_2$.
\end{Remark}

\subsection{Action groupoids and dynamical Yang--Baxter equation}
If $\pi=A\rtimes G$ is an action groupoid, $R$-matrices for
$\pi$-graded vector spaces are expressed in terms of the graded
components as dynamical $R$-matrices. Then the tensor product
of $\pi$-graded modules is
\[
 (V\otimes W)_{(a,g)}=\sum_{h\in G} V_{(a,h)}\otimes W_{(ah,h^{-1}g)}.
\]
Let $\check R(z)$ be a solution of
the Yang--Baxter equation \eqref{e-YB} and let
\[
 \check R(z,a)\in\oplus_{g\in G}\operatorname{End}_k((V\otimes
 V)_{(a,g)})
\]
be the restriction of $\check R(z)$ to the graded
components with fixed $a\in A$. Then the Yang--Baxter equation can be
written as
\begin{gather*}
\check R^{(23)}\big(z-w,ah^{(1)}\big)
 \check R^{(12)}(z,a)\check R^{(23)}\big(w,ah^{(1)}\big) = \check R^{(12)}(w,a)\check R^{(23)}\big(z,ah^{(1)}\big)\check R^{(12)}(z-w,a).
\end{gather*}
Here we use the ``dynamical'' notation with the placeholder $h^{(i)}$:
\[
 \check R^{(23)}\big(ah^{(1)}\big)(u\otimes v\otimes w)= u\otimes \check
 R(ag)(v\otimes w) \qquad\text{if} \quad u\in V_{(a,g)}.
\]
If we compose on the left with the product
$p^{(23)}p^{(13)}p^{(12)}=p^{(12)}p^{(13)}p^{(23)}$ of
flips $p\colon v\otimes w\mapsto w\otimes v$ we get the YBE for $R=
p\circ \check R$ in the form
\begin{gather*}
 R^{(12)}\big(z-w,ah^{(3)}\big) R^{(13)}(z,a) R^{(23)}\big(w,ah^{(1)}\big)
=R^{(23)}(w,a)R^{(13)}\big(z,ah^{(2)}\big)R^{(12)}(z-w,a).
\end{gather*}

\subsection{Transfer matrices in the case of action groupoids}\label{ss-3.7}
In the case of action groupoid $\pi=A\rtimes G$ we can identify
convolution algebras with coefficients in $\pi$-graded endomorphisms
with algebras of difference operators (or discrete connections) acting
on the sections of sheaves over~$A$. Let $G(a)=\{g\in G\,|\, ag=a\}$
denote the stabilizer subgroup of an object $a\in A$ and for
$W\in \operatorname{Vect}_k(\pi)$ let
$W_{G(a)}=\oplus_{g\in G(a)}W_{(a,g)}$. Let $\Gamma(A,W)$ be the space
of maps $\psi\colon A\to\sqcup_{a\in A}W_{G(a)}$ such that
$\psi(a)\in W_{G(a)}$ for all $a\in A$. Then $\Gamma(A,W)$ is
naturally a module over $\Gamma(\pi,\operatorname{\underline{End}}W)$
and the transfer matrices can be realized as linear operators on
$\Gamma(A,W)$.

Explicitly, we have
$(\operatorname{\underline{End}}W)_{(a,g)}=\oplus_{h\in G(a)}
\operatorname{Hom}(W_{(ag,g^{-1}hg)},W_{(a,h)})$. For
$\psi\in\Gamma(A,W)$ let $(t_g\psi)(a)=\psi(ag)\in W_{G(ag)}$.
Then $f\in\Gamma(\pi,\operatorname{\underline{End}}W)$ acts on $\psi\in\Gamma(A,W)$
as
\[
 (f\psi)(a)=\sum_{g\in G}f(a,g)(t_g\psi)(a).
\]
Let $\check R_{UW}[a,g]$ be the component
$U_{(a,g)}\otimes W_{G(ag)}\to W_{G(a)}\otimes U_{(a,g)}$ of a dynamical
$R$-matrix $U\otimes W\to W\otimes U$. Then
$r_g(a)=\operatorname{tr}_{U_{(a,g)}}\check R_{UW}[a,g]$ maps $W_{G(ga)}$ to
$W_{G(a)}$. The transfer matrix is
\[
 \operatorname{tr}_U\check R_{UW} =\sum_{g\in G}r_g t_g,
\]
where $r_g$ is understood as a multiplication operator.

\section{Elliptic quantum groups}\label{s-4}
As a class of examples of the above construction let us work out the
case of the dynamical $R$-matrix defining the elliptic quantum group
in the $\mathfrak{gl}_{n}$-case, see \cite{FelderICM1995}.
Here $k=\mathbb C$.
\subsection{Action groupoids of the weight lattice} \label{s-4.1} Let
$\mathfrak h\cong \mathbb C^{n}$ be the Lie subalgebra of diagonal
matrices in $\mathfrak {gl}_{n}$ and $\mathfrak h_0\subset \mathfrak h$
the subalgebra of traceless diagonal matrices. The weight lattice is
the lattice $P=\sum_{i=1}^{n}\mathbb Z\epsilon_i\subset \mathfrak h^*$
spanned by the coordinate functions $\epsilon_i\colon x\mapsto x_i$. It acts
on $\mathfrak h_0^*=\mathfrak h^*/\mathbb C(1,\dots,1)$ by
translations on $\mathfrak h^*$ composed with the canonical projection
$\mathfrak h^*\to \mathfrak h^*_0$. For each orbit $O_b=b+P$,
$b\in\mathfrak h_0^*$ we have a groupoid $O_b\rtimes P$, consisting of
pairs $(a,\mu)\in O_b\times P$ with composition
$(a_1,\mu_1)\circ(a_2,\mu_2)=(a_2,\mu_1+\mu_2)$, defined if
$a_1=a_2+\mu_2$.

We consider the following standard subsets of $\mathfrak h_0^*$:
\begin{itemize}\itemsep=0pt
\item
 The set of $\mathfrak{sl}_n$-dominant weights
 $P_+=\{\lambda\in \mathbb Z^{n}/\mathbb Z(1,\dots,1)
 \,|\, \lambda_1\geq\cdots\geq\lambda_n\}$.
\item
 The set of regular dominant weights
 $P_{++}=\{\lambda\in P_+\,|\, \lambda_1>\cdots>\lambda_n\}$.
\item
 The set of dominant affine weights
 $P_{+}^r=\{\lambda\in P_{+}\,|\, \lambda_1-\lambda_n\leq r\}$, of level
 $r\in\mathbb Z_{\geq 0}$.
\item
 The set of regular dominant affine weights
 $P_{++}^r=\{\lambda\in P_{++}\,|\, \lambda_1-\lambda_n<r\}$, of level
 $r\in\mathbb Z_{\geq n}$.
\end{itemize}
We have a bijection $P_+^r\to P_+^{r+n}$ given by $\lambda\mapsto \lambda +\rho$,
$\rho=(n-1,\dots,1,0)$.

\subsection[Dynamical R-matrix]{Dynamical $\boldsymbol{R}$-matrix}
Fix two complex numbers $\tau$, $\gamma$ such that $\operatorname{Im}\tau>0$
and $\gamma\not\in\mathbb Z+\tau \mathbb Z$. Let
\[
 \theta(z,\tau)=-\sum_{n\in\mathbb Z}
 {\rm e}^{{\rm i}\pi (n+\frac12)^2\tau+2\pi {\rm i} (n+\frac12) (z+\frac12)}
\]
be the odd Jacobi theta function and
$[z]=\theta(\gamma z,\tau)/(\gamma\theta'(0,\tau))$ is normalized to
have derivative~1 at $z=0$. The function $z\mapsto[z]$ of one complex
variable is an odd entire function with first order zeros on the
lattice
\[
 \Lambda=\textstyle \mathbb Z\frac1\gamma+\mathbb Z\frac{\tau}{\gamma}.
\]
The defining representation $\bar V=\mathbb C^{n}$ of $\mathfrak{gl}_{n}$
has a weight decomposition
$\bar V=\oplus_{i=1}^{n} \bar V_{\epsilon_i}$ where
$\bar V_{\epsilon_i}=\mathbb C e_i$ is the span of the $i$-th standard
basis vector.

Let $E_{ij}$ be the ${n}\times {n}$ matrix such that $E_{ij}e_k=\delta_{jk}e_i$
for all $k\in\{1,\dots,{n}\}$.
The (unnormalized) elliptic dynamical $R$-matrix with spectral parameter
$z\in\mathbb C$ is
\begin{gather} \check R(z,a) =\sum_{i=1}^{n}
 E_{ii}\otimes E_{ii}
 -\sum_{i\neq j=1}^{n}\frac{[a_i-a_j+1][z]}{[a_i-a_j][1-z]}
 E_{ij}\otimes E_{ji}\notag
 \\
\hphantom{\check R(z,a) =}{}
 +\sum_{i\neq j=1}^{n}
 \frac{[a_i-a_j+z][1]}{[a_i-a_j][1-z]}E_{ii}\otimes E_{jj}.\label{e-R}
\end{gather}
It is a meromorphic function of $z\in\mathbb C$, $a\in\mathbb C^{n}$ and
solves the dynamical Yang--Baxter equation in the additive form
\begin{gather*}
 \check R\big(z-w,a+h^{(1)}\big)^{(23)} \check R(z,a)^{(12)} \check R\big(w,a+h^{(1)}\big)^{(23)} \\
\qquad{}= \check R(w,a)^{(12)} \check R\big(z,a+h^{(1)}\big)^{(23)} \check R(z-w,a)^{(12)},
\end{gather*}
and the inversion relation
\[
 \check R(z,a)\check R(-z,a) = \mathrm{id}_{\mathbb C^{n}\otimes\mathbb C^{n}},
\]
valid for generic $z$, $a$.
The relation to the $R$ matrix presented in~\cite{FelderICM1995} is
$\check R(z,a)= PR(z,\lambda)$ where $\lambda=\gamma a$.
By restricting $a$ to take values in an orbit~$O_b$
we can construct $R$-matrices acting on groupoid-graded vector
spaces. To do this we need to avoid the poles on the hyperplanes
$a_i-a_j\equiv 0\mod\Lambda$, $(i\neq j)$ of the $R$-matrix.

We consider the case where $r=1/\gamma$ is an integer $> n$.
Then for $a\in P$ the poles are at
\[
 a_i-a_j=m r,\qquad i\neq j\in\{1,\dots,{n}\},\qquad m\in\mathbb Z.
\]
Let $\Delta\subset P$ be the union of these hyperplanes. The affine
Weyl group $W_r$ is the group generated by orthogonal reflections at
these hyperplane in the Euclidean space
$P\otimes \mathbb R=\mathbb R^{n}$. It acts freely and transitively
on the complement of $\Delta\otimes\mathbb R$ and $P_{++}^r$ is the
set of weights in a connected component of the complement and is a
fundamental domain for the action of~$W_r$ on $P\smallsetminus\Delta$.

We distinguish two cases, named after the corresponding models of
statistical mechanics.
\begin{enumerate}\itemsep=0pt
\item{\bf Generalized SOS model.} Let $b\in \mathfrak h^*$, $\pi=O_b\rtimes P$,
 $V^b=\oplus_{\gamma\in\pi}V^b_\gamma$ with
 \[
 V^{b}_{(a,\mu)}=\begin{cases} \bar V_{\epsilon_i}=\mathbb Ce_i
 &\text{if $\mu=\epsilon_i$, $i=1,\dots,{n}$ and $a\in O_b$,}
 \\
 0,&\text{if $\mu\not\in\{\epsilon_1,\dots,\epsilon_n\}$.}
 \end{cases}
 \]
 If $b$ does not lie in $\Delta$ then $R(z)$ is a well-defined endomorphism
 of $V^b\otimes V^b$ and obeys the Yang--Baxter equation.
\item{\bf Generalized RSOS model.} Let $b=0$. The groupoid $\pi$ is the full
 subgroupoid of $O_0\rtimes P$ on $P^r_{++}$: it consists of pairs
 $(a,\mu)\in O_0\rtimes P$ such that both $a$ and $a+\mu$ lie in
 $P^{r}_{++}$. The non-zero components are
 \[
 V^{{\rm RSOS}}_{(a,\epsilon_i)}=\bar V_{\epsilon_i}=\mathbb C{e_i},\qquad
 a,a+\epsilon_i\in P^{r}_{++}.
 \]
\end{enumerate}
To define $\check R(z)$ in the RSOS case we view $V^{{\rm RSOS}}$
as an $O_0\rtimes P$-graded subspace of $V^{b=0}$ such that
$V^{{\rm RSOS}}_\gamma=0$ for $\gamma\not\in \pi$, see Section~\ref{ss-subg}.
\begin{Proposition}
 Let $V=V^{b=0}$ and let $V^{{\rm RSOS}}\subset V$ be viewed as an
 $O_0\rtimes P$-graded subspace of $V$. Then $\check R(z)$ is
 well-defined on $V^{{\rm RSOS}}\otimes V^{{\rm RSOS}}$ and
 maps $V^{{\rm RSOS}}\otimes V^{{\rm RSOS}}$ to itself.
\end{Proposition}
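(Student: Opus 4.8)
The plan is to make the action of $\check R(z,a)$ on each graded component fully explicit and then match the combinatorics of the alcove $P_{++}^r$ with the zeros of the theta function $[\,\cdot\,]$. First I would recall from Section~\ref{ss-tp} that, by \eqref{e-tensorproduct}, a basis of $(V^{b=0}\otimes V^{b=0})_{(a,\mu)}$ is indexed by factorizations $\mu=\epsilon_i+\epsilon_j$, the vector $e_i\otimes e_j$ sitting in the summand $V_{(a,\epsilon_i)}\otimes V_{(a+\epsilon_i,\epsilon_j)}$, that is, corresponding to the path $a\to a+\epsilon_i\to a+\epsilon_i+\epsilon_j$. Such a vector lies in $V^{{\rm RSOS}}\otimes V^{{\rm RSOS}}$ precisely when all three weights $a$, $a+\epsilon_i$, $a+\epsilon_i+\epsilon_j$ belong to $P_{++}^r$. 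Since $\check R(z,a)$ is a morphism of $\pi$-graded vector spaces it preserves the source $a$ and the total weight $\mu$, so it acts inside the (at most two-dimensional) span of $e_i\otimes e_j$ and $e_j\otimes e_i$. Reading off \eqref{e-R}, for $i=j$ one has $\check R(z,a)(e_i\otimes e_i)=e_i\otimes e_i$, while for $i\neq j$,
\[
 \check R(z,a)(e_i\otimes e_j)=\frac{[a_i-a_j+z][1]}{[a_i-a_j][1-z]}\,e_i\otimes e_j-\frac{[a_j-a_i+1][z]}{[a_j-a_i][1-z]}\,e_j\otimes e_i.
\]
For well-definedness I note that the source weight $a$ of any arrow in $\pi$ lies in $P_{++}^r$, so $a_i-a_j\in(-r,r)\smallsetminus\{0\}$ avoids the pole locus $a_i-a_j\in r\mathbb Z$; hence every coefficient above is a genuine meromorphic function of $z$ with no dynamical pole.

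For the invariance statement the diagonal term $e_i\otimes e_j$ retraces the original path and is automatically in $V^{{\rm RSOS}}\otimes V^{{\rm RSOS}}$. The only thing to check is the swapped term $e_j\otimes e_i$, which corresponds to the path $a\to a+\epsilon_j\to a+\epsilon_i+\epsilon_j$ through the new intermediate weight $a+\epsilon_j$. Its endpoints $a$ and $a+\epsilon_i+\epsilon_j$ already lie in $P_{++}^r$, so this term lands in $V^{{\rm RSOS}}\otimes V^{{\rm RSOS}}$ as soon as $a+\epsilon_j\in P_{++}^r$. The whole proposition therefore reduces to showing that \emph{whenever $a+\epsilon_j\notin P_{++}^r$ the coefficient $-[a_j-a_i+1][z]/([a_j-a_i][1-z])$ vanishes}. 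I would next enumerate how $a+\epsilon_j$ can fail to be regular dominant affine: since adding $\epsilon_j$ changes only the $j$-th coordinate, either (finite wall) $1<j$ and $a_{j-1}-a_j=1$, so strict decrease fails between positions $j-1$ and $j$, or (affine wall) $j=1$ and $a_1-a_n=r-1$, so the spread reaches $r$.

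The crux, and the main obstacle, is to match each wall-crossing with a zero of $[\,\cdot\,]$, and here the surviving hypothesis $a+\epsilon_i+\epsilon_j\in P_{++}^r$ does the work. In the finite-wall case, comparing coordinates $j-1$ and $j$ of $a+\epsilon_i+\epsilon_j$ forces $i=j-1$ (otherwise $a_{j-1}=a_j+1$ would already violate strict decrease), whence $a_i-a_j=1$ and the numerator $[a_j-a_i+1]=[0]=0$. In the affine-wall case, the affine bound on $a+\epsilon_i+\epsilon_1$ forces $i=n$ (otherwise the spread $(a_1+1)-a_n=r$ fails to be $<r$), whence $a_j-a_i=a_1-a_n=r-1$ and the numerator $[a_j-a_i+1]=[r]=0$, using that $r=1/\gamma\in\Lambda$ is a zero of $[\,\cdot\,]$. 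In either case the offending coefficient vanishes, so $\check R(z,a)$ maps $V^{{\rm RSOS}}\otimes V^{{\rm RSOS}}$ into itself; this is precisely the resonance phenomenon, and assembling the two cases completes the proof.
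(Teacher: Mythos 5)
Your proof is correct and follows essentially the same route as the paper's: well-definedness from $[a_i-a_j]\neq 0$ on $P_{++}^r$, reduction to the vanishing of the off-diagonal coefficient when the swapped intermediate weight leaves the alcove, and the case analysis (finite wall forcing $i=j-1$ giving $[0]=0$, affine wall forcing $i=n$ giving $[r]=0$). The only difference is the cosmetic choice of tracking the source component $e_i\otimes e_j$ rather than the target component, i.e., the roles of $i$ and $j$ are interchanged relative to the paper.
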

\begin{proof} While $\check R(z)$ is not defined on all vectors in
 $V\otimes V$, it is well-defined on
 $V^{{\rm RSOS}}\otimes V^{{\rm RSOS}}$ since by construction
 the denominators $[a_i-a_j]$ don't vanish for $a\in
 P_{++}^r$. Suppose $(a+\epsilon_j,\epsilon_i)$ and $(a,\epsilon_j)$
 are composable arrows in the subgroupoid $\pi$, meaning that
 $a$, $a+\epsilon_j$ and $a+\epsilon_i+\epsilon_j$ belong to $P_{++}^r$. Then $\check R(z)$
 maps $V_{(a,\epsilon_j)}\otimes V_{(a+\epsilon_j,\epsilon_i)}$ to itself
 if $i=j$ and to
 \[
V_{(a,\epsilon_j)} \otimes V_{(a+\epsilon_j,\epsilon_i)} \oplus
 V_{(a,\epsilon_i)} \otimes V_{(a+\epsilon_i,\epsilon_j)}
 \]
 if $i\neq j$. The first summand is indexed by a pair of arrows in the subgroupoids
 but the second is not if $a+\epsilon_i\not\in P_{++}^r$.
 Thus $\check R(z)$ preserves $V^{{\rm RSOS}}$ if and only if
 the component of the image of~$\check R(z)$ in
 $V_{(a,\epsilon_i)}\otimes V_{(a+\epsilon_i,\epsilon_j)}$
 for $a+\epsilon_i\not\in P^r_{++}$ vanishes.
 So we need to check the vanishing
 of the component
 \begin{equation}\label{e-5}
 -\frac{[a_i-a_j+1][z]}{[a_i-a_j][1-z]}E_{ij}\otimes E_{ji}\colon \
 V_{(a,\epsilon_j)}\otimes V_{(a+\epsilon_j,\epsilon_i)}\to
 V_{(a,\epsilon_i)}\otimes V_{(a+\epsilon_i,\epsilon_j)},\qquad i\neq j,
 \end{equation}
 of $\check R(z,a)$ in the case where $a+\epsilon_i\not\in P^r_{++}$ and
 $a,a+\epsilon_j,a+\epsilon_i+\epsilon_j\in P^{r}_{++}$. The condition for~$a$
 being in $P^r_{++}$ is
 $a_1>\cdots>a_n>a_1-r$. For
 $a\in P_{++}^r$, $a+\epsilon_i$ violates an inequality if and only
 if $i\geq 2$ and $a_{i-1}=a_i+1$ or $i=1$ and $a_n=a_1-r+1$. The
 condition that $a+\epsilon_i+\epsilon_j\in P^{r}_{++}$ implies that
 $j= i-1$ if $i\geq 2$ and $j=n$ if $i=1$. In both cases
 $a_j\equiv a_i+1\operatorname{mod}r$ and thus \eqref{e-5} vanishes.
\end{proof}

\begin{Corollary} The restriction of $\check R(z)$ to
 $V^{{\rm RSOS}}\otimes V^{{\rm RSOS}}$ obeys the dynamical Yang--Baxter
 equation.
\end{Corollary}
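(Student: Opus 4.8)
The plan is to obtain the dynamical Yang--Baxter equation on $V^{{\rm RSOS}}$ by \emph{restriction} of the meromorphic equation obeyed by $\check R(z,a)$ in the continuous variable $a$, using the preceding Proposition to guarantee both that the restriction is well-defined (no pole is encountered) and that it is closed (the composite operators preserve $V^{{\rm RSOS}}\otimes V^{{\rm RSOS}}\otimes V^{{\rm RSOS}}$). No new theta-function computation beyond the resonance identity already used in the Proposition should be required.

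First I would note that, since $\check R(z)$ maps $V^{{\rm RSOS}}\otimes V^{{\rm RSOS}}$ into itself, the operators $\check R(z)^{(12)}$ and $\check R(z)^{(23)}$ of \eqref{e-YB} restrict to endomorphisms of $V^{{\rm RSOS}}\otimes V^{{\rm RSOS}}\otimes V^{{\rm RSOS}}$. A homogeneous component of this triple product is indexed by a composable triple of arrows $a_0\to a_1\to a_2\to a_3$ of $\pi$, that is, by weights all lying in $P^{r}_{++}$; the factor $\check R(z)^{(12)}$ acts on the adjacent composable pair $a_0\to a_1\to a_2$ and $\check R(z)^{(23)}$ on $a_1\to a_2\to a_3$, and each preserves $V^{{\rm RSOS}}$ by the Proposition while fixing the remaining tensor factor. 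Hence both sides of \eqref{e-YB} map $V^{{\rm RSOS}}\otimes V^{{\rm RSOS}}\otimes V^{{\rm RSOS}}$ into itself.

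Next I would invoke the dynamical Yang--Baxter equation obeyed by the meromorphic $R$-matrix \eqref{e-R} as an identity of functions of $a\in\mathbb C^{n}$, and specialize it to a base point $a=a_0\in P^{r}_{++}$. The dynamical parameter feeding each $(12)$-factor is the source $a_0$, while that feeding each $(23)$-factor is shifted to the intermediate weight between the first two tensor slots; along an admissible configuration this weight again lies in $P^{r}_{++}$. Since for $a\in P^{r}_{++}$ one has $0<|a_i-a_j|<r$ for $i\neq j$, the denominators $[a_i-a_j]$ never vanish, and (for generic $z$, $w$) every factor on both sides is regular at $a=a_0$. A meromorphic identity regular at a point holds at that point, so the equation descends to the restricted operators, giving the desired equation on $V^{{\rm RSOS}}\otimes V^{{\rm RSOS}}\otimes V^{{\rm RSOS}}$.

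The only substantive point is the interplay of regularity and closure. A priori the specialized two sides could receive contributions from \emph{intermediate} states whose weights leave $P^{r}_{++}$, which would either meet a pole or break the match with the restricted equation. Both are excluded by the resonance vanishing of the Proposition: whenever applying $\check R$ to a composable pair of $\pi$ would create a term indexed by a weight outside $P^{r}_{++}$, the offending matrix element carries the factor $[a_i-a_j+1]$, which vanishes because of the congruence $a_j\equiv a_i+1\bmod r$ forced on such configurations. Thus the off-$P^{r}_{++}$ intermediate terms present in the unrestricted equation drop out, the two sides reduce to sums over composable configurations in $\pi$ alone, and the specialized identity is exactly the Yang--Baxter equation for the restricted $R$-matrix. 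This resonance vanishing, already the heart of the Proposition, is the main obstacle; once it is in hand the Corollary follows with essentially no further work.
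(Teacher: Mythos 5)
Your overall strategy is the one the paper intends: the Corollary is stated there with no proof at all, as an immediate consequence of the Proposition, and your plan --- specialize the meromorphic dynamical Yang--Baxter identity at a lattice point $a_0\in P^r_{++}$, use the Proposition for closure, and use the resonance vanishing to discard intermediate states that leave $P^{r}_{++}$ --- is the correct reading of that intention. The closure statement and the observation that $0<a_i-a_j<r$ ($i<j$) keeps the denominators $[a_i-a_j]$ away from their zeros on $P^r_{++}$ are both handled correctly.

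There is, however, one assertion that is not literally true and that hides the only delicate point: ``every factor on both sides is regular at $a=a_0$''. The $(23)$-factors are evaluated at the shifted parameter $a+h^{(1)}$, and \emph{precisely in the resonance situations} the shift $a_0+\epsilon_{i'}$ lands on the singular set $\Delta$ (for $n=2$ and $l=1$, for instance, the shifted parameter is $l-1=0$). An off-$P^r_{++}$ intermediate term is therefore a product of a factor with a simple zero (the resonant numerator $[a_i-a_j+1]$) and a subsequent factor that may have a simple pole at $a_0$; this is a priori a $0\cdot\infty$ ambiguity, and it is not vacuous: products such as $\frac{[l-1][z]}{[l][1-z]}\cdot\frac{[l][z-w]}{[l-1][1-z+w]}=\frac{[z][z-w]}{[1-z][1-z+w]}$ are regular and generically nonzero, so ``the offending matrix element vanishes'' does not by itself force the term to drop out. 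The gap is closed by an extra observation you would need to supply: the singular entries of $\check R\big(\cdot,a_0+\epsilon_{i'}\big)$ involve exactly the resonant pair of indices $\{i',j''\}$ with $a_{0,j''}\equiv a_{0,i'}+1 \bmod r$, and an intermediate state carrying $e_{i'}$ in the shifting slot together with $\{e_{i'},e_{j''}\}$ in the two slots acted on has total weight $2\epsilon_{i'}+\epsilon_{j''}$, whose endpoint $a_0+2\epsilon_{i'}+\epsilon_{j''}$ lies on $\Delta$ and hence outside $P^r_{++}$; such states therefore never occur in the expansion starting from a subgroupoid-graded input. With that in place every off-$P^r_{++}$ term really is a simple zero times a bounded quantity, it vanishes at $a_0$, and your specialization argument goes through.
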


\begin{Example}
 The case $n=2$ is the original RSOS model of
 \cite{AndrewsBaxterForrester1984}. In this case we have a bijection
 $O_0=\mathbb Z\times \mathbb Z/(\mathbb Z(1,1))\to \mathbb Z$
 sending the class of $(a_1,a_2)$, to $l=a_1-a_2$ under this
 identification $A=P^{r}_{++}=\{1,\dots,r-1\}$.

The dynamical $R$-matrix in the basis $e_1\otimes e_1$, $e_1\otimes e_2$, $e_2\otimes e_1$, $e_2\otimes e_2$ is
\begin{equation*}
\check{R}(z,a)=
\begin{pmatrix}
 1& 0 & 0 & 0 \\
 0&\dfrac{[l+z][1]}{[l][1-z]} &-\dfrac{[l+1][z]}{[l][1-z]} & 0 \vspace{1mm}\\
 0&-\dfrac{[l-1][z]}{[l][1-z]} &\dfrac{[l-z][1]}{[l][1-z]} & 0 \\
 0&0 &0 & 1
\end{pmatrix}
=
\begin{pmatrix}
 W_1& 0 & 0 & 0 \\
 0 &W_5 &W_4 & 0 \\
 0 &W_3 &W_6 & 0 \\
 0 &0 &0 & W_2
\end{pmatrix}.
\end{equation*}
The Boltzmann weights $W_j$ correspond to the local configurations of Fig.~\ref{drawing}.

\begin{figure}[h]\centering
	\begin{tikzpicture}
		\draw (0,0)--(2,0)--(2,2)--(0,2)--(0,0);
		\node at (0,2) [circle,fill,inner sep=1.5pt]{};
		\node at (2,0) [circle,fill,inner sep=1.5pt]{};
		\draw (0.5,0.3) node{\small $l+1$};
		\draw (1.5,0.3) node{\small $l+2$};
		\draw (1.5,1.7) node{\small $l+1$};
		\draw (0.3,1.7) node{\small $l$};
		\draw (1,-0.5) node{$W_1$};
		\draw (2.5+0,0)--(2.5+2,0)--(2.5+2,2)--(2.5+0,2)--(2.5+0,0);
		\node at (2.5+0,2) [circle,fill,inner sep=1.5pt]{};
		\node at (2.5+2,0) [circle,fill,inner sep=1.5pt]{};
		\draw (2.5+0.5,0.3) node{\small $l-1$};
		\draw (2.5+1.5,0.3) node{\small $l-2$};
		\draw (2.5+1.5,1.7) node{\small $l-1$};
		\draw (2.5+0.3,1.7) node{\small $l$};
		\draw (2.5+1,-0.5) node{$W_2$};
		\draw (5+0,0)--(5+2,0)--(5+2,2)--(5+0,2)--(5+0,0);
		\node at (5+0,2) [circle,fill,inner sep=1.5pt]{};
		\node at (5+2,0) [circle,fill,inner sep=1.5pt]{};
		\draw (5+0.5,0.3) node{\small $l+1$};
		\draw (5+1.7,0.3) node{\small $l$};
		\draw (5+1.5,1.7) node{\small $l-1$};
		\draw (5+0.3,1.7) node{\small $l$};
		\draw (5+1,-0.5) node{$W_3$};
		\draw (7.5+0,0)--(7.5+2,0)--(7.5+2,2)--(7.5+0,2)--(7.5+0,0);
		\node at (7.5+0,2) [circle,fill,inner sep=1.5pt]{};
		\node at (7.5+2,0) [circle,fill,inner sep=1.5pt]{};
		\draw (7.5+0.5,0.3) node{\small $l-1$};
		\draw (7.5+1.7,0.3) node{\small $l$};
		\draw (7.5+1.5,1.7) node{\small $l+1$};
		\draw (7.5+0.3,1.7) node{\small $l$};
		\draw (7.5+1,-0.5) node{$W_4$};

		\draw (10+0,0)--(10+2,0)--(10+2,2)--(10+0,2)--(10+0,0);
		\node at (10+0,2) [circle,fill,inner sep=1.5pt]{};
		\node at (10+2,0) [circle,fill,inner sep=1.5pt]{};
		\draw (10+0.5,0.3) node{\small $l+1$};
		\draw (10+1.7,0.3) node{\small $l$};
		\draw (10+1.5,1.7) node{\small $l+1$};
		\draw (10+0.3,1.7) node{\small $l$};
		\draw (10+1,-0.5) node{$W_5$};

		\draw (12.5+0,0)--(12.5+2,0)--(12.5+2,2)--(12.5+0,2)--(12.5+0,0);
		\node at (12.5+0,2) [circle,fill,inner sep=1.5pt]{};
		\node at (12.5+2,0) [circle,fill,inner sep=1.5pt]{};
		\draw (12.5+0.5,0.3) node{\small $l-1$};
		\draw (12.5+1.7,0.3) node{\small $l$};
		\draw (12.5+1.5,1.7) node{\small $l-1$};
		\draw (12.5+0.3,1.7) node{\small $l$};
		\draw (12.5+1,-0.5) node{$W_6$};
		
	\end{tikzpicture}
	\caption{Six possible types of Boltzmann weight in the eight-vertex SOS model.} \label{drawing}
\end{figure}
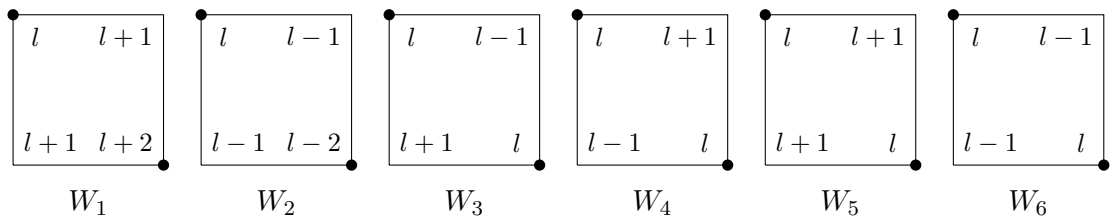

The hard hexagon model is part of the family of RSOS models with
$r=5$. In this case the allowed values of $l$ are $1$, $2$, $3$, $4$ and,
because of the identity $[u]=[5-u]$, $u\in\mathbb C$, the Boltzmann
weights are invariant under $l\mapsto 5-l$. We can map the model to a
lattice gas model on a square lattice. A configuration of the RSOS
model, i.e., a map from the vertices of a square lattice to
$\{1,2,3,4\}$ is mapped to a configuration of particles on the
lattice: there is a particle at the vertices with value~1 or~4 and no
particle at the vertices with value 2 or 3. To each configuration of
particles there correspond two RSOS configuration related by
$l\mapsto 5-l$ and thus with the same Boltzmann weight. The rule that
the value at neighbouring lattice sites differs by~1 translate to the
rule that no two particles of the lattice gas can sit at nearest
neighbouring sites. We can think of this model as a hard square model:
the squares whose vertices are the nearest neighbours of the positions
of the particles are required not to overlap (i.e., to have disjoint
interiors). The row-to-row transfer matrices of these models commute
among themselves and in particular with the transfer matrix with
spectral parameter $z=-1$, which is the transfer matrix of the hard
hexagon model. In this case $W_5$ vanishes for $l=1$ and $W_6$
vanishes for $l=4$, implying that in a configuration with non-zero
Boltzmann weight no two particles can sit at the endpoints of a NW-SE
diagonal. We can translate these rules by thinking of the particles as
midpoints of lattice hexagons which are not allowed to overlap.
 \end{Example}

\subsection{Characters} Here we consider the RSOS case and write $V$
instead of $V^{{\rm RSOS}}$ for the vector representation.

Out of the vector evaluation representation $V(z)$ one can construct
several new representations by the fusion or reproduction method
\cite{KirillovReshetikhin1993, KulishReshetikhinSklyanin1981}, which
admit pairwise $R$-matrices for generic values of the evaluation
parameters. It follows that their characters form a unital commutative
algebra of difference operators with integer coefficients. The unit
element is the character of the trivial representation.

Here are some examples of character calculations. The groupoid
$\pi_r(P)$ of the restricted elliptic quantum group with weight
lattice $P$ is the full subgroupoid of $O_0\rtimes P$ on the set
$A=P^{r}_{++}$. Its convolution ring is isomorphic to the subring
$\chi_AD_P(O_0)\chi_A$ of the crossed product
$D_P(O_0)=\mathbb Z^{O_0}\rtimes \mathbb ZP$, see Lemma~\ref{l-subgr}
and Proposition~\ref{p-difference}. The subring
$\mathbb Z P =\mathbb Z\big[t_1^{\pm1},\dots,t_n^{\pm1}\big]$ is the ring of
Laurent polynomials with generators $t_i=t_{\epsilon_i}$,
$i=1,\dots,n$.
\begin{itemize}\itemsep=0pt
\item The character of the trivial representation is the
 multiplication operator by the characteristic function of
 $A=P_{++}^r$:
 \[
 \operatorname{ch}_{\mathbf 1}=\chi_{A}.
 \]
\item The character of the vector representation $V(z)$.
 \[
 \operatorname{ch}_{V(z)}=\sum_{i=1}^{n} \chi_A t_i
 \chi_A=\sum_{i=1}^n\chi_{A\cap (A-\epsilon_i)}t_i.
 \]
\item The $R$-matrix \eqref{e-R} has a pole at $z=1$ and is not invertible
 at $z=-1$. We set
 $\check R_{{\rm reg}}(1,a)=\operatorname{res}_{z=1}\check R(z,a)$. Then we have
 an exact sequence (see Appendix~\ref{appendixA})
 \[
 \cdots \xrightarrow{\check R(-1)} V(z+1)\otimes V(z)
 \xrightarrow{\check R_{{\rm reg}}(1)}V(z)\otimes V(z+1)
 \xrightarrow{\check R(-1)}\cdots.
 \]
 The analogue of the symmetric square of the vector representation,
 is
 $S^2V(z)=\operatorname{Ker}\check R_{{\rm reg}}(1)$ $\cong
 \operatorname{Coker}\check R(-1)$. Its character can be computed
 from the explicit basis of Lemma \ref{l-A1}
 \[
 \operatorname{ch}_{S^2V(z)}= \sum_{i=1}^n\chi_{A\cap (A-2\epsilon_i)}t_i^2
 +\sum_{1\leq i<j \leq n} \chi_{A\cap (A-\epsilon_i)\cap (A-\epsilon_j)}t_it_j.
 \]
 Similarly, the second exterior power
$\bigwedge^2V(z) =\operatorname{Coker}\check R_{{\rm reg}}(1)\cong \operatorname{Ker}\check R(-1)$
 has character
 \[
 \operatorname{ch}_{\bigwedge^2V(z)}= \sum_{1\leq i<j \leq n} \chi_{A\cap
 (A-\epsilon_i-\epsilon_j)}t_it_j.
 \]
 From the exact sequence $0\to S^2V(z)\to V(z+1)\otimes V(z)\to
 \bigwedge^2V(z)\to 0$, follows the identity
 \[
 \operatorname{ch}_{V(z)}^2= \operatorname{ch}_{\bigwedge^2V(z)}+ \operatorname{ch}_{S^2V(z)}.
 \]
\item For $n=2$ we have the bijection
 $O_0=\mathbb Z\times \mathbb Z/\mathbb Z(1,1)\to \mathbb Z$ sending
 the class of $(a_1,a_2)$ to $a_1-a_2$. Under this identification,
 $A=[1,r-1]=\{1,2,\dots,r-1\}$. Then
 $V(z+p-1)\otimes \cdots \otimes V(z+1)\otimes V(z)$ has a
 subrepresentation
 $S^pV(z)=\cap_{i=1}^{p-1}\operatorname{Ker}\check R_{{\rm reg}}(1)^{(i,i+1)}$
 with character
 \[
 \operatorname{ch}_{S^pV(z)}=\chi_{[1,r-p-1]}t_1^{p}+\chi_{[2,r-p]}t_1^{p-1}t_2+
 \cdots +\chi_{[p+1,r-1]}t_2^{p},
 \]
 $p=0,\dots,r-2$. The characters
 $L_p=\operatorname{ch}_{S^pV(z)}$ obey the fusion rules
 \[
 L_pL_q=\sum_{s\equiv
 p+q\;\mathrm{mod}\;2}N_{pq}^su^{\frac{p+q-s}2}L_s,
 \]
 where $u$ is the central element $t_1t_2$ and
 \[
 N_{pq}^s=\begin{cases}1,&|p-q|\leq s
 \leq\min(p+q,2r-4-p-q),\\
 0,&\text{otherwise.}
 \end{cases}
 \]
 These are the famous fusion rules that first appeared in conformal
 field theory \cite{BelavinPolyakovZamolodchikov1984,
 Verlinde1988}. The algebra with generators $L_p$ and relations
 above is called the Verlinde algebra.
\end{itemize}
\subsection{Exterior powers}
The $k$th exterior power $\bigwedge^kV(z)$ is defined as the quotient
of $V(z+k-1)\otimes\cdots\otimes V(z+1)\otimes V(z)$ by the sum of the
images of $\check R_{{\rm reg}}(1)^{(j,j+1)}$ for $j=1,\dots,k-1$. Its character
is
\[
 \operatorname{ch}_{\bigwedge^kV(z)}= \chi_Ae_k(t_1,\dots,t_n)\chi_A,
\]
with $A=P_{++}^r$. Here
$e_k(t_1,\dots,t_n)=\sum_{1\leq i_1<\dots<i_k\leq n}t_{i_1}\cdots
t_{i_k}$ is the $k$th elementary symmetric polynomial. It follows
from the existence of $R$-matrices for pairs of exterior powers that
these characters commute.

The convolution ring $\mathbb Z[\pi_r(P)]\cong\chi_AD_P(O_0)\chi_A$
acts naturally on functions on $A=P^r_{++}$. The characters can be
simultaneously diagonalized.
\begin{Theorem}
 Let $q={\rm e}^{2\pi {\rm i}/r}$ and pick any $n$-th root $q^{1/n}$ of $q$. For
 each $\lambda\in A=P_{++}^r$ let $\psi_\lambda$ be the function on
 $A$ given by
 \[
 \psi_\lambda(a)=q^{-\frac1n\sum_ia_i\sum_i\lambda_i}\det\big(q^{\lambda_ia_j}\big).
 \]
 Then
 \[
 \operatorname{ch}_{\bigwedge^kV(z)}\psi_\lambda=e_k\big(q^{\bar\lambda_1},
 \dots,q^{\bar\lambda_n}\big)\psi_\lambda,\qquad \bar\lambda_i=\lambda_i-\frac1n\sum_j\lambda_j.
 \]
\end{Theorem}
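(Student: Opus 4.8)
The plan is to turn the statement into an explicit difference-operator identity and then verify it by expanding a determinant and controlling the boundary terms that the idempotents $\chi_A$ discard. Recall from the previous subsection that $\operatorname{ch}_{\bigwedge^kV(z)}=\chi_A\,e_k(t_1,\dots,t_n)\,\chi_A$ with $A=P_{++}^r$ and $t_i=t_{\epsilon_i}$, and that by Proposition~\ref{p-difference} and Section~\ref{ss-3.7} this element of the convolution ring acts on functions on $A$ by the difference operator $(t_{\epsilon_i}\psi)(a)=\psi(a+\epsilon_i)$. Since the $t_i$ commute, writing $\epsilon_S=\sum_{i\in S}\epsilon_i$ for $S\subseteq\{1,\dots,n\}$ we have $e_k(t_1,\dots,t_n)=\sum_{|S|=k}t_{\epsilon_S}$, and moving the right idempotent through the shifts exactly as in the computation of $\operatorname{ch}_{V(z)}$ gives $\operatorname{ch}_{\bigwedge^kV(z)}=\sum_{|S|=k}\chi_{A\cap(A-\epsilon_S)}\,t_{\epsilon_S}$. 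Hence for $a\in A$,
\[
\big(\operatorname{ch}_{\bigwedge^kV(z)}\psi_\lambda\big)(a)=\sum_{|S|=k,\ a+\epsilon_S\in A}\psi_\lambda(a+\epsilon_S),
\]
where we extend $\psi_\lambda$ to all of $O_0$ by the same formula; this is well defined modulo $(1,\dots,1)$, since that shift multiplies the determinant by $q^{\sum_i\lambda_i}$ and the prefactor by $q^{-\sum_i\lambda_i}$.

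First I would prove the unrestricted eigenvalue identity, in which the membership condition is dropped. Writing $a'=a+\epsilon_S$, so that $a'_j=a_j+1$ for $j\in S$ and $a'_j=a_j$ otherwise, the Leibniz expansion gives
\[
\det\big(q^{\lambda_ia'_j}\big)=\sum_\sigma\operatorname{sgn}(\sigma)\Big(\prod_jq^{\lambda_{\sigma(j)}a_j}\Big)\prod_{j\in S}q^{\lambda_{\sigma(j)}}.
\]
Summing over all $k$-subsets $S$ and using $\sum_{|S|=k}\prod_{j\in S}q^{\lambda_{\sigma(j)}}=e_k\big(q^{\lambda_1},\dots,q^{\lambda_n}\big)$, which is independent of $\sigma$ because $e_k$ is symmetric, yields $\sum_{|S|=k}\det\big(q^{\lambda_ia'_j}\big)=e_k\big(q^{\lambda_1},\dots,q^{\lambda_n}\big)\det\big(q^{\lambda_ia_j}\big)$. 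Since $\sum_ja'_j=\sum_ja_j+k$, the prefactor acquires a factor $q^{-k(\sum_i\lambda_i)/n}$, and homogeneity of $e_k$ of degree $k$ turns $q^{-k(\sum_i\lambda_i)/n}e_k\big(q^{\lambda_1},\dots,q^{\lambda_n}\big)$ into $e_k\big(q^{\bar\lambda_1},\dots,q^{\bar\lambda_n}\big)$. This gives $\sum_{|S|=k}\psi_\lambda(a+\epsilon_S)=e_k\big(q^{\bar\lambda_1},\dots,q^{\bar\lambda_n}\big)\psi_\lambda(a)$ for every $a$.

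The main obstacle, and the only place where the specific set $A=P_{++}^r$ and the relation $q^r=1$ enter, is to show that the discarded boundary terms vanish: for $a\in A$ and $|S|=k$ with $a+\epsilon_S\notin A$ one has $\psi_\lambda(a+\epsilon_S)=0$. Adding the $0/1$ vector $\epsilon_S$ to a strictly decreasing $a$ with $a_i-a_{i+1}\geq1$ keeps each adjacent difference $\geq0$, so $b=a+\epsilon_S$ is weakly decreasing; a failure of strict dominance therefore forces $b_i=b_{i+1}$ for some $i$, whence two columns of $\big(q^{\lambda_ib_j}\big)$ coincide and $\det=0$. If instead all finite inequalities stay strict, the only way to leave $A$ is through the affine wall, and $b_1-b_n\leq(a_1-a_n)+1\leq r$ forces $b_1-b_n=r$; since $q^r=1$ and the $\lambda_i$ are integers, $q^{\lambda_ib_1}=q^{\lambda_ib_n}$, so columns $1$ and $n$ coincide and again $\det=0$.

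Combining the two steps, the restricted sum equals the unrestricted one, and the eigenvalue identity follows. I expect the determinant expansion and prefactor bookkeeping to be routine; the crux is the case analysis above, which matches the two families of walls of the alcove $P_{++}^r$ to the two mechanisms — a repeated coordinate, respectively the $q^r=1$ periodicity — that make the determinant $\det\big(q^{\lambda_ia_j}\big)$ defining $\psi_\lambda$ degenerate.
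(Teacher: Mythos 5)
Your proposal is correct and follows essentially the same route as the paper: prove the unrestricted eigenvalue identity for symmetric polynomials in the shifts, observe that the shifts $t_{\epsilon_S}$ map $P_{++}^r$ into $P_+^r$, and show that $\psi_\lambda$ vanishes on $P_+^r\smallsetminus P_{++}^r$ (the paper deduces this from skew-symmetry and $r$-periodicity, you from the equality of two columns of the determinant, which is the same fact). The only difference is that you spell out the Leibniz expansion and the prefactor bookkeeping that the paper leaves implicit.
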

\begin{proof} Let us first ignore the characteristic functions $\chi_A$ and
 consider $\psi_\lambda$ as a function on the whole weight lattice.
 Then for any symmetric polynomial $P(t)$ in $t_1,\dots,t_n$,
 \[
 P(t)\psi_\lambda=P\big(q^{\bar\lambda_1}, \dots,q^{\bar\lambda_n}\big)\psi_\lambda.
 \]
 The function $\psi_\lambda(a)$ is a skew-symmetric $r$-periodic
 function of $a_1,\dots,a_n$. It thus vanishes on the hyperplanes
 $a_i-a_j\equiv 0\mod r$ and in particular on the walls
 $a_{i+1}-a_{i}=0$, $a_1-a_n=r$ forming the complement of
 $A=P_{++}^r$ in $\bar A=P_+^r$. Thus
 $\chi_A\psi_\lambda=\chi_{\bar A}\psi_\lambda$. The monomials
 $t_I=t_{i_1}\cdots t_{i_k}$ with $i_1<\dots<i_k$ map $P_{++}^r$ to
 $P_+^r$. Thus
 \begin{align*}
 \chi_A e_k(t)\chi_A\psi_\lambda
 &=\chi_Ae_k(t)\chi_{\bar A}\psi_\lambda =\sum_{I\subset\{1,\dots,n\},\, |I|=k}
 \chi_{A\cap{t_I^{-1}\bar A}}e_k(t)\psi_\lambda
 \\
 &=\sum_{I\subset\{1,\dots,n\},\, |I|=k}\chi_{A}e_k(t)\psi_\lambda=e_k(q^{\bar\lambda})\chi_A\psi_\lambda.\tag*{\qed}
 \end{align*}\renewcommand{\qed}{}
\end{proof}

\section{Quantum enveloping algebras at roots of unity}\label{s-5}
We are mainly concerned with dynamical $R$-matrices with non-trivial
dependence on the spectral parameter, but it is instructive
to consider the case of constant dynamical $R$-matrices arising from
the representation theory of semisimple Lie algebras and their quantum
versions. The main simplification in this case is that the category
of finite dimensional modules is braided, namely for each
pair of objects $V$, $W$ there is an isomorphism $\tau_{V,W}\colon V\otimes W
\to W\otimes V$, obeying compatibility conditions with the structure
of a monoidal category,
given by the evaluation of the universal $R$-matrix
composed with the permutation of factors. This property fails
for some pairs of objects in the case of quantum affine Lie algebras or
Yangians, which is the case when the $R$-matrices have a non-trivial
dependence on the spectral parameter.

We focus on the case of quantum groups at root of unity which is a toy
model for restricted models. Strictly speaking the above has to be
corrected in this case and one has to be more careful in the
definition of the category of finite dimensional modules. We consider
the semisimple quotient of the category of tilting modules
\cite[Section 11.3]{ChariPressleyBook1994},
\cite{AndersenParadowski1995,Sawin2006}. It is an abelian monoidal
$\mathbb C$-linear ribbon category $C_q(\mathfrak g)$ depending on a
simple Lie algebra $\mathfrak g$ and a primitive $\ell$-th root of
unity $q$. It has finitely many equivalence classes of simple
objects $L_\lambda$ labeled by dominant weights in a scaled Weyl
alcove $P_+^\ell(\mathfrak g)$, and any object is isomorphic to a direct sum of
simple modules. The alcove $P_+^\ell(\mathfrak g)$ is a finite subset of the cone
$P_+$ of dominant weight, bounded by a~hyperplane as in Section~\ref{s-4.1}.
See~\cite{Sawin2006} for a description in the most general case of a~Lie
algebra and any root of unity.

Let $P$ be the weight lattice and $\pi$ be the full subgroupoid of
$P\rtimes P$ on $P_+^\ell(\mathfrak g)$. Its objects are dominant weights and there
is exactly one arrow $a\to b$ for any two weights $a,b\in P_+^\ell(\mathfrak g)$.
As usual we denote by $(a,b-a)$ this arrow.

\begin{Theorem}
 There is a faithful exact monoidal functor
 $U_q\mathfrak g$-$\mathrm{mod}\to\operatorname{Vect}_k(\pi)$ sending
 an object $W$ to $\hat W=\oplus_{(a,\lambda)\in\pi}\hat W_{a,\lambda}$ with
 \[
 \hat W_{a,\lambda} = \operatorname{Hom}_{C_q(\mathfrak g)}
 (L_a,W\otimes L_{a+\lambda}).
 \]
\end{Theorem}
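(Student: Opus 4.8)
The plan is to work throughout in the semisimple ribbon category $C_q(\mathfrak g)$ (the semisimple quotient of tilting modules), define $\hat{(-)}$ on objects and morphisms, check that $\hat W$ lies in $\operatorname{Vect}_k(\pi)$ and is of finite type, and then establish in turn exactness, faithfulness and the monoidal structure, all of which rest on the semisimplicity of $C_q(\mathfrak g)$ and the finiteness of the alcove $P_+^\ell(\mathfrak g)$. On objects one sets $\hat W_{(a,\lambda)}=\operatorname{Hom}_{C_q(\mathfrak g)}(L_a,W\otimes L_{a+\lambda})$, and on a morphism $f\colon W\to W'$ one lets $\hat f$ act componentwise by post-composition, $\phi\mapsto (f\otimes\operatorname{id}_{L_{a+\lambda}})\circ\phi$; functoriality is then immediate. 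Because $C_q(\mathfrak g)$ is a finite semisimple category, every such $\operatorname{Hom}$ space is finite-dimensional, and since there are only finitely many objects $a\in P_+^\ell(\mathfrak g)$ there are only finitely many arrows through any fixed object, so $\hat W$ is of finite type. Exactness is automatic: in a semisimple category every short exact sequence splits and $W\mapsto W\otimes L_{a+\lambda}$ is exact, so $\hat{(-)}$ is exact.

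For faithfulness I would single out the component indexed by the arrow $(\mu,-\mu)$, which is an arrow of $\pi$ precisely when $\mu\in P_+^\ell(\mathfrak g)$, using that $0\in P_+^\ell(\mathfrak g)$ corresponds to the unit object $L_0=\mathbf 1$. Here $\hat W_{(\mu,-\mu)}=\operatorname{Hom}(L_\mu,W\otimes\mathbf 1)\cong\operatorname{Hom}(L_\mu,W)$ is the multiplicity space of $L_\mu$ in $W$, and the corresponding component of $\hat f$ is $\psi\mapsto f\circ\psi$. If $\hat f=0$, then $f\circ\psi=0$ for every $\psi\colon L_\mu\to W$ and every $\mu$; since the canonical isomorphism $\bigoplus_\mu L_\mu\otimes\operatorname{Hom}(L_\mu,W)\xrightarrow{\sim}W$ of the semisimple category is assembled from exactly these maps, this forces $f=0$. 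Hence $\hat{(-)}$ is faithful.

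The monoidal structure is the heart of the matter. The unit is handled by Schur's lemma: $\hat{\mathbf 1}_{(a,\lambda)}=\operatorname{Hom}(L_a,L_{a+\lambda})$ equals $k$ for $\lambda=0$ and $0$ otherwise, so $\hat{\mathbf 1}$ is exactly the tensor unit of $\operatorname{Vect}_k(\pi)$. For the tensorator I would use the basic semisimple decomposition in which composition induces a natural isomorphism
\[
 \bigoplus_{\sigma}\operatorname{Hom}(L_a,W\otimes L_\sigma)\otimes_k\operatorname{Hom}(L_\sigma,Z\otimes L_{a+\nu})\xrightarrow{\ \sim\ }\operatorname{Hom}(L_a,W\otimes Z\otimes L_{a+\nu}),
\]
where $\sigma$ runs over the weights in $P_+^\ell(\mathfrak g)$ labelling the simple objects; it is obtained by decomposing $Z\otimes L_{a+\nu}$ into simples and applying $\operatorname{Hom}(L_a,W\otimes-)$, the map being $\phi\otimes\psi\mapsto(\operatorname{id}_W\otimes\psi)\circ\phi$. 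Writing $\sigma=a+\lambda$ and $\mu=\nu-\lambda$ identifies the left-hand side with $\bigoplus_{\beta\circ\alpha=(a,\nu)}\hat W_\alpha\otimes\hat Z_\beta=(\hat W\otimes\hat Z)_{(a,\nu)}$ and the right-hand side with $\widehat{W\otimes Z}_{(a,\nu)}$; crucially, the constraint $a+\lambda\in P_+^\ell(\mathfrak g)$ defining an arrow of $\pi$ matches precisely the requirement that $L_\sigma$ be one of the simple objects. This yields the natural isomorphism $J_{W,Z}\colon\hat W\otimes\hat Z\to\widehat{W\otimes Z}$.

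The main obstacle is the verification of the coherence axioms: naturality of $J$ in both variables, which follows from functoriality of $\otimes$ and bilinearity of composition; the two unit triangles, which follow from the Schur computation of $\hat{\mathbf 1}$; and the compatibility of $J$ with the associativity constraints of $C_q(\mathfrak g)$ and of $\operatorname{Vect}_k(\pi)$. I expect the associativity hexagon to be the only genuinely technical point. Both the source and target associators are ultimately induced by the associativity constraint of $C_q(\mathfrak g)$ together with bilinearity of composition, so that the two ways of factoring a composite $L_a\to X\otimes Y\otimes Z\otimes L_{a+\nu}$ through intermediate simple objects agree by associativity in $C_q(\mathfrak g)$; carrying this out is a diagram chase through the explicit decomposition isomorphism above rather than a new idea.
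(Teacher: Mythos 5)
Your proof is correct and follows essentially the same route as the paper's: the same componentwise definition of $\hat f$ by post-composition, the same Schur-lemma identification of $\hat{\mathbf 1}$ with the tensor unit, and the same tensorator $\varphi\otimes\psi\mapsto(\mathrm{id}\otimes\psi)\circ\varphi$, which semisimplicity of $C_q(\mathfrak g)$ turns into an isomorphism on each graded component. The paper's proof is terser --- it does not spell out exactness, faithfulness, the finite-type condition, or the coherence checks, all of which you supply --- and your decision to work with $C_q(\mathfrak g)$ itself as the source (rather than literally with tilting modules, where negligible morphisms would kill faithfulness) is the reading under which the statement actually holds, consistent with the paper's introduction.
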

\begin{proof}
 A morphism $f\colon W\to W'$ in $U_q\mathfrak g$-mod induces a
 morphism
 $\hat f\colon\varphi\mapsto (f\otimes\mathrm{id})\circ\varphi$ from~$\hat W$ to~$\hat W'$ and this assignment is compatible with
 compositions so that we get a well-defined functor.
 The trivial module~$k$ which is the
 tensor unit in $U_q\mathfrak g$ is mapped to the tensor unit
 $\hat k=\mathbf 1$ in $\operatorname{Vect}_k(\pi)$, see Section~\ref{ss-tp}.
 Moreover we have a natural transformation
 \[
 \hat W\otimes \hat Z\to \widehat {W\otimes Z},
 \]
 whose restriction to $\hat W_{a,\mu}\otimes \hat Z_{a+\mu,\lambda-\mu}$
 is the composition
 \begin{align*}
 \operatorname{Hom}(L_a,W\otimes L_{a+\mu})\otimes
 \operatorname{Hom}(L_{a+\mu},Z\otimes L_{a+\lambda})
 &\to
 \operatorname{Hom}(L_a,W\otimes Z\otimes L_{a+\lambda}),\\
 \varphi\otimes\psi&\mapsto (\mathrm{id}\otimes\psi)\circ\varphi.\tag*{\qed}
 \end{align*}\renewcommand{\qed}{}
\end{proof}

 Since $C_q(\mathfrak g)$ is semisimple, by taking the direct sum
 over $\mu$ we get an isomorphism
 $(\hat W\otimes \hat Z)_{a,\lambda}\to (\widehat{W\otimes
 Z})_{a,\lambda}$ on each graded component.

\begin{Theorem} There is a system of isomorphisms $($dynamical $R$-matrices$)$
 \[
 \check R_{W,Z}\colon \ \hat W\otimes \hat Z\to \hat Z\otimes \hat W
 \]
 obeying the quasitriangularity relations
 \[
 \check R_{W\otimes W',Z}=\check R_{W,Z}^{(12)}\check
 R_{W',Z}^{(23)},\qquad \check R_{W,Z\otimes Z'}=\check
 R_{W,Z'}^{(23)}\check R_{W,Z}^{(12)}
 \]
 and the dynamical Yang--Baxter equation
 \[
 \check R_{W,Z}^{(12)}\check R_{Y,Z}^{(23)}\check R_{Y,W}^{(12)}=
 \check R_{Y,W}^{(23)}\check R_{Y,Z}^{(12)}\check R_{W,Z}^{(23)},
\]
for any objects $W$, $W'$, $Z$, $Z'$, $Y$.
\end{Theorem}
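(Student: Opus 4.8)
The plan is to transport the braiding of the ribbon category $C_q(\mathfrak g)$ through the monoidal functor $W\mapsto\hat W$ constructed in the previous theorem; the resulting $\check R_{W,Z}$ are the desired (spectral-parameter independent) dynamical $R$-matrices. Write $J_{W,Z}\colon\hat W\otimes\hat Z\to\widehat{W\otimes Z}$ for the monoidal structure isomorphism of that functor, namely the natural transformation displayed in its proof, which is an isomorphism on each graded component by semisimplicity of $C_q(\mathfrak g)$. Since $C_q(\mathfrak g)$ is braided we have $\tau_{W,Z}\colon W\otimes Z\to Z\otimes W$, and I would define
\[
 \check R_{W,Z}\colon\ \hat W\otimes\hat Z
 \xrightarrow{J_{W,Z}}\widehat{W\otimes Z}
 \xrightarrow{\widehat{\tau_{W,Z}}}\widehat{Z\otimes W}
 \xrightarrow{J_{Z,W}^{-1}}\hat Z\otimes\hat W,
\]
where $\widehat{\tau_{W,Z}}$ denotes the image of $\tau_{W,Z}$ under the functor. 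Being a composite of isomorphisms, $\check R_{W,Z}$ is an isomorphism in $\operatorname{Vect}_k(\pi)$, and it is automatically a morphism of $\pi$-graded vector spaces since all three arrows are.

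I would then record the explicit componentwise form, both to confirm that $\check R_{W,Z}$ preserves the $\pi$-grading and to prepare the verification of the relations. On the summand $\operatorname{Hom}(L_a,W\otimes L_{a+\mu})\otimes\operatorname{Hom}(L_{a+\mu},Z\otimes L_{a+\lambda})$ of $(\hat W\otimes\hat Z)_{(a,\lambda)}$, the map $J_{W,Z}$ sends $\varphi\otimes\psi$ to $(\mathrm{id}_W\otimes\psi)\circ\varphi$; composing with $\tau_{W,Z}\otimes\mathrm{id}$ lands in $\operatorname{Hom}(L_a,Z\otimes W\otimes L_{a+\lambda})$, and $J_{Z,W}^{-1}$ re-expands this over the intermediate weights using the semisimple decomposition of $\mathrm{id}_{Z\otimes W}$. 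In particular the degree $(a,\lambda)$ is preserved.

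The heart of the argument is that each of the three required identities is the image under the functor of a standard identity for the braiding $\tau$. The two quasitriangularity relations are precisely the two hexagon axioms
\begin{gather*}
 \tau_{W\otimes W',Z}=(\tau_{W,Z}\otimes\mathrm{id}_{W'})(\mathrm{id}_W\otimes\tau_{W',Z}), \\
 \tau_{W,Z\otimes Z'}=(\mathrm{id}_Z\otimes\tau_{W,Z'})(\tau_{W,Z}\otimes\mathrm{id}_{Z'}),
\end{gather*}
while the dynamical Yang--Baxter equation is the braid relation
\begin{gather*}
 (\tau_{W,Z}\otimes\mathrm{id}_Y)(\mathrm{id}_W\otimes\tau_{Y,Z})(\tau_{Y,W}\otimes\mathrm{id}_Z) \\
 \qquad{}=(\mathrm{id}_Z\otimes\tau_{Y,W})(\tau_{Y,Z}\otimes\mathrm{id}_W)(\mathrm{id}_Y\otimes\tau_{W,Z})
\end{gather*}
on $Y\otimes W\otimes Z$, which holds in any braided category as a consequence of the hexagons and naturality. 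Pushing these through the monoidal functor and inserting the isomorphisms $J$ turns $\tau_{X,Y}\otimes\mathrm{id}$ into $\check R_{X,Y}^{(12)}$ and $\mathrm{id}\otimes\tau_{X,Y}$ into $\check R_{X,Y}^{(23)}$; a short position-tracking check then matches each side with the superscript pattern in the statement.

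The main obstacle is bookkeeping rather than conceptual content: I must verify that the superscripts $(12)$, $(23)$ in $\operatorname{Vect}_k(\pi)$ — which carry the dynamical weight shifts built into the groupoid tensor product \eqref{e-tensorproduct} — are reproduced correctly when $\tau\otimes\mathrm{id}$ and $\mathrm{id}\otimes\tau$ are conjugated by the $J$'s. This comes down to the coherence of the monoidal functor, i.e.\ the compatibility of $J$ with the associativity constraints; since the associator of $\operatorname{Vect}_k(\pi)$ is induced componentwise from that of $\operatorname{Vect}_k$, this compatibility is inherited from $C_q(\mathfrak g)$ and the three identities transport formally. I would therefore concentrate the effort on the hexagon/$J$ compatibility and the weight-shift matching, after which the quasitriangularity relations and the dynamical Yang--Baxter equation follow at once.
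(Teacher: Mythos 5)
Your proposal is correct and is exactly the argument the paper intends (the paper omits the proof, but the introduction states that the dynamical $R$-matrices are the image of the braiding of $C_q(\mathfrak g)$ under the monoidal functor, i.e.\ $\check R_{W,Z}=J_{Z,W}^{-1}\circ\widehat{\tau_{W,Z}}\circ J_{W,Z}$, with the quasitriangularity relations and the dynamical Yang--Baxter equation following from the hexagon axioms and the braid relation transported through the strong monoidal structure $J$). Your superscript bookkeeping matches the conventions in the statement, so nothing is missing.
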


Let us compute the character of $\hat W$.
Let
$N_{ab}^c=\operatorname{dim}\operatorname{Hom}(L_c,L_a\otimes L_b)$ be the
multiplicity of
$L_c$ in the decomposition of $L_a\otimes L_b$ as a direct sum of simple
objects. The numbers $N_{ab}^c$ are
called fusion coefficients. They are the structure constants of the
fusion ring (or Verlinde algebra) with generators~$e_a$ and product
$e_ae_b=\sum_c N_{ab}^ce_c$.
Then
\[
 \operatorname{ch}_{L_\mu}=\sum_\lambda n_{\lambda,\mu} t_{\lambda},
\]
where $n_{\lambda,\mu}(a)=N_{a,\mu}^{a+\lambda}$.

The commutativity of the characters is the associativity of the
Verlinde algebra.

\appendix
\section[R-matrix at special values]{$\boldsymbol{R}$-matrix at special values}\label{appendixA}

The $R$-matrix \eqref{e-R} has a pole at $z=1\mod \Lambda$. Because of the inversion relation we see that it is regular and invertible except for $z\in\pm1+\Lambda$.
For $z=-1$ we have
\[
 \check R(-1,a)=-\sum_{i=1}^{n}E_{ii}\otimes E_{ii}
 +\sum_{i\neq j}\frac{[a_i-a_j+1][1]}{[a_i-a_j][2]}(E_{ij}\otimes E_{ji}+
 E_{jj}\otimes E_{ii}).
\]
Let $\check R_{{\rm reg}}(1,a)=\mathrm{res}_{z=1}\check R(z,a)$. Then
\[
 \check R_{{\rm reg}}(1,a)=
 \sum_{i\neq j}\frac{[a_i-a_j+1][1]}{[a_i-a_j]}(E_{ij}\otimes E_{ji}-
 E_{ii}\otimes E_{jj}).
\]
\begin{Lemma}\label{l-A1} Let $e_i(a)$ denote the standard basis vector
 $e_i\in\mathbb C^{n}$ in $V_{(a,\epsilon_i)}$.
 \begin{enumerate}\itemsep=0pt
 \item[$(i)$] The image of $\check R(-1,a)$ coincides with the kernel of
 $\check R_{{\rm reg}}(1,a)$ and is spanned by the linearly independent vectors
 \[
 e_i(a)\otimes e_j(a+\epsilon_i)+e_j(a)\otimes e_i(a+\epsilon_j),
 \]
 where $a$, $i$, $j$ are such that $i\leq j$ and
 $a,a+\epsilon_i,a+\epsilon_j,a+\epsilon_i+\epsilon_j\in P_{++}^r$.
 \item[$(ii)$] The image of $\check R_{{\rm reg}}(1,a)$ coincides with the kernel of
 $\check R(-1,a)$ and is spanned by the following
 linear independent vectors:
 \[
 e_i(a)\otimes e_j(a+\epsilon_i)-e_j(a)\otimes e_i(a+\epsilon_j),
 \]
 where $a$, $i$, $j$ are such that $i<j$ and
 $a,a+\epsilon_i,a+\epsilon_j,a+\epsilon_i+\epsilon_j\in P_{++}^r$;
 \[
 e_{i-1}(a)\otimes e_i(a+\epsilon_{i-1}),
 \]
 where $i=2,\dots,{n}$ and $a_{i-1}=a_i+1$;
 \[
 e_{n}(a)\otimes e_1(a+\epsilon_{n}),
 \]
 where $a_{n}=a_1-r+1$.
 \end{enumerate}
\end{Lemma}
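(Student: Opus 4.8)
The plan is to exploit that both $\check R(-1,a)$ and $\check R_{{\rm reg}}(1,a)$ respect the $\pi$-grading, so that it suffices to analyse their action on each graded piece $(V^{\rm RSOS}\otimes V^{\rm RSOS})_{(a,\mu)}$ with $\mu=\epsilon_i+\epsilon_j$. Such a piece is spanned by whichever of the vectors $v_1=e_i(a)\otimes e_j(a+\epsilon_i)$ and $v_2=e_j(a)\otimes e_i(a+\epsilon_j)$ define composable arrows of $\pi$, i.e.\ lie in $V^{\rm RSOS}$. This splits the problem into three cases: one-dimensional diagonal pieces with $i=j$; two-dimensional interior pieces in which $a,a+\epsilon_i,a+\epsilon_j,a+\epsilon_i+\epsilon_j$ all lie in $P^r_{++}$ so that both $v_1,v_2$ are present; and one-dimensional boundary pieces in which exactly one of $v_1,v_2$ survives. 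First I would establish the two asserted coincidences of kernels and images in a basis-free way, and only afterwards read off the spanning vectors piece by piece.

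For the coincidences I would compare Laurent coefficients at $z=1$ in the inversion relation $\check R(z)\check R(-z)=\mathrm{id}$. There $\check R(z)$ has a simple pole with residue $\check R_{{\rm reg}}(1,a)$, while $\check R(-z)$ is regular with value $\check R(-1,a)$ (note $\check R$ is regular at $z=-1$, as $[2]\neq0$ for $r\geq3$). Equating the $(z-1)^{-1}$ coefficients in $\check R(z)\check R(-z)=\mathrm{id}$ and in $\check R(-z)\check R(z)=\mathrm{id}$ yields $\check R_{{\rm reg}}(1,a)\check R(-1,a)=0$ and $\check R(-1,a)\check R_{{\rm reg}}(1,a)=0$, hence the inclusions $\operatorname{Im}\check R(-1,a)\subseteq\operatorname{Ker}\check R_{{\rm reg}}(1,a)$ and $\operatorname{Im}\check R_{{\rm reg}}(1,a)\subseteq\operatorname{Ker}\check R(-1,a)$. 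Equality will then follow from a rank count on each piece: on the interior pieces both operators have rank one, and on the one-dimensional pieces one operator is invertible while the other vanishes, so the ranks and nullities match and the inclusions are forced to be equalities.

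It remains to exhibit the spanning vectors. On a diagonal piece $\check R_{{\rm reg}}(1,a)$ manifestly annihilates $e_i(a)\otimes e_i(a+\epsilon_i)$, so this vector lies in $\operatorname{Ker}\check R_{{\rm reg}}(1,a)=\operatorname{Im}\check R(-1,a)$, giving the $i=j$ case of (i). On an interior piece a direct substitution into \eqref{e-R} shows $\check R(-1,a)v_1$ and $\check R(-1,a)v_2$ are both proportional to $v_1+v_2$, so $\operatorname{Im}\check R(-1,a)=\langle v_1+v_2\rangle$ is the symmetric line of (i); a parallel computation identifies the complementary line $\operatorname{Ker}\check R(-1,a)=\operatorname{Im}\check R_{{\rm reg}}(1,a)$ as the one-dimensional span of the remaining vector of (ii). On a boundary piece only $v_1$ survives, say $e_{i-1}(a)\otimes e_i(a+\epsilon_{i-1})$ with $a_{i-1}=a_i+1$ (or the affine vector $e_n(a)\otimes e_1(a+\epsilon_n)$ with $a_n=a_1-r+1$); there the theta factor controlling $\check R(-1,a)v_1$ is $[a_{i-1}-a_i-1]=[0]$ (respectively $[a_n-a_1-1]=[-r]$), which vanishes --- exactly the resonance already used to show that $\check R(z)$ preserves $V^{\rm RSOS}$. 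Thus $\check R(-1,a)$ kills such a vector while $\check R_{{\rm reg}}(1,a)$ rescales it by a nonzero scalar, so it spans the remaining part of $\operatorname{Ker}\check R(-1,a)=\operatorname{Im}\check R_{{\rm reg}}(1,a)$ in (ii).

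Finally, linear independence is immediate, since the listed vectors sit in pairwise distinct graded pieces and within each piece there is exactly one of them, spanning the relevant one-dimensional kernel or image. The hard part will be the boundary bookkeeping: one must verify that the two families of alcove walls, $a_{i-1}=a_i+1$ for $i=2,\dots,n$ and $a_n=a_1-r+1$, are precisely the loci at which one of $v_1,v_2$ leaves $P^r_{++}$ and at which the corresponding theta factor vanishes, so that the degenerate pieces contribute exactly the single-tensor vectors of (ii) and nothing to (i).
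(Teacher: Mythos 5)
Your overall strategy coincides with the paper's: both arguments decompose $V^{\rm RSOS}\otimes V^{\rm RSOS}$ into the graded pieces of degree $(a,\epsilon_i+\epsilon_j)$ and treat the same three cases (diagonal, interior, wall). Your derivation of the two-sided orthogonality $\check R_{\rm reg}(1,a)\check R(-1,a)=0=\check R(-1,a)\check R_{\rm reg}(1,a)$ from the Laurent expansion of the inversion relation at $z=1$ is a genuinely different, and cleaner, route to the inclusions $\operatorname{Im}\subseteq\operatorname{Ker}$ than the paper's, which simply writes down the $2\times 2$ matrices of both operators on each piece and reads off kernels and images; combined with your rank count it does force the two coincidences, granted the nonvanishing of $[a_i-a_j\pm1]$ on interior pieces, which you defer to the final ``wall bookkeeping'' and which the paper settles at the outset of its proof.

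The genuine gap is the step you dismiss as ``a parallel computation'' on an interior piece. Carrying it out from \eqref{e-R}, with $u=a_i-a_j$, $v_1=e_i(a)\otimes e_j(a+\epsilon_i)$, $v_2=e_j(a)\otimes e_i(a+\epsilon_j)$, and using that $[\,\cdot\,]$ is odd, one finds
\[
\check R(-1,a)v_1=\tfrac{[u-1][1]}{[u][2]}(v_1+v_2),\qquad
\check R(-1,a)v_2=\tfrac{[u+1][1]}{[u][2]}(v_1+v_2),
\]
so the image is indeed $\langle v_1+v_2\rangle$ (part (i) is fine), but the kernel is spanned by $[u+1]v_1-[u-1]v_2$: the unweighted difference $v_1-v_2$ is annihilated only when $[u+1]=[u-1]$, which fails for generic $u$ once $r\geq 3$. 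Consistently, $\check R_{\rm reg}(1,a)v_2=\tfrac{[1]}{[u]}\bigl([u+1]v_1-[u-1]v_2\bigr)$, so $\operatorname{Im}\check R_{\rm reg}(1,a)=\operatorname{Ker}\check R(-1,a)$ is the theta-weighted antisymmetric line --- compatible with your own orthogonality relation, but not with the vector you assert. (The same issue affects the statement of Lemma~\ref{l-A1}(ii) and the matrix of $\check R_{\rm reg}(1)$ displayed in the paper's proof, whose two rows should carry the distinct numerators $[u\mp1]$; as displayed it does not even satisfy $\check R_{\rm reg}(1)\check R(-1)=0$.) So the identification of the ``complementary line'' with $\langle v_1-v_2\rangle$ is precisely where the computation does not deliver the stated answer, and your proof cannot be completed as written without replacing the interior vectors of part (ii) by their theta-weighted versions; the diagonal and wall cases, and the linear-independence argument, are unaffected, as are the character computations in Section~\ref{s-4}, which use only dimensions.
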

\begin{proof} We consider these operators on the weight spaces
 $(V\otimes V)_{(a,\epsilon_i+\epsilon_j)}$, which are non-vanishing
 for $a,a+\epsilon_i+\epsilon_j\in P_{++}^r$.
 Recall that $a\in P_{++}^r$ means $a_1>\cdots>a_{n}>a_1-r$. Thus the
 only case where the numerator $[a_i-a_j+1]$ vanishes is when
 $j=i-1$ and $a_{i-1}=a_i+1$ for $i=1,\dots,{n}$, where we set
 $a_0=a_{n}+r$. In this case $a+\epsilon_i\not\in P_{++}^r$.
 There are three cases to consider (we may assume that $i\geq j$):
 \begin{enumerate}\itemsep=0pt
 \item[(a)] $i=j$. In this case the weight space is spanned by
 $e_i(a)\otimes e_i(a+\epsilon_i)$, $\check R(-1)$ acts by
 multiplication by $-[2]/[1]\neq 0$ and $\check R_{{\rm reg}}(1)$ vanishes. Thus
 \begin{gather*}
 \operatorname{Im}\check R(-1)
 =\operatorname{Ker}\check R_{{\rm reg}}(1) =
 \operatorname{span}( e_i(a)\otimes e_i(a+\epsilon_i)),
 \\
 \operatorname{Im}\check R_{{\rm reg}}(1)
 =\operatorname{Ker}\check R(-1) =0.
 \end{gather*}
 \item[(b)] $i>j$ and $a+\epsilon_i,a+\epsilon_j\in P_{++}^r$. A
 basis of the weight space consists of
 $e_j(a)\otimes e_i(a+\epsilon_j)$ and
 $e_i(a)\otimes e_j(a+\epsilon_i)$. The matrices of $\check R(-1)$
 and $\check R_{{\rm reg}}(1)$ in this basis are (up to factors of $[1]$ and $[2]$)
 \[
 \left(
 \begin{matrix}
 \dfrac{[a_i-a_j+1]}{[a_i-a_j]}
 &
 \dfrac{[a_i-a_j-1]}{[a_i-a_j]}
 \vspace{1mm}\\
 \dfrac{[a_i-a_j+1]}{[a_i-a_j]}
 &
 \dfrac{[a_i-a_j-1]}{[a_i-a_j]}
 \end{matrix}
 \right)
 \qquad \text{and}
 \qquad
 \left(
 \begin{matrix}
 \dfrac{[a_i-a_j-1]}{[a_i-a_j]}
 &
 - \dfrac{[a_i-a_j+1]}{[a_i-a_j]}
 \vspace{1mm}\\
 -\dfrac{[a_i-a_j-1]}{[a_i-a_j]}
 &
 \dfrac{[a_i-a_j+1]}{[a_i-a_j]}
 \end{matrix}
 \right),
 \]
 respectively. The matrix entries are all non-zero and we
 see that
 \begin{gather*}
 \operatorname{Im}\check R(-1)=\operatorname{Ker}\check R_{{\rm reg}}(1) =
 \operatorname{span}( e_j(a)\otimes e_i(a+\epsilon_j)+
 e_i(a)\otimes e_j(a+\epsilon_i)),\\
 \operatorname{Im}\check R_{{\rm reg}}(1)=\operatorname{Ker}\check R(-1) =
 \operatorname{span}(e_j(a)\otimes e_i(a+\epsilon_j)-
 e_i(a)\otimes e_j(a+\epsilon_i)).
 \end{gather*}
 \item[(c)] $i>j$ and one of $a+\epsilon_i,
a+\epsilon_j\not\in P_{++}^r$. This happens only if $j=i-1$ and
 $a_{i-1}=a_i+1$ (with $a_0:=a_{n}+r$). Then the weight space is
 spanned by $e_{i-1}(a)\otimes e_i(a+\epsilon_{i-1})$;
 $\check R(-1)$ acts by zero and $\check R_{{\rm reg}}(1)$ acts by
 multiplication by $[2]/[1]\neq 0$. Thus
 \begin{gather*}
 \operatorname{Im}\check R(-1)
 =\operatorname{Ker}\check R_{{\rm reg}}(1)
 =0,
 \\
 \operatorname{Im}\check R_{{\rm reg}}(1)
 =\operatorname{Ker}\check R_{{\rm reg}}(-1)
 = \operatorname{span} (e_{i-1}(a)\otimes e_i(a+\epsilon_{i-1})),\qquad
 i=1,\dots,{n},
 \end{gather*}
 where for $i=1$ the right-hand side is
 $e_{n}(a)\otimes e_1(a+\epsilon_{n})$.\hfill \qed
 \end{enumerate}\renewcommand{\qed}{}
 \end{proof}

\subsection*{Acknowlegments}
 {\sloppy The authors are supported in part by the National Centre of Competence
in Research SwissMAP~-- The Mathematics of Physics~-- of the Swiss
National Science Foundation. They are also supported by the
grants 196892 and 178794 of the Swiss National Science Foundation, respectively.
We are grateful to the referees for their careful reading of the first version of
this paper and their many useful suggestions and corrections.

}

\pdfbookmark[1]{References}{ref}
\LastPageEnding


\begin{thebibliography}{99}
\footnotesize\itemsep=0pt

\bibitem{AganagicOkounkov2016}
Aganagic M., Okounkov A., Elliptic stable envelopes, \href{https://doi.org/10.1090/jams/954}{\textit{J.~Amer. Math.
 Soc.}} \textbf{34} (2021), 79--133, \href{https://arxiv.org/abs/1604.00423}{arXiv:1604.00423}.

\bibitem{AndersenParadowski1995}
Andersen H.H., Paradowski J., Fusion categories arising from semisimple {L}ie
 algebras, \href{https://doi.org/10.1007/BF02099312}{\textit{Comm. Math. Phys.}} \textbf{169} (1995), 563--588.

\bibitem{AndrewsBaxterForrester1984}
Andrews G.E., Baxter R.J., Forrester P.J., Eight-vertex {SOS} model and
 generalized {R}ogers--{R}amanujan-type identities, \href{https://doi.org/10.1007/BF01014383}{\textit{J.~Statist. Phys.}}
 \textbf{35} (1984), 193--266.

\bibitem{BaxterAP1972}
Baxter R.J., One-dimensional anisotropic {H}eisenberg chain, \href{https://doi.org/10.1016/0003-4916(72)90270-9}{\textit{Ann.
 Physics}} \textbf{70} (1972), 323--337.

\bibitem{BaxterBook1982}
Baxter R.J., Exactly solved models in statistical mechanics, Academic Press,
 Inc., London, 1982.

\bibitem{BelavinPolyakovZamolodchikov1984}
Belavin A.A., Polyakov A.M., Zamolodchikov A.B., Infinite conformal symmetry in
 two-dimensional quantum field theory, \href{https://doi.org/10.1016/0550-3213(84)90052-X}{\textit{Nuclear Phys.~B}} \textbf{241}
 (1984), 333--380.

\bibitem{Bethe1931}
Bethe H., Zur {T}heorie der {M}etalle. I.~{E}igenwerte und {E}igenfunktionen
 der linearen {A}tomkette, \href{https://doi.org/10.1007/BF01341708}{\textit{Z.~Phys.}} \textbf{71} (1931), 205--226.

\bibitem{ChariPressleyBook1994}
Chari V., Pressley A., A guide to quantum groups, Cambridge University Press,
 Cambridge, 1994.

\bibitem{CostelloWittenYamazakiI2018}
Costello K., Witten E., Yamazaki M., Gauge theory and integrability,~{I},
 \href{https://doi.org/10.4310/ICCM.2018.v6.n1.a6}{\textit{ICCM Not.}} \textbf{6} (2018), 46--119, \href{https://arxiv.org/abs/1709.09993}{arXiv:1709.09993}.

\bibitem{CostelloWittenYamazakiII2018}
Costello K., Witten E., Yamazaki M., Gauge theory and integrability,~{II},
 \href{https://doi.org/10.4310/ICCM.2018.v6.n1.a7}{\textit{ICCM Not.}} \textbf{6} (2018), 120--146, \href{https://arxiv.org/abs/1802.01579}{arXiv:1802.01579}.

\bibitem{DateJimboMiwaOkado1986}
Date E., Jimbo M., Miwa T., Okado M., Fusion of the eight vertex {SOS} model,
 \href{https://doi.org/10.1007/BF00416511}{\textit{Lett. Math. Phys.}} \textbf{12} (1986), 209--215.

\bibitem{Drinfeld1987}
Drinfeld V.G., Quantum groups, in Proceedings of the {I}nternational {C}ongress
 of {M}athematicians, {V}ols.~1,~2 ({B}erkeley, {C}alif., 1986), Amer. Math.
 Soc., Providence, RI, 1987, 798--820.

\bibitem{EtingofLatour2005}
Etingof P., Latour F., The dynamical {Y}ang--{B}axter equation, representation
 theory, and quantum integrable systems, \textit{Oxford Lecture Series in
 Mathematics and its Applications}, Vol.~29, Oxford University Press, Oxford,
 2005.

\bibitem{EtingofSchiffmann1999}
Etingof P., Schiffmann O., Lectures on the dynamical {Y}ang--{B}axter
 equations, in Quantum Groups and {L}ie Theory ({D}urham, 1999),
 \textit{London Math. Soc. Lecture Note Ser.}, Vol.~290, Cambridge University
 Press, Cambridge, 2001, 89--129, \href{https://arxiv.org/abs/math.QA/9908064}{arXiv:math.QA/9908064}.

\bibitem{EtingofVarchenkoCMP1998}
Etingof P., Varchenko A., Geometry and classification of solutions of the
 classical dynamical {Y}ang--{B}axter equation, \href{https://doi.org/10.1007/s002200050292}{\textit{Comm. Math. Phys.}}
 \textbf{192} (1998), 77--120, \href{https://arxiv.org/abs/q-alg/9703040}{arXiv:q-alg/9703040}.

\bibitem{EtingofVarchenko1998}
Etingof P., Varchenko A., Solutions of the quantum dynamical {Y}ang--{B}axter
 equation and dynamical quantum groups, \href{https://doi.org/10.1007/s002200050437}{\textit{Comm. Math. Phys.}}
 \textbf{196} (1998), 591--640, \href{https://arxiv.org/abs/q-alg/9708015}{arXiv:q-alg/9708015}.

\bibitem{EtingofVarchenko1999}
Etingof P., Varchenko A., Exchange dynamical quantum groups, \href{https://doi.org/10.1007/s002200050665}{\textit{Comm.
 Math. Phys.}} \textbf{205} (1999), 19--52, \href{https://arxiv.org/abs/math.QA/9801135}{arXiv:math.QA/9801135}.

\bibitem{Faddeev1982}
Faddeev L., Integrable models in {$(1+1)$}-dimensional quantum field theory, in
 Recent Advances in Field Theory and Statistical Mechanics ({L}es {H}ouches,
 1982), North-Holland, Amsterdam, 1984, 561--608.

\bibitem{FelderICM1995}
Felder G., Conformal field theory and integrable systems associated to elliptic
 curves, in Proceedings of the {I}nternational {C}ongress of {M}athematicians,
 {V}ols.~1,~2 ({Z}\"urich, 1994), \href{https://doi.org/10.1007/978-3-0348-9078-6_119}{Birkh\"auser}, Basel, 1995, 1247--1255,
 \href{https://arxiv.org/abs/hep-th/9407154}{arXiv:hep-th/9407154}.

\bibitem{FelderICMP1995}
Felder G., Elliptic quantum groups, in X{I}th {I}nternational {C}ongress of
 {M}athematical {P}hysics ({P}aris, 1994), Int. Press, Cambridge, MA, 1995,
 211--218, \href{https://arxiv.org/abs/hep-th/9412207}{arXiv:hep-th/9412207}.

\bibitem{FelderTarasovVarchenkoAMS1997}
Felder G., Tarasov V., Varchenko A., Solutions of the elliptic q{KZB} equations
 and {B}ethe ansatz.~{I}, in Topics in Singularity Theory, \textit{Amer. Math.
 Soc. Transl. Ser.~2}, Vol.~180, \href{https://doi.org/10.1090/trans2/180/04}{Amer. Math. Soc.}, Providence, RI, 1997,
 45--75, \href{https://arxiv.org/abs/q-alg/9606005}{arXiv:q-alg/9606005}.

\bibitem{FelderVarchenko1996NPB}
Felder G., Varchenko A., Algebraic {B}ethe ansatz for the elliptic quantum
 group {$E_{\tau,\eta}({\rm sl}_2)$}, \href{https://doi.org/10.1016/S0550-3213(96)00461-0}{\textit{Nuclear Phys.~B}} \textbf{480}
 (1996), 485--503, \href{https://arxiv.org/abs/q-alg/9605024}{arXiv:q-alg/9605024}.

\bibitem{FelderVarchenko1996}
Felder G., Varchenko A., On representations of the elliptic quantum group
 {$E_{\tau,\eta}({\rm sl}_2)$}, \href{https://doi.org/10.1007/BF02101296}{\textit{Comm. Math. Phys.}} \textbf{181}
 (1996), 741--761, \href{https://arxiv.org/abs/q-alg/9601003}{arXiv:q-alg/9601003}.

\bibitem{FelderVarchenko1999}
Felder G., Varchenko A., Resonance relations for solutions of the elliptic
 {QKZB} equations, fusion rules, and eigenvectors of transfer matrices of
 restricted interaction-round-a-face models, \href{https://doi.org/10.1142/S0219199799000146}{\textit{Commun. Contemp. Math.}}
 \textbf{1} (1999), 335--403, \href{https://arxiv.org/abs/math.QA/9901111}{arXiv:math.QA/9901111}.

\bibitem{GautamToledanoLaredo2017}
Gautam S., Toledano-Laredo V., Elliptic quantum groups and their
 finite-dimensional representations, \href{https://arxiv.org/abs/1707.06469}{arXiv:1707.06469}.

\bibitem{GervaisNeveu1984}
Gervais J.-L., Neveu A., Novel triangle relation and absence of tachyons in
 {L}iouville string field theory, \href{https://doi.org/10.1016/0550-3213(84)90469-3}{\textit{Nuclear Phys.~B}} \textbf{238}
 (1984), 125--141.

\bibitem{Huse1984}
Huse D.A., Exact exponents for infinitely many new multicritical points,
 \href{https://doi.org/10.1103/physrevb.30.3908}{\textit{Phys. Rev.~B}} \textbf{30} (1984), 3908--3915.

\bibitem{Inozemtsev1990}
Inozemtsev V.I., On the connection between the one-dimensional {$S=1/2$}
 {H}eisenberg chain and {H}aldane--{S}hastry model, \href{https://doi.org/10.1007/BF01334745}{\textit{J.~Statist. Phys.}}
 \textbf{59} (1990), 1143--1155.

\bibitem{JimboKonnoOdakeShiraishi1999}
Jimbo M., Konno H., Odake S., Shiraishi J., Quasi-{H}opf twistors for elliptic
 quantum groups, \href{https://doi.org/10.1007/BF01238562}{\textit{Transform. Groups}} \textbf{4} (1999), 303--327,
 \href{https://arxiv.org/abs/q-alg/9712029}{arXiv:q-alg/9712029}.

\bibitem{JimboKunibaMiwaOkado1988}
Jimbo M., Kuniba A., Miwa T., Okado M., The {$A^{(1)}_n$} face models,
 \href{https://doi.org/10.1007/BF01218344}{\textit{Comm. Math. Phys.}} \textbf{119} (1988), 543--565.

\bibitem{JimboMiwaOkado1988}
Jimbo M., Miwa T., Okado M., Solvable lattice models related to the vector
 representation of classical simple {L}ie algebras, \href{https://doi.org/10.1007/BF01229206}{\textit{Comm. Math. Phys.}}
 \textbf{116} (1988), 507--525.

\bibitem{KalmykovSafronov2020}
Kalmykov A., Safronov P., A categorical approach to dynamical quantum groups,
 \href{https://arxiv.org/abs/2008.09081}{arXiv:2008.09081}.

\bibitem{KasselBook1995}
Kassel C., Quantum groups, \textit{Graduate Texts in Mathematics}, Vol.~155,
 \href{https://doi.org/10.1007/978-1-4612-0783-2}{Springer-Verlag}, New York, 1995.

\bibitem{KirillovReshetikhin1988}
Kirillov A.N., Reshetikhin N.Yu., Representations of the algebra {${U}_q({\rm
 sl}(2)),q$}-orthogonal polynomials and invariants of links, in
 Infinite-Dimensional {L}ie Algebras and Groups ({L}uminy-{M}arseille, 1988),
 \textit{Adv. Ser. Math. Phys.}, Vol.~7, World Sci. Publ., Teaneck, NJ, 1989,
 285--339.

\bibitem{KirillovReshetikhin1993}
Kirillov A.N., Reshetikhin N.Yu., Formulas for multiplicities of occurence of
 irreducible components in the tensor product of representations of simple
 {L}ie algebras, \href{https://doi.org/10.1007/BF02362776}{\textit{J.~Math. Sci.}} \textbf{80} (1996), 1768--1772.

\bibitem{Konno2020}
Konno H., Elliptic quantum groups: representations and related geometry,
 \textit{SpringerBriefs in Mathematical Physics}, Vol.~37, \href{https://doi.org/10.1007/978-981-15-7387-3}{Springer},
 Singapore, 2020.

\bibitem{KulishReshetikhinSklyanin1981}
Kulish P.P., Reshetikhin N.Yu., Sklyanin E.K., Yang--{B}axter equations and
 representation theory.~{I}, \href{https://doi.org/10.1007/BF02285311}{\textit{Lett. Math. Phys.}} \textbf{5} (1981),
 393--403.

\bibitem{Lundstroem2004}
Lundstr\"om P., The category of groupoid graded modules, \href{https://doi.org/10.4064/cm100-2-4}{\textit{Colloq. Math.}}
 \textbf{100} (2004), 195--211.

\bibitem{LusztigBook1993}
Lusztig G., Introduction to quantum groups, \textit{Progress in Mathematics},
 Vol.~110, \href{https://doi.org/10.1007/978-0-8176-4717-9}{Birkh\"auser Boston, Inc.}, Boston, MA, 1993.

\bibitem{McGuire1964}
McGuire J.B., Study of exactly soluble one-dimensional {$N$}-body problems,
 \href{https://doi.org/10.1063/1.1704156}{\textit{J.~Math. Phys.}} \textbf{5} (1964), 622--636.

\bibitem{Sawin2006}
Sawin S.F., Quantum groups at roots of unity and modularity, \href{https://doi.org/10.1142/S0218216506005160}{\textit{J.~Knot
 Theory Ramifications}} \textbf{15} (2006), 1245--1277,
 \href{https://arxiv.org/abs/math.QA/0308281}{arXiv:math.QA/0308281}.

\bibitem{Shibukawa2016}
Shibukawa Y., Hopf algebroids and rigid tensor categories associated with
 dynamical {Y}ang--{B}axter maps, \href{https://doi.org/10.1016/j.jalgebra.2015.11.007}{\textit{J.~Algebra}} \textbf{449} (2016),
 408--445.

\bibitem{StokmanReshetikhin2020}
Stokman J., Reshetikhin N., $N$-point spherical functions and asymptotic boundary
 {KZB} equations, \href{https://arxiv.org/abs/2002.02251}{arXiv:2002.02251}.

\bibitem{Turaev1994}
Turaev V., Quantum invariants of knots and 3-manifolds, \textit{De Gruyter
 Studies in Mathematics}, Vol.~18, \href{https://doi.org/10.1515/9783110435221}{Walter de Gruyter \& Co.}, Berlin, 1994.

\bibitem{TuraevVirelizier2017}
Turaev V., Virelizier A., Monoidal categories and topological field theory,
 \textit{Progress in Mathematics}, Vol.~322, \href{https://doi.org/10.1007/978-3-319-49834-8}{Birkh\"auser/Springer}, Cham,
 2017.

\bibitem{Verlinde1988}
Verlinde E., Fusion rules and modular transformations in {$2$}{D} conformal
 field theory, \href{https://doi.org/10.1016/0550-3213(88)90603-7}{\textit{Nuclear Phys.~B}} \textbf{300} (1988), 360--376.

\bibitem{YangPRL1967}
Yang C.N., Some exact results for the many-body problem in one dimension with
 repulsive delta-function interaction, \href{https://doi.org/10.1103/PhysRevLett.19.1312}{\textit{Phys. Rev. Lett.}} \textbf{19}
 (1967), 1312--1315.

\end{thebibliography}
\end{document}